\theoremstyle{definition}
\newtheorem{lem}{Lemma}
\newtheorem{prop}[lem]{Proposition}
\newtheorem{thm}[lem]{Theorem}
\newtheorem{cor}[lem]{Corollary}
\numberwithin{lem}{section}
\DeclareMathOperator{\Ker}{Ker}
\DeclareMathOperator{\gl}{GL}
\DeclareMathOperator{\pgl}{PGL}
\DeclareMathOperator{\PGL}{PGL}
\DeclareMathOperator{\GL}{GL}
\DeclareMathOperator{\Sym}{Sym}
\newcommand{\C}{\mathbb{C}}
\newcommand{\F}{\mathbb{F}}
\newcommand{\R}{\mathbb{R}}
\newcommand{\Q}{\mathbb{Q}}
\newcommand{\Z}{\mathbb{Z}}
\newcommand{\N}{{\mathbb {N}}}
\newcommand{\A}{{\mathbb {A}}}
\newcommand{\disc}{{\rm Disc}}
\newcommand{\Disc}{{\rm Disc}}
\newcommand{\Sing}{{\rm Sing}}
\newcommand{\fx}{\mathfrak X}
\newcommand{\relmiddle}[1]{\mathrel{}\middle#1\mathrel{}}
\newcommand{\relmid}{\relmiddle{|}}
\newcommand{\Jac}{\mathrm{Jac}}
\newcommand{\Projsp}{\mathbb{P}}
\newcommand{\xa}{X_{1^22}^f}
\newcommand{\xb}{X_{2^2}^f}
\newcommand{\xc}{X_{1^21^2}^f}
\newcommand{\psia}{\psi_{1^22}}
\newcommand{\psib}{\psi_{2^2}}
\newcommand{\psic}{\psi_{1^21^2}}
\newcommand{\yicmtwo}[1]{{\leavevmode\textcolor[rgb]{0.65,0.1,0.85}{#1}}}
\renewcommand{\yicmtwo}[1]{{{#1}}}
\title{Exponential sums over singular binary quartic forms and applications}
\author{Yasuhiro Ishitsuka, Takashi Taniguchi, Frank Thorne, and Stanley Yao Xiao}
\begin{document}

\maketitle

\begin{abstract}
We investigate exponential sums over singular binary quartic forms, proving an explicit formula for the finite field Fourier transform of 
this set. Our formula shares much in common with analogous formulas proved previously for other vector spaces, but also exhibits a striking
new feature: the point counting function $a_p(E) = p + 1 - \#E(\F_p)$ associated to an associated elliptic curve makes a prominent appearance.
The proof techniques are also new, involving techniques
from elementary algebraic geometry and classical invariant theory.
As an application to prime number theory, we demonstrate the existence of
`many' 2-Selmer elements for elliptic curves with
discriminants that are squarefree and have at most four prime factors.
\end{abstract}

\section{Introduction}

Let $V$ be a finite-dimensional $\F_p$-vector space, and let $\Phi_p : V \rightarrow \C$ be any function. In many contexts
it is natural to ask for an equidistribution result on the values of $\Phi_p$, and we may quantify such equidistribution by proving bounds
on its Fourier transform
\[
\widehat{\Phi_p}(v) := p^{-\dim(V)} \sum_{w \in V(\F_p)} \Phi_p(w) e^{2\pi i[w,v] / p}
\qquad
(v\in V^\ast),\]
where $V^\ast$ is the dual space and $[\cdot,\cdot]\colon V\times V^\ast\rightarrow \F_p$ is the canonical pairing.

In many cases one can obtain not only upper bounds, but explicit formulas which illuminate the structure of $V$ and $\Phi_p$.
Here are several examples which have appeared in the literature:

\begin{itemize}
\item
Let $V$ be the space of binary cubic forms. Then, with $G = \textnormal{GL}(2)$, the pair $(G, V)$ is a {\itshape
prehomogeneous vector space}: over an algebraically closed field, the action of $G(k)$ on $V(k)$ has a Zariski open
orbit. There is a natural identification of $V^\ast$ with $V$
for ${\rm char}(k)\neq 3$.

Over $\F_p$, the action of $G(\F_p)$ has six orbits: three singular orbits consisting of $f \in V$ with $\Disc(f) = 0$,
and three nonsingular orbits. In \cite{mori}, Mori proved an explicit formula for $\widehat{\Phi_p}$ for any $G(\F_p)$-invariant
function $\Phi_p$. As a representative example, let $\Phi_p$ be the characteristic function of singular binary forms (i.e., those 
whose discriminants are zero in $\F_p$). Then, for $p \neq 3$, it is immediate from Mori's work that
\begin{equation}\label{eq:cubic_fourier}
\widehat{\Phi_p}(f)=
\begin{cases}
	p^{-1} + p^{-2} - p^{-3} & f=0,\\
	p^{-2} - p^{-3}		& \text{$f \neq 0$ and $\Disc(f) = 0 $},\\
	-p^{-3}		& \text{$\Disc(f) \neq 0$}.
\end{cases}
\end{equation}
We note that: (a) the Fourier transform is only $O(p^{-3})$ on average, since the last case is the generic one, thereby
obtaining better than square root cancellation
of $O(p^{-2.5})$ in $L_1$ norm;
(b) the formula is uniform in $p$; (c) the formula exhibits an
elegant `shape', with the largest Fourier transforms occurring on the most singular orbits.

Similar formulas were obtained in the prehomogeneous case by the second and third authors in \cite{TT_orbital} and by
Ishimoto \cite{ishimoto}.
With $\F_p$ replaced by $\Z/p^2$, additional such formulas 
were obtained in \cite{TT_L} and by Hough \cite{hough}.

\item Using more elaborate algebraic machinery, Fouvry and Katz \cite{FK} 
obtained {\itshape upper bounds} 
for related exponential sums in a much more
general context. As a special case, let $Y$ be a (locally closed) subscheme 
of $\A_{\Z}^n$, and for each prime $p$ let 
$\Phi_p$ be the characteristic function of $Y(\F_p)$.
Fouvry and Katz
produce a filtration of
subschemes $\A_\Z^n \supseteq X_1 \supseteq \cdots \supseteq X_j \supseteq \cdots \supseteq X_n$ of 
increasing codimension, so that successively weaker upper bounds hold on each
$(\A_\Z^n - X_j)(\F_p)$.

These more general results illustrate many of the same features, with an elegant shape, and with the subschemes
$X_i$ being defined over $\Z$ and serving for all primes $p$ simultaneously.

As an interesting application (\cite[Corollary 1.3]{FK}), they proved that a positive proportion of 
primes $p \equiv 1 \pmod 4$ are such that $p + 4$ is squarefree and not the discriminant of a cubic field.

\item 
Again in the prehomogeneous case, Denef and Gyoja \cite{DG} chose
 $\widehat{\Phi_p}$ to be the (non-$G(\F_p)$-invariant) function $\chi(\Disc(v))$, where $\chi$ is a nontrivial Dirichlet character
 $\pmod p$. 
Denef and Gyoja then proved
that the Fourier transform of $\Phi(v)$ is equal to
$\chi^{-1}(\Disc(v))$
times a factor
independent of $v$, somewhat
recalling the shape of Sato's fundamental
theorem \cite[Theorem 4.17]{kimura}\footnote{
The original reference for this theorem is
\cite{sato-ayumi} in Japanese.
We mention that part of this article is translated
into English \cite{sato}, but the fundamental theorem
is not included in there.}
 of prehomogeneous vector spaces over $\R$.

\end{itemize}
An irreducible representation of an algebraic group is called a {\itshape coregular} space if the ring of polynomial invariants is free.
These generalize the prehomogeneous vector spaces,
where the invariant ring has a single generator (the discriminant).
These,  like the prehomogeneous vector spaces, have been the subject
of spectacular parametrization and arithmetic density theorems,
a few of which we will discuss shortly.

In this paper we investigate an exponential sum associated to a coregular space and ask to what extent it enjoys
structure similar to \eqref{eq:cubic_fourier}. In particular, we now let 
$V$ the vector space of binary quartic forms. We say that $f\in V$ is
singular if the discriminant $\Disc(f)$ of $f$ is zero.
We prove the following:

\begin{thm}\label{thm:ff-disc}
Let $p > 3$ be a prime,  and let $\Phi_p\colon V\rightarrow\{0,1\}$
be the characteristic function of singular binary quartic forms.
Then, we have
	\[
		\widehat{\Phi_p}(f)=
		\begin{cases}
			p^{-1}+p^{-2}-p^{-3} & (f = 0) \\
			p^{-2}-p^{-3} & (\mbox{$f$ has splitting type $(1^4)$ or $(1^31)$}) \\
			\chi_{12}(p) (-p^{-3}+p^{-4}) & (\mbox{$f$ has splitting type $(1^21^2)$}) \\
			\chi_{12}(p) (p^{-3}+p^{-4}) & (\mbox{$f$ has splitting type $(2^2)$}) \\
			\chi_{12}(p) p^{-4} & (\mbox{$f$ has splitting type $(1^211)$ or $(1^22)$}) \\
			\left( \frac{-3I(f)}{p}\right) \cdot p^{-4} & (J(f) = 0, I(f) \neq 0) \\
			a(E_f')p^{-4} & (J(f) \neq 0, \Disc(f) \neq 0), \\
		\end{cases}
	\]
with the following notations and conventions:
\begin{itemize} 
\item
As with \eqref{eq:cubic_fourier},
we use a natural identification of $V^\ast$ with $V$;
see \eqref{eq:bilinear} for detail.
The definition of splitting type of $f\in V^\ast=V$
is recalled in Section \ref{sec:background}.
         \item The invariants $I(f)$ and $J(f)$
are naturally associated to a natural action of $\PGL_2$ on $V$
of degree $2$ and $3$,
         as we will recall in \eqref{eq_defI} and \eqref{eq_defJ}.
$\Disc(f)$ is related to these by $\Disc(f)=(4I(f)^3-J(f)^2)/27$.
	\item $E_f'$ in the last line is the elliptic curve over $\F_p$ defined by
	\begin{equation}\label{eq:def_evp}
		y^2 = x^3 - 3I(f)x^2 + J(f)^2.
	\end{equation}
	We as usual define $a(E_f') := p + 1 - \#E'_f(\F_p)$.
	\item $\left( \frac{\cdot}{p} \right)$ is the Legendre symbol,
and $\chi_{12}$ is the primitive Dirichlet character modulo 12,
namely $\chi_{12}(p)$ is $1$ or $-1$ according as $p\equiv\pm1\pmod{12}$ or $p\equiv\pm5\pmod{12}$.
\end{itemize}

\end{thm}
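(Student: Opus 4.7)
The plan is to reduce to orbit representatives and then compute exponential sums. Since $\Disc$ is a relative invariant under the $\GL_2$-action (with weight $(\det)^{12}$), the indicator $\Phi_p$ is $\GL_2(\F_p)$-invariant, and a change-of-variables in the Fourier sum shows that $\widehat{\Phi_p}$ is $\GL_2(\F_p)$-invariant on $V^\ast \cong V$. Thus $\widehat{\Phi_p}(f)$ depends only on the $\GL_2(\F_p)$-orbit of $f$, and the seven cases of the theorem correspond precisely to these orbits: splitting types for singular $f$, and for nonsingular $f$ a finer decomposition by $(I,J)$ together with the Legendre symbol of $-3I$ or the elliptic curve $E_f'$. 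Grouping, for example, $(1^4)$ with $(1^31)$ reflects that these orbits give equal Fourier values; such coincidences will follow from the computation.

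I would then linearize the indicator of $\Disc=0$,
\[
\widehat{\Phi_p}(f) \;=\; \frac{1}{p^6}\sum_{t \in \F_p}\sum_{g \in V(\F_p)} e^{2\pi i (t \Disc(g)+[g,f])/p},
\]
so that the $t=0$ term contributes the first line $p^{-1}\delta_{f=0}$. For $t \neq 0$, the homogeneities $\Disc(cg) = c^6 \Disc(g)$ and $[cg,f] = c[g,f]$ for $c \in \F_p^\ast$ imply that the inner sum is invariant under $(t,f) \mapsto (tc^6, cf)$. Since the sixth powers in $\F_p^\ast$ have index $\gcd(6, p-1) \in \{2,6\}$ depending on $p \bmod 12$, the sum over nonzero $t$ regroups into a weighted sum over sixth-power cosets; this is the mechanism by which the character $\chi_{12}$ of the statement enters, ultimately through a Gauss-sum identity for the quadratic/cubic character attached to the sixth-power residue symbol.

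For singular $f$, I would choose explicit representatives of each of the six splitting types (for instance $x^4$, $x^3 y$, $x^2 y^2$, $(x^2 - \alpha y^2)^2$ with $\alpha$ a non-residue, $x^2 y(x-y)$, and $x^2(y^2 - \alpha x^2)$) and expand $[g,f]$ as a linear form in a small number of coefficients of $g$. Slicing $V(\F_p)$ by this linear equation reduces the inner sum to an exponential sum of $\Disc$ restricted to a lower-dimensional affine subspace; these are elementary enough to be evaluated by hand using classical Gauss and Jacobi sums and, combined with the $t$-scaling described above, produce the powers of $p$ and the $\chi_{12}$ signs in lines two through five of the formula.

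The main obstacle is the nonsingular cases, especially the generic line involving $a(E_f')$. Here I would normalize $f$ to a convenient one-parameter family in $I,J$ via classical invariant theory (a depressed quartic will do), and then parametrize the singular $g$'s as $g = \ell^2 h$ with $\ell$ a linear and $h$ a quadratic form, incorporating an inclusion--exclusion to correct the overcounting on the more degenerate strata and to add back the $(2^2)$ stratum, which is missed by this parametrization over $\F_p$. Substituting into the exponential sum and invoking the identity $\Disc(f) = (4I(f)^3 - J(f)^2)/27$, the sum over $(\ell, h)$ collapses, after completion of the square in $h$, to a character sum whose dominant term is the affine point count on $y^2 = x^3 - 3I(f) x^2 + J(f)^2$, i.e.\ $a(E_f')$ up to the expected polynomial corrections in $p^{-1}$. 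The $J=0$ degeneration of this same computation yields the Legendre symbol $\bigl(\tfrac{-3I(f)}{p}\bigr)$, matching the sixth line. I expect the geometric identification of this character sum with $\#E_f'(\F_p)$ to be the technical heart of the argument and the place where the classical invariant theory and elementary algebraic geometry advertised in the abstract play their decisive role.
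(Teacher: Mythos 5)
Your plan contains the paper's key combinatorial idea in embryo: parametrizing singular forms as $\ell^2 q$ and correcting by inclusion--exclusion (adding back the perfect squares $q^2$ to catch the $(2^2)$ stratum and subtracting the $\ell_1^2\ell_2^2$ locus to undo the double count on $(1^21^2)$) is exactly the identity the paper uses, namely $\bm{1}_{\Disc}(h)=\#\psi_{1^22}^{-1}(h)+\#\psi_{2^2}^{-1}(h)-\#\psi_{1^21^2}^{-1}(h)$. Your surrounding framework is different --- you linearize the condition $\Disc(g)=0$ with an extra additive character variable $t$ and exploit the homogeneity $\Disc(cg)=c^6\Disc(g)$, whereas the paper never introduces $t$: it sums over scalar multiples of $h$ (``scalar contraction''), which converts the whole problem into counting $\F_p$-points of the projective scheme $X^f=\{\overline h:\Disc(h)=[h,f]=0\}$ and then of the three schemes above. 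Your route is legitimate in principle, but two of your claimed mechanisms are shaky: the index of sixth powers in $\F_p^\times$ is governed by $p\bmod 3$, not $p\bmod 12$, and if cubic Gauss sums genuinely entered they admit no closed form; in the actual computation $\chi_{12}(p)$ is nothing but $\left(\frac{3}{p}\right)$ and arises from the rationality over $\F_p$ of points and divisors involving $\sqrt{3}$ (e.g.\ the intersection of the $(1^21^2)$-scheme with lines at $[1:-2\pm\sqrt3:0]$), with no Gauss sums needed.

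The genuine gap is the generic case, and it is not a detail you can defer: it is the theorem's main content. In the inclusion--exclusion, the $\ell^2q$ piece contributes only $(p+1)^2$ and the $q^2$ piece only $p+1$ for generic $f$; the term $a(E_f')$ comes entirely from the subtracted $\ell_1^2\ell_2^2$ locus, which is the curve in $\Projsp^1\times\Projsp^1$ cut out by the $(2,2)$-form $c_f(s;t)=[ (s_0x+s_1y)^2(t_0x+t_1y)^2, f]$. You assert that ``completion of the square in $h$'' collapses the $(\ell,h)$ sum to the affine point count of $y^2=x^3-3I(f)x^2+J(f)^2$, but you attribute the elliptic curve to the wrong piece of the decomposition and give no mechanism for identifying the curve that actually appears. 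What is required is (i) the observation that $c_f$ defines a smooth genus one curve exactly when $J(f)\Disc(f)\neq0$ (its discriminant being the discriminant of the quartic covariant $\mathrm{He}_f$, which equals $2^{12}3^6J(f)^2\Disc(f)$), (ii) the Hasse--Weil bound to produce a rational point so that the curve is isomorphic to its Jacobian over $\F_p$, and (iii) an explicit computation of that Jacobian --- the paper invokes the Bhargava--Ho formula for the Jacobian of a $(2,2)$-form in terms of the invariants $\delta_2,\delta_3,\delta_4$, which evaluate here to $\tfrac23 I(f)$, $-\tfrac1{108}J(f)$, $\tfrac19 I(f)^2$ and yield precisely $E_f'$. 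Without step (iii), or an equivalent direct identification of your character sum with $\#E_f'(\F_p)$, the last two lines of the formula (including the $J(f)=0$ degeneration to $\left(\frac{-3I(f)}{p}\right)$, which in the paper requires a separate analysis of how the genus one curve degenerates into a pair of $(1,1)$-divisors and whether they and their intersection points are rational) remain unproved.
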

Note that the discriminant of $E_f'$ in the Weierstrass form
\eqref{eq:def_evp} is equal to
$2^4 3^6 J(f)^2\Disc(f)$, which is nonzero.
Again, we obtain an elegant shape,
with the largest values on the most singular elements.
(Note that the Fouvry-Katz bound includes this example,
so that we see the same shape in sharper form.) 
The most novel feature of the formula is the appearance of $a(E_f')$
in the last line, counting points on an elliptic curve.
For all of the prehomogeneous vector spaces we studied,
including \cite{TT_orbital, ishimoto}, the exponential
sums are always polynomials in $p^{-1}$. It is surprising to us
that even though the exponential sum is no longer such a polynomial,
it still admits a `closed' formula.
It is natural  to ask if related phenomena occur with
other coregular spaces, and if this can be predicted by 
the sheaf cohomology machinery of Fouvry-Katz \cite{FK}
and Katz-Laumon \cite{KL}.

We briefly explain our proof of Theorem \ref{thm:ff-disc}. Our proof shares some spirit with that of \cite{TT_orbital} in background, but the method developed in \cite{TT_orbital} is designed for prehomogeneous vector spaces and is not sufficient to prove our theorem in practice. Our main innovation to prove Theorem \ref{thm:ff-disc} is to study certain ``geometric decompositions'' of the singular set. This reduces the proof to counting the numbers of rational points on three projective schemes for each $f$. By appealing to classical invariant theory we determine the cardinalities. One of the three schemes is a genus one curve when $J(f)\Disc(f)\neq0$. We apply a formula from the work of Bhargava and Ho \cite{BH} to find that its Jacobian variety is $E_f'$.
Note that in general this $E_f'$ is neither isomorphic nor isogenous
to the elliptic curve $E_f$ associated with non-singular $f\in V$,
which is the Jacobian variety of the genus one curve $z^2=f(x,y)$.

As a consequence of Theorem \ref{thm:ff-disc},
we obtain outstanding cancellation in $L_1$ norm,
being $O(p^{-7/2})$ on average,
thanks to the Hasse-Weil bound  $|a(E_f')|\leq 2\sqrt p$.
This implies  unusually strong equidistribution for singular
quartic forms in the space of all binary quartic forms. We prove
this equidistribution in an explicit quantitative form
in Corollary \ref{cor:box-estimate}
and Theorem \ref{thm:box-estimate} as
{\itshape level of distribution} results.
Thanks to the Hasse-Weil bound,
Theorem \ref{thm:ff-disc} allows us to prove that
the level of distribution is $>1/3-\epsilon$ for any $\epsilon>0$.

\medskip
{\bf Sieve Application: Lower density of almost prime discriminants.}
We now describe an application to prime number theory.
As mentioned above, Fouvry and Katz proved in \cite{FK} that
a positive proportion of
primes $p \equiv 1 \pmod 4$ are such that $p + 4$ is squarefree and not the discriminant of a cubic field. To do this, they proved
an {\itshape upper bound} on the number of $p \equiv 1 \pmod 4$ such that $p + 4$ is squarefree and {\itshape is} the discriminant of a cubic field,
smaller than known asymptotics for the number of such primes independent of the cubic field condition.

This was accomplished by
sieve methods. If one can obtain asymptotics on the number of integers $n$ divisible by each integer $d$, counted with multiplicity as $n + 4$ ranges over cubic field
discriminants, and obtain 
a good enough cumulative bound on the error term when summed over $d \leq X^{\alpha}$ for large enough $\alpha > 0$, 
then such a result
follows by standard methods. This $\alpha$ is known as a {\itshape level of distribution} for the sieve problem.

To obtain such asymptotics, they relied on the {\itshape Delone-Faddeev correspondence}, in fact due originally to Levi \cite{levi, DF, GGS}, which establishes
a bijection between cubic rings and $\textnormal{GL}_2(\Z)$-equivalence classes of binary cubic forms. The problem
was thus reduced to counting lattice points satisfying various congruence conditions, and obtaining an acceptable bound on the cumulative error terms. 
To improve the error terms, they majorized their overall counting problem from above in a way that simplified it.
Finally, their exponential sum bounds implied equidistribution, which led to good error terms.

\smallskip
Related ideas were further developed in work of the second and third authors \cite{TT_leveldist}, which obtained lower bounds on the number
of cubic and quartic fields having squarefree and almost prime discriminants. Such a result was previously proved for cubic fields
by Belabas and Fouvry \cite{BF}, with a weaker definition of ``almost prime'', and the goal of \cite{TT_leveldist} was to optimize the method
from a quantitative point of view, applying the strong exponential sum bounds implied by the explicit formulas in \cite{TT_orbital}.

\smallskip
In the case of binary quartic forms, work of Birch and Swinnerton-Dyer \cite{BSD} (further developed by Cremona \cite{cremona}),
establishes a bijection between $\textnormal{PGL}_2(\Q)$-orbits of
locally soluble  binary quartic forms and $2$-Selmer
groups of elliptic curves. (See Section \ref{sec:ap} for a precise statement.) This work was then exploited in
spectacular fashion by Bhargava and Shankar \cite{BS2}, who used geometry-of-numbers techniques
to prove that when all elliptic curves are ordered by height, the average size of the $2$-Selmer group ${\rm Sel}_2(E)$ is $3$, and that
the average rank is therefore bounded by $1.5$ (and, in particular, bounded at all).

These results invite applications to $2$-Selmer groups, where we prove:

\begin{thm}\label{thm:ap-2selmer}
We have
\begin{equation}
\sum_{\substack{
	E:\,\text{elliptic curve $/\Q$}\\
	H(E)<X\\
	\Omega(\disc(E))\leq 4\\
	\disc(E)\yicmtwo{:}\, \text{squarefree}
	}}
\left(|{\rm Sel}_2(E)|-1\right)
\gg\frac{X^{5/6}}{\log X}.
\end{equation}
\end{thm}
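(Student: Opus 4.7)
\textbf{Proof plan for Theorem \ref{thm:ap-2selmer}.} The plan is to combine the Birch and Swinnerton-Dyer parametrization of $2$-Selmer elements by $\PGL_2(\Q)$-orbits of locally soluble binary quartic forms with a weighted linear sieve, whose arithmetic input is the sharp level of distribution for singular binary quartics that follows from Theorem \ref{thm:ff-disc}.

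First, I would translate the sum $\sum_E (|{\rm Sel}_2(E)|-1)$ into a count of nonidentity $\PGL_2(\Q)$-equivalence classes of locally soluble integral binary quartic forms $f\in V(\Z)$, ordered by the height $H(E_{I(f),J(f)})$, using the correspondence of Birch--Swinnerton-Dyer and Cremona. This is the framework systematized by Bhargava and Shankar \cite{BS2}: one cuts out a Siegel fundamental domain for $\PGL_2(\Z)$ acting on $V(\R)$, counts integer points with the prescribed local conditions, and obtains an unsifted total of order $X^{5/6}$. Since the discriminant of $E_{I(f),J(f)}$ is essentially $\Disc(f)$, the conditions ``$\disc(E)$ squarefree'' and ``$\Omega(\disc(E))\le 4$'' translate directly into divisibility conditions on $\Disc(f)$.

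Second, for the sieve I would invoke the level of distribution that the introduction advertises as a consequence of Theorem \ref{thm:ff-disc}, namely Corollary \ref{cor:box-estimate} and Theorem \ref{thm:box-estimate}. Parseval applied to the Fourier transform of the characteristic function of $\{\Disc\equiv 0 \bmod p\}$, together with the Hasse--Weil bound $|a(E_f')|\le 2\sqrt p$, yields total Fourier mass of order $p^{-7/2}$ on average. This promotes the pointwise formula in Theorem \ref{thm:ff-disc} into a level-of-distribution statement for the number of $f\in V(\Z)$ in a box with $d\mid\Disc(f)$, valid uniformly for all squarefree $d\le X^{1/3-\varepsilon}$. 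Because ``$p\mid\Disc(f)$'' has local density $\sim 1/p$ in $V(\F_p)$, the associated sieve is linear (dimension $\kappa=1$).

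Third, I would feed this into a weighted linear sieve of Richert--Greaves type; with dimension $\kappa=1$ and level $\alpha=1/3-\varepsilon$, this is well known to produce a lower bound of order $\asymp(\text{unsifted count})/\log X$ for integers counted with at most $4$ prime factors, yielding $\gg X^{5/6}/\log X$ once the squarefree condition is built in via a standard fundamental-lemma argument. Since the sifted forms remain locally soluble, the BSD correspondence turns this lower bound directly into the asserted one for $\sum(|{\rm Sel}_2(E)|-1)$. The execution parallels the treatment of almost-prime cubic and quartic field discriminants in \cite{TT_leveldist}.

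The main obstacle is twofold. First, the level of distribution estimate must be shown to mesh cleanly with the local solubility conditions: ``$p\mid\Disc(f)$'' and ``$f$ soluble over $\Q_p$'' are coupled precisely at primes of bad reduction, so the local density $\rho(p)$ entering the sieve must be computed carefully (via the splitting-type decomposition appearing in Theorem \ref{thm:ff-disc}) and then separated cleanly from the oscillatory contribution bounded by $a(E_f')$. Second, uniformity in $d$ across the cuspidal regions of the Bhargava--Shankar fundamental domain requires care: near the cusps the naive lattice-point count in a skewed region may swamp the $O(p^{-7/2})$ Fourier cancellation, and a truncation together with a tail estimate in the style of \cite{BS2} will be needed to confine the sieve to the ``main body'' of the fundamental domain without losing the factor $X^{5/6}$.
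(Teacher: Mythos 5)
Your overall strategy---the Birch--Swinnerton-Dyer/Bhargava--Shankar parametrization, the level of distribution $\alpha<1/3$ coming from Theorem \ref{thm:ff-disc} via Corollary \ref{cor:box-estimate}, and the Richert--Greaves weighted linear sieve with $t=4$ as packaged in \cite{TT_leveldist}---is exactly the paper's route. But there are two genuine gaps in the plan.

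First, you pass from the sieve output (a count of $\GL_2(\Z)$-classes of irreducible, locally soluble forms) ``directly'' to a lower bound on $\sum_E(|{\rm Sel}_2(E)|-1)$. This is not direct: Selmer elements correspond to $\PGL_2(\Q)$-classes, and a single $\PGL_2(\Q)$-class may contain many $\GL_2(\Z)$-classes, so a lower bound on the latter gives nothing for the former until you bound the multiplicity $n(f)\le m(f)=\prod_p m_p(f)$. For odd $p$ this follows from squarefreeness of the odd part of $\Disc(f)$ via \cite[Proposition 3.18]{BS2}, but at $p=2$ the discriminant of the relevant integral forms is forced to be highly divisible by $2$ (in the paper, $2^{20}\mid\Disc(f)$), so a new local argument is needed; the paper proves a separate proposition showing $m_2(f)\le M_2(v_2(\Disc(f)))$, hence an absolute bound. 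Relatedly, your plan never specifies the local setup: the paper restricts to a congruence family $f\equiv f_0\pmod{3^3\cdot2^{12}}$ chosen so that $\Disc(f)/2^{20}$ is an integer coprime to $6$, so that $(I(f),J(f))$ yields the minimal Weierstrass model (whence $H(E_f)\asymp H(f)$ and $\Disc(E_f)=\Disc(f)/2^{20}$), and so that $2$-adic solubility is automatic (since $f_0(2,-1)=2^8$). Without some such device the condition ``$\disc(E)$ squarefree'' cannot even be transferred to the form side, because $\Disc(f)$ itself is never squarefree for forms whose invariants give an integral Weierstrass equation.

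Second, your claim that the squarefree condition is ``built in via a standard fundamental-lemma argument'' does not work: the sieve at level $X^{\alpha}$ only controls $p^2\mid\Disc(f)$ for $p\le X^{\alpha/4}$, and for larger primes one needs a genuinely separate input, namely the tail estimate of Shankar--Shankar--Wang \cite[Theorem 6.5]{SSW}, which the paper identifies as critical for exactly this step. On the other hand, the cuspidal difficulty you anticipate is a non-issue in this argument: since only a lower bound is required, the paper replaces the fundamental domain by a compactly supported smooth weight landing inside the sets $\mathcal{F}\cdot L^{(i)}$ of \cite{BS2}, which represents each $\R$-soluble orbit at most eight times and confines the count to a box of side $O(X^{1/6})$, so no analysis near the cusp is needed.
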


Here, following \cite{BS2}, we define the {\itshape height} $H(E)$ of an elliptic curve $E$ to be $\max(4|A|^3, 27B^2)$, where we choose the 
unique integral Weierstrass model $y^2 = x^3 + Ax + B$ for $E$
with the property that no prime $p$ satisfies $p^4 \mid A$ and $p^6 \mid B$,
and $\Omega(\disc(E))$ stands for the number of prime factors of $\disc(E)$.
Theorem \ref{thm:ap-2selmer} should be compared with the formula $\sum_{H(E) < X} \left(|{\rm Sel}_2(E)|-1\right) \sim c X^{5/6}$ (for a constant $c>0$), where $E$ runs through all elliptic curves over $\Q$ with $H(E)<X$,
established by Bhargava and Shankar in \cite{BS2}.

Not much is known on the number of elliptic curves with
prime or almost prime discriminants.
For example, it is unknown whether there are infinitely many
elliptic curves with prime discriminants.
To the authors' knowledge, Theorem \ref{thm:ap-2selmer} is the
first case in which it is shown that the sum in (3)
(without the restriction $H(E)<X$) is infinite.

In the language of binary quartic forms, we also show:

\begin{thm}\label{thm:ap-bqf}
The number of $\GL_2(\Z)$-equivalence
classes of irreducible
integral binary quartic forms with height bounded above by $X$,
whose discriminant is squarefree and has at most $4$ prime factors,
is $\gg X^{5/6}/\log X$.
\end{thm}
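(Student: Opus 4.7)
The plan is to apply a weighted sieve to the set of $\GL_2(\Z)$-equivalence classes of irreducible integral binary quartic forms of bounded height, using the level-of-distribution input supplied by Theorem \ref{thm:ff-disc} (via its quantitative corollary Theorem \ref{thm:box-estimate}). The overall strategy is modelled on Belabas-Fouvry \cite{BF} for cubic fields and on Taniguchi-Thorne \cite{TT_leveldist} for quartic fields: first obtain a reservoir of $\asymp X^{5/6}$ classes via the Bhargava-Shankar count \cite{BS2}, then control the distribution of the discriminant in arithmetic progressions using the Fourier formulas of Theorem \ref{thm:ff-disc}, and finally carry out the weighted sieve.

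To set up the sieve, let $\mathcal{A}(X)$ be the multi-set of discriminants $\disc(f)$ as $f$ ranges over $\GL_2(\Z)$-inequivalent irreducible integral binary quartic forms with $H(f)<X$. Bhargava-Shankar give $|\mathcal{A}(X)|\asymp X^{5/6}$. For each squarefree $d$, let $A_d(X)=\#\{f\in\mathcal{A}(X):d\mid\disc(f)\}$. Theorem \ref{thm:ff-disc}, combined with the Hasse-Weil bound $|a(E_f')|\le 2\sqrt{p}$ (which upgrades the pointwise estimate of size $p^{-3}$ to an $L^1$ cancellation of order $p^{-7/2}$), yields an expansion $A_d(X)=\rho(d)\,|\mathcal{A}(X)|+R_d(X)$ with $\rho$ multiplicative, $\rho(p)=\tfrac{1}{p}+O(p^{-2})$, and the cumulative error bound
\[
\sum_{d\le X^{1/3-\epsilon}}\mu^2(d)\,|R_d(X)| \;=\; o\bigl(|\mathcal{A}(X)|\bigr).
\]
This is precisely the level of distribution $\alpha>1/3-\epsilon$ emphasized just before Theorem \ref{thm:ap-2selmer}.

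Since $\rho(p)\sim 1/p$, we are in the linear-sieve (sifting dimension $\kappa=1$) regime. We apply a weighted sieve of Richert/Greaves type at sifting level $X^{1/3-\epsilon}$, and simultaneously handle the squarefree condition by an Ekedahl-type tail estimate for primes $p$ with $p^2\mid\disc(f)$, now standard after \cite{BS2}. An optimization of the sieve weights, essentially as in \cite{TT_leveldist}, then yields
\[
\#\bigl\{f\in\mathcal{A}(X):\mu^2(\disc(f))=1,\;\Omega(\disc(f))\le 4\bigr\} \;\gg\; \frac{X^{5/6}}{\log X},
\]
which is Theorem \ref{thm:ap-bqf}.

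The main obstacle is the sieve optimization in the last step: with $\alpha=1/3-\epsilon$ and $\kappa=1$ the margin is extremely tight and only just permits the value $r=4$. A weaker cancellation of order $p^{-3}$ (such as would arise without the $a(E_f')$ term in Theorem \ref{thm:ff-disc}) would drop $\alpha$ below $1/3$ and force $r\ge 5$; in this sense the square-root saving from Hasse-Weil applied to the family $\{E_f'\}$ is precisely what makes $\Omega\le 4$ attainable here.
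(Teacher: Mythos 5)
Your proposal follows essentially the same route as the paper: a Richert--Greaves weighted sieve with $t=4$ run at level of distribution $X^{\alpha}$ for any $\alpha<1/3$, supplied by Theorem \ref{thm:ff-disc} through the box estimate of Theorem \ref{thm:box-estimate} and Corollary \ref{cor:box-estimate} (applied via the black-box framework of \cite{TT_leveldist}), with reducible forms discarded by \cite[Lemma 2.3]{BS2} and the squarefree condition imposed by a tail estimate for large primes $p$ with $p^2\mid\Disc(f)$. The one slip is attribution: that tail estimate is the Shankar--Shankar--Wang bound \cite[Theorem 6.5]{SSW}, not something contained in \cite{BS2}; otherwise the only difference is that the paper routes the count through a congruence family $S$ needed solely for the $2$-Selmer application.
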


The height of an integral binary quartic form $f$ 
is defined by $H(f) = \max(|I(f)^3|, J(f)^2/4)$
(again with $I(f)$ and $J(f)$ given in \eqref{eq_defI} and \eqref{eq_defJ},
respectively).
Again, this is the first case that infinitely many
$\GL_2(\Z)$-equivalence classes of irreducible integral binary quartic forms
whose discriminant has at most $4$ prime factors are shown to exist.
In fact, by taking into account the interpretation of orbits of binary quartic forms \cite{BSD}, Theorem \ref{thm:ap-bqf} can be derived from Theorem \ref{thm:ap-2selmer}.

Given Theorem \ref{thm:ff-disc}, the proofs of
Theorems \ref{thm:ap-2selmer} and \ref{thm:ap-bqf}
 follow from methods similar to those in \cite{TT_leveldist},
together with a tail estimate due
to Shankar, Shankar, and Wang \cite{SSW} which is critical to establishing the squarefree condition above.
Because of our sieve, some potential complications arising from 
the $\textnormal{PGL}_2(\Q)$-equivalence (as opposed to $\GL_2(\Z)$-equivalence) can be avoided.

The parameterization of Birch--Swinnerton-Dyer and Cremona is only one of many such associated
to coregular spaces. Of the most immediate interest here are those parametrizations applied by Bhargava and Shankar
\cite{BS3, BS4, BS5} to compute the average sizes of the $3$-, $4$-, and $5$-Selmer groups of elliptic curves and improve
the above stated rank bounds.
We also refer to work of Bhargava and Ho \cite{BH} for a host of further parametrizations associated to genus $1$ curves.

Using the Fouvry-Katz bounds, one may obtain additional applications
along the lines of Theorem \ref{thm:ap-2selmer}, and if results like Theorem \ref{thm:ff-disc} can be obtained for any of these coregular spaces, 
then one may expect concomitant quantitative improvements of these applications.

\medskip{\bf Summary of the paper.} 
In Section \ref{sec:background}
we recall some basic background on the space of
binary quartic forms $V$ and its $\PGL_2$ action.
Section \ref{sec:expsum}
is devoted to proving Theorem \ref{thm:ff-disc}.
We first reformulate
the problem in terms of counting points
on a variety $X^f\subset \Projsp(V)$ which we define in \eqref{eq:def_xf}.
We next introduce a ``geometric decomposition'' of $X^f$;
we construct three morphisms from lower dimensional projective spaces
into $\Projsp(V)$, and express $\# X^f(\F_p)$
in terms of point counts of their inverse images,
or equivalently in terms of subschemes
of the respective domain spaces.
We then demonstrate the counts for the three schemes,
and complete the proof of Theorem \ref{thm:ff-disc}.

We then prove Theorems \ref{thm:ap-2selmer} and \ref{thm:ap-bqf},
following the approach of \cite{TT_leveldist}.
We begin in Section \ref{sec:box_estimate} with a `box estimate',
bounding the total of $|\widehat{\Phi_q}(f)|$, as $f$ ranges over a box
except for the origin
and $q$ ranges over squarefree integers in a dyadic interval.
This immediately yields a `level of distribution' estimate
for the function $\Phi_q$.
In Section \ref{sec:ap} we complete the proofs of Theorems
\ref{thm:ap-2selmer} and \ref{thm:ap-bqf}.
We first recall 
the parametrization of $2$-Selmer elements of
elliptic curves in terms of $V$, closely following
Bhargava and Shankar \cite{BS2}.
We resolve a few technical issues
on the reductions at primes $2$ and $3$,
and bound the multiplicity of the integral orbits inside the rational
orbits in our count, completing the proof of Theorem \ref{thm:ap-2selmer}.
Theorem \ref{thm:ap-bqf} is proved within this process.

Throughout, for real-valued functions $f$ and $g$ whose domain includes all sufficiently large real numbers,
we write $f \gg g$ or $g \ll f$ if there exist positive constants $c,d$
such that $f(X)>d|g(X)|$ for all $X>c$.

\section{The space of binary quartic forms}\label{sec:background}

In this section we recall basic facts and notation about binary quartic forms. (See Section \ref{sec:ap} for background on the 
parametrization of $2$-Selmer groups of elliptic curves.)

Let $V$ be the space of binary quartic forms; that is, for any ring $R$,
\begin{align*}
V(R)	&= \left\{ f(x,y) = a_0x^4 + a_1x^3y + a_2x^2y^2 + a_3xy^3 + a_4y^4 \relmid a_0, a_1, a_2, a_3, a_4 \in R \right\}.
\end{align*}
We consider the natural action of $\gl_2$ on $V$:
\begin{equation}\label{eq:DefOfAction}
	\begin{pmatrix} a & b \\ c & d \end{pmatrix} \cdot f(x,y) = f(ax + cy, bx + dy).
\end{equation}
For this action, there are two fundamental invariants:
\begin{align}
	I(f) &= 12a_0a_4 - 3a_1a_3 + a_2^2, \label{eq_defI} \\
	J(f) &= 72a_0a_2a_4 + 9 a_1a_2a_3 - 27(a_0a_3^2 + a_1^2a_4) - 2a_2^3, \label{eq_defJ}
\end{align}
where $f(x,y) = a_0x^4 + a_1x^3y + a_2x^2y^2 + a_3xy^3 + a_4y^4$.
For $\alpha \in R^\times$ and $g \in \gl_2(R)$, we have
\begin{align*}
	I (\alpha g \cdot f) &= \alpha^2 (\det g)^4 f, \\
	J (\alpha g \cdot f) &= \alpha^3 (\det g)^6 f.
\end{align*}
Thus in particular the value $\left( \frac{-3 I(v)}{p} \right)$ in Theorem \ref{thm:ff-disc} depends only on the $\F_p^{\times}\times\gl_2(\F_p)$-orbit
of $v$.
Another distinguished invariant, the \textit{discriminant} $\Disc(f)$ of a binary quartic form $f(x,y)$ is defined as
\begin{equation}\label{eq:def_disc}
	\Disc(f) = \frac{4I(f)^3 - J(f)^2}{27}.
\end{equation}
Note that $\Disc(\alpha g \cdot f) = \alpha^6 (\det g)^{12} \Disc(f)$ for $\alpha \in k^\times$ and $g \in \gl_2(k)$.
The vanishing of the discriminant is equivalent that $f$ has a multiple factor.
We also consider another action of $\pgl_2$, induced by the twisted action
\begin{equation}
	\begin{pmatrix} a & b \\ c & d \end{pmatrix} \circ f(x,y) =
	\frac{1}{(ad - bc)^2} f(ax + cy, bx + dy).
\end{equation}
Under this action of $\pgl_2$, the functions $I, J$ and $\Disc$ are actually invariants.

\yicmtwo{When $2$ and $3$ are not zerodivisors,} we define a bilinear form on $V$ 
\yicmtwo{valued in $12^{-1}R$} by
\begin{equation}\label{eq:bilinear}
	\left[ f, h \right] = a_0b_0 + \frac{a_1b_1}{4} + \frac{a_2b_2}{6} + \frac{a_3b_3}{4} + a_4b_4,
\end{equation}
where $f = f(x,y) = a_0x^4 + a_1x^3y + a_2x^2y^2 + a_3xy^3 + a_4y^4$
and $h = h(x,y) = b_0x^4 + b_1x^3y + b_2x^2y^2 + b_3xy^3 + b_4y^4$.
This bilinear form satisfies
\(
	[g \cdot f, h] = [f, g^T \cdot h].
\)
When $R$ is a finite field of characteristic 
$p > 3$ this form induces an isomorphism $V^* \to V$, and when $R = \Z$ it induces an injection 
$V^*(\Z) \hookrightarrow V(\Z)$.

When $R$ is a field, we recall from \cite{BS2} that $0 \neq f \in V(R)$
has splitting type $(d_1^{e_1} \cdots d_r^{e_r})$ if $f$ has $r$ distinct irreducible factors, with the $i$th factor
being of degree $d_i$ and appearing to multiplicity $e_i$. The possible splitting types are: $(1111)$, $(211)$,
$(31)$, $(22)$, $(4)$, $(1^2 11)$, $(1^2 1^2)$, $(2^2)$, $(1^3 1)$, and $(1^4)$. The first five splitting types,
with $\Disc(f) \neq 0$, are called {\itshape nondegenerate} (or nonsingular), and the remaining five (and the zero form) are called
{\itshape degenerate} (or singular).

For finer classification results concerning $\GL_2$-orbits of binary cubic forms when $R$ is a finite field -- of particular
interest in the nonsingular case -- we refer to Kamenetsky \cite{kam} and Kaipa, Patanker, and Pradhan \cite{KPP}.
The thesis \cite{kam} also includes some preliminary work which might also prove useful in the computation of the Fourier transforms
of other $\GL_2(\F_p)$-invariant functions.

\section{Exponential sums and counting points on projective schemes}\label{sec:expsum}
In this section we prove Theorem \ref{thm:ff-disc}.
In Section \ref{subsec:contraction}
we reduce the problem to counting the $\F_p$-rational points
of a projective variety $X^f$, defined in \eqref{eq:def_xf}.
In Section \ref{subsec:decomposition} we introduce
a ``geometric decomposition'' of $X^f$, which plays
a crucial role in our proof.
This reduces the problem to counting rational points of
three projective schemes $\xa$, $\xb$ and $\xc$.
In the next three subsections we demonstrate the counts
for these schemes. Finally in Section \ref{subsec:proof-expsum}
we complete the proof of Theorem \ref{thm:ff-disc}.
We assume that $p \neq 2, 3$ throughout this section.

\subsection{Scalar contraction}\label{subsec:contraction}
To begin the proof of Theorem \ref{thm:ff-disc}, 
we first reduce the proof to counting rational points on projective \yicmtwo{schemes}.
We regard $\Projsp(V)$ as the set of 1-dimensional subspaces of $V$, 
and denote the one-dimensional space spanned by a non-zero vector $h \in V$ as $\overline{h} \in \Projsp(V)$.
(We will later do the same for other vector spaces as well.)
Writing $\bm{1}_{\Disc}$ for the characteristic function of those $h \in V$ with $\Disc(h) = 0$, 
we thus have
\begin{equation}\label{eq:expsum1}
	p^5 \widehat{\Phi_p}(f) = \sum_{h \in V} \bm{1}_{\Disc}(h)\psi([h,f]) = 
	1 + \sum_{\overline{h} \in \Projsp(V)} \sum_{h \in \overline{h}} \yicmtwo{\bm{1}_{\Disc}(h)} \psi([h,f]).
\end{equation}
The vanishing of $\Disc(h)$ and $[h, f]$ do not depend on the choice of $h \in \overline{h}$.
This allows us to define
\begin{align}
	X &:= \left\{ \overline{h} \in \Projsp(V) \mid \Disc(h) = 0 \; (h \in \overline{h}) \right\}, \\
	X^f &:= \left\{ \overline{h} \in \Projsp(V) \mid \Disc(h) = [h, f] = 0 \; (h \in \overline{h}) \right\}. \label{eq:def_xf}
\end{align}
We abuse the same notation $\bm{1}_{\Disc}$ to write the characteristic function of
the algebraic subset $X$ in $\Projsp(V)$.
Using them and the equality
\[
	\sum_{\substack{h \in \overline{h} \\ h \neq 0}} \psi([h,f]) = 
	\begin{cases}
		p-1 & ([h,f] = 0) \\
		-1 & ([h,f] \neq 0),
	\end{cases}
\]
we have
\begin{align}
p^5 \widehat{\Phi_p}(f) 
	&=  1 + (p-1) \sum_{\overline{h} \in \Projsp(V), [\overline{h},f] = 0} \bm{1}_{\Disc}(\overline{h}) 
	- \sum_{\overline{h} \in \Projsp(V), [\overline{h},f] \neq 0} \bm{1}_{\Disc}(\overline{h}) \nonumber \\
	&= 1 + p \#X^f(\F_p) - \# X(\F_p). \label{eq:expo_breakdown}
\end{align}
By the relation $[g \cdot a, b] = [a, g^T \cdot b]$, we see that $\widehat{\Phi_p}(f)$ 
depends only on the $\PGL_2(\F_p)$-orbit of $f$, and we also note that $\widehat{\Phi_p}(\lambda f) = \widehat{\Phi_p}(f)$
for all $\lambda \in \F_p^{\times}$. (Equivalently, we could expand the $\PGL_2$ action to a natural $\GL_1 \times \PGL_2$ or  $\GL_1 \times \GL_2$ action,
with respect to which $\Phi_p$ and thus $\widehat{\Phi_p}$ would still be invariant.)

To calculate $\# X(\F_p)$, we apply the following:
\begin{lem}
Let $p$ be a prime and let $n \geq 3$. Then, 
the number of squarefree $n$-ic binary forms over $\F_p$ is $p^{n + 1} (1 - p^{-1})(1 - p^{-2})$.
\end{lem}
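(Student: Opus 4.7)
Proof proposal: The plan is to decompose squarefree $n$-ic binary forms according to the $y$-adic valuation and thereby reduce the count to the classical enumeration of monic squarefree univariate polynomials.

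First, I would observe that any nonzero binary form $f$ of degree $n$ over $\F_p$ factors uniquely as $f = y^{j} g$, where $j = v_y(f) \geq 0$ and $g$ is a binary form of degree $n - j$ with $y \nmid g$. Squarefreeness of $f$ then forces $j \in \{0,1\}$ together with $g$ squarefree. When $j = 0$, the coefficient $a_0$ of $x^n$ in $f$ is nonzero, and $a_0^{-1} f(x, 1)$ is a monic polynomial of degree $n$ in $x$, squarefree if and only if $f$ is; conversely, each pair (nonzero scalar $a_0$, monic squarefree polynomial of degree $n$) reconstructs such an $f$ uniquely by homogenization. When $j = 1$, the form $g = f/y$ has degree $n - 1$ and is squarefree with $y \nmid g$, and the same recipe reduces the count of this case to $(p-1)$ times the number of monic squarefree polynomials of degree $n - 1$.

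Next, I would invoke the classical count that the number of monic squarefree polynomials of degree $k$ over $\F_p$ equals $p^{k} - p^{k-1}$ for $k \geq 2$. This follows from unique factorization of each monic polynomial as (squarefree) $\cdot$ (monic square): writing $s_k$ for the squarefree count, one obtains $\sum_k s_k T^k = (1 - pT^{2})/(1 - pT)$, and extracting coefficients yields the formula. Combining the two cases above, and using that $n \geq 3$ ensures $n - 1 \geq 2$, the total number of squarefree $n$-ic binary forms is $(p-1)(p^{n} - p^{n-1}) + (p-1)(p^{n-1} - p^{n-2}) = (p-1)^{2}(p+1)p^{n-2}$, which rearranges to $p^{n+1}(1 - p^{-1})(1 - p^{-2})$, as claimed. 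There is no real obstacle; the only mild subtlety is verifying that the $y$-adic dichotomy partitions the squarefree forms exactly once, and the hypothesis $n \geq 3$ is precisely what places both of the required univariate counts in the regime where the clean formula $p^{k} - p^{k-1}$ applies.
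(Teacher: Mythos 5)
Your proof is correct and follows essentially the same route as the paper's: the case split on $v_y(f)\in\{0,1\}$ is exactly the paper's observation that dehomogenizing and dividing by the leading coefficient gives a $(p-1)$-to-$1$ surjection onto monic squarefree polynomials of degree $n$ or $n-1$, followed by the classical count $p^k-p^{k-1}$ for $k\ge 2$. The only difference is that you also supply the generating-function proof of that classical count, which the paper simply cites.
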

\begin{proof} Dehomogenizing and dividing by the leading constant, we obtain a $(p - 1)$-to-$1$ surjection onto the set of monic squarefree polynomials of degree $n$ or $n - 1$. 
It is classically known that, for $k \geq 2$, there are $p^k (1 - p^{-1})$ monic squarefree polynomials of degree $k$ (see e.g. \cite{yuan} for a proof), and
\[
(p - 1) \left[ p^n (1 - p^{-1}) + p^{n - 1} (1 - p^{-1}) \right] = p^{n + 1} (1 - p^{-1})(1 - p^{-2}).
\]
\end{proof}
Therefore, there are $p^4 + p^3 - p^2$ singular quartic forms.
This proves Theorem \ref{thm:ff-disc} for $f=0$, and we also have
\begin{equation}
		\# X(\F_p) = \sum_{\overline{h} \in \Projsp(V)} \bm{1}_{\Disc}(\overline{h}) = \frac{1}{p - 1} (p^4 + p^3 - p^2 - 1) = p^3 + 2p^2 + p + 1.
\end{equation}
Thus to prove Theorem \ref{thm:ff-disc},
it is enough to calculate $\# X^f(\F_p)$ for $0\neq f \in V(\F_p)$.

\subsection{Geometric decomposition of the singular set}\label{subsec:decomposition}
Let $\Sym^k\F_p^2$ denote the space of binary $k$-ic forms in variables $x$ and $y$ over $\F_p$. We consider its coordinates with the basis $x^k, x^{k-1}y,\dots, y^k$ to identify $\Sym^k\F_p^2$ with $\F_p^{k+1}$.

Now we consider the problem: for each degenerate point $f \in \Projsp(V)=\Projsp(\Sym^4\F_p^2)$ (which we will typically lift to an element of $V$),
can we represent the form $f$ as $l^2q$ for a linear form $l=l(x,y)$ and 
a quadratic form $q = q(x,y)$?  If we can, in how many ways can we represent it?

More formally, we consider the following morphism:
\begin{align*}
	\psia \colon \Projsp(\Sym^1\F_p^2) \times \Projsp(\Sym^2 \F_p^2) &\to \Projsp(\Sym^4 \F_p^2) = \Projsp(V) \\
	(s_0x + s_1y, t_0x^2 + t_1xy + t_2y^2) &\mapsto (s_0x + s_1y)^2(t_0x^2 + t_1xy + t_2y^2),
\end{align*}
In terms of coordinates, the map is described as
\[
	\psia([s_0:s_1], [t_0:t_1:t_2]) = [s_0^2t_0: s_0^2t_1 + 2s_0s_1t_0 : s_0^2t_2+ 2s_0s_1t_1 + s_1^2t_0 : 2s_0s_1t_2 + s_1^2t_1 : s_1^2t_2].
\]
The image is contained in the degenerate locus, and we can calculate the cardinalities of the inverse images:
\begin{itemize}
	\item The case $(1^4)$: for a form $h = l_0^4 \in \Projsp(\Sym^4 \F_p^2)$, 
	the only way is $l = l_0 \in \Projsp(\F_p^2), q = l_0^2 \in \Projsp (\Sym^2 \F_p^2)$.
	\item The case $(1^31)$: for a form $h = l_0^3l_1$, the only way is $l = l_0, q = l_0l_1$.
	\item The case $(1^21^2)$: for a form $h = l_0^2l_1^2$,
	there are two ways: $l = l_0, q = l_1^2$, and $l = l_1, q = l_0^2$.
	\item The case $(2^2)$: zero, since the forms are not divisible by a linear form over $\F_p$.
	\item The case $(1^211)$: for a form $h = l_0^2l_1l_2$, the only way is $l = l_0, q = l_1l_2$.
	\item The case $(1^22)$: for a form $h = l_0^2q_0$, the only way is $l = l_0, q = q_0$.
\end{itemize}

Thus, through the morphism $\psia$, we count the case $(1^21^2)$ doubly,
and do not count the case $(2^2)$.
To flatten this multiplicity, we consider two other morphisms:
\begin{align*}
	\psib \colon \Projsp(\Sym^2 \F_p^2) &\to \Projsp(\Sym^4 \F_p^2) = \Projsp(V) \\
	t_0x^2 + t_1xy + t_2y^2 &\mapsto (t_0x^2 + t_1xy + t_2y^2)^2
\end{align*}
and 
\begin{align*}
	\psic \colon \Projsp(\Sym^1\F_p^2) \times \Projsp(\Sym^1\F_p^2) &\to \Projsp(\Sym^4 \F_p^2) = \Projsp(V) \\
	(s_0x + s_1y, t_0x + t_1y) &\mapsto (s_0x + s_1y)^2(t_0x + t_1y)^2.
\end{align*}
We can count $\#\psib^{-1}(h)$ and $\#\psic^{-1}(h)$ similarly.
The following table summarizes the results.
Note that $\#\psi_i^{-1}(h)$
depends only on the splitting type of the binary quartic $h$ for each $1 \le i \le 3$.

\begin{table}[ht]
\centering\begin{tabular}{c||ccc|c}
	\hline
	splitting type of $h$  & $\#\psia^{-1}(h)$ & $\#\psib^{-1}(h)$ & $\#\psic^{-1}(h)$ & $\bm{1}_{\Disc}(h)$\\
	\hline
	\hline
	non-degenerate 	& 0 & 0 & 0 & 0 \\
	\hline
	$(1^4)$ 	& 1 & 1 & 1 & 1 \\
	\hline
	$(1^31)$ 	& 1 & 0 & 0 & 1 \\
	\hline
	$(1^21^2)$ 	& 2 & 1 & 2 & 1 \\
	\hline
	$(2^2)$ 	& 0 & 1 & 0 & 1 \\
	\hline
	$(1^211)$ 	& 1 & 0 & 0 & 1 \\
	\hline
	$(1^22)$ 	& 1 & 0 & 0 & 1 \\
	\hline
\end{tabular} 
\caption{Values of $\#\psi_\sigma^{-1}(h)$ and $\bm{1}_{\Disc}(h)$}
\end{table}
From this table we have the following identity:
\begin{equation}\label{eq:1_breakdown}
	\bm{1}_{\Disc}(\overline{h}) = \#\psia^{-1}(\overline{h}) + \#\psib^{-1}(\overline{h}) - \#\psic^{-1}(\overline{h}).
\end{equation}
Thus it is enough to count the sums
\[
	\sum_{\overline{h} \in \Projsp(V), [\overline{h},f]=0} \#\psi_\sigma^{-1}(\overline{h})
\]
for $\sigma\in\{1^22, 2^2, 1^21^2\}$.
They are equal to the number of the $\F_p$-rational points on
\begin{align}
\xa	&:= \left\{(l,q) \in \Projsp(\Sym^1\F_p^2) \times \Projsp(\Sym^2 \F_p^2)
		\relmid [l^2q, f] = 0 \right\}, \label{eq:xa}\\
\xb	&:= \left\{q \in \Projsp(\Sym^2 \F_p^2)
		\relmid [q^2, f] = 0 \right\}, \label{eq:xb}\\
\xc	&:= \left\{(l_1,l_2) \in \Projsp(\Sym^1\F_p^2) \times \Projsp(\Sym^1\F_p^2)
		\relmid [l_1^2l_2^2, f] = 0\right\},\label{eq:xc}
\end{align}
respectively. We refer to them as the $1^22$-scheme, $2^2$-scheme, and $1^21^2$-scheme, respectively. They are varieties for most $f$, but in some degenerate cases, they can be reducible or non-reduced as we will see in the concrete analysis.

We introduce the action of $\GL_2$ on 
the space of binary linear forms $\Sym^1(\F_p^2)$ and binary quadratic forms $\Sym^2(\F_p^2$) 
in the same way as \eqref{eq:DefOfAction}.
Then the morphisms $\psi_\sigma$ are equivariant.
Also since $[gf, h] = [f, g^{T}h]$,
if $g\in\gl_2(\F_p)$, then
$X_\sigma^f$ and $X_\sigma^{gf}$ are isomorphic
as varieties over $\F_p$. In particular the sets of
their $\F_p$-valued points have the same cardinalities.

From now, we write
\begin{equation}
f = f(x,y) = a_0x^4 + a_1x^3y + a_2x^2y^2 + a_3xy^3 + a_4y^4.
\end{equation}

\subsection{Counting points on the $1^22$-scheme}
In this subsection we determine $\#\xa(\F_p)$.
First we investigate the degeneracy of $\xa$.
The required equality is represented in terms of coordinates as
\[
	\left( a_0s_0^2 + \frac{1}{2}a_1s_0s_1 + \frac{1}{6}a_2s_1^2 \right)t_0 
	+ \left( \frac{1}{4}a_1s_0^2 + \frac{1}{3}a_2s_0s_1 + \frac{1}{4}a_3s_1^2 \right) t_1 + 
	\left( \frac{1}{6}a_2s_0^2 + \frac{1}{2}a_3s_0s_1 + a_4s_1^2 \right) t_2 = 0.
\]
We choose $s = [s_0 : s_1] \in \Projsp(\F_p^2)$.
If one of the coefficients of $t_0, t_1$, and $t_2$ does not vanish,
then the equation defines a linear subspace in $\Projsp(\Sym^2 \F_p^2)$,
and there are $\#\Projsp^1(\F_p) = p+1$ choices for $[t_0:t_1:t_2] \in \Projsp(\Sym^2 \F_p^2)$.
Otherwise, there are no restrictions to choose $[t_0: t_1 : t_2]$,
and $\#\Projsp^2(\F_p) = p^2 + p+1$ choices for $[t_0:t_1:t_2]$.
Thus it is enough to know when all the three coefficients vanish.

We find that the coefficients are the
second partial derivatives of $f(x,y)$ multiplied by $1/12$:
\begin{align*}
	\frac{1}{12} \left. \frac{\partial^2}{\partial x^2}f(x,y) \right|_{x=s_0, y=s_1} &= a_0s_0^2 + \frac{1}{2}a_1s_0s_1 + \frac{1}{6}a_2s_1^2, \\
	\frac{1}{12} \left. \frac{\partial^2}{\partial x \partial y}f(x,y) \right|_{x=s_0, y=s_1} &= \frac{1}{4}a_1s_0^2 + \frac{1}{3}a_2s_0s_1 + \frac{1}{4}a_3s_1^2, \\
	\frac{1}{12} \left. \frac{\partial^2}{\partial y^2}f(x,y) \right|_{x=s_0, y=s_1} &= \frac{1}{6}a_2s_0^2 + \frac{1}{2}a_3s_0s_1 + a_4s_1^2.
\end{align*}
With Euler's formula
\[
	x\frac{\partial}{\partial x}f + y\frac{\partial}{\partial y}f = \deg(f) f,
\]
if all three coefficients vanish, then we have
\[
	f(s_0, s_1) = \frac{1}{4}\frac{\partial f}{\partial x}(s_0, s_1) = \frac{1}{12}\frac{\partial^2 f}{\partial x^2}(s_0, s_1) = 0.
\]
Hence, if $s_1 \neq 0$, then the polynomial $f(x,1)$ has a triple root at $x = s_0/s_1$.
Similarly, if $s_0 \neq 0$, then the polynomial $f(1,y)$ has a triple root at $y = s_1/s_0$.
Thus, if all the three coefficients vanish at $s = [s_0:s_1]$, then 
$f(x,y)$ has a factor $-s_1x + s_0y$ of at least multiplicity three.
Conversely, if $f(x,y)$ has a triple or quadruple root,
we can see that the second partial derivatives vanish at the root.
Hence the vanishing occurs just when $f$ is of the type $(1^31)$ or $(1^4)$,
and the point $s = [s_0: s_1]$ corresponds to the triple or quadruple root.

Now we determine the number of $\F_p$-rational points on $\xa$.
If $f$ is neither of the type $(1^31)$ nor $(1^4)$,
then for any $s = [s_0 : s_1]$ there are $p+1$ choices of $[t_0:t_1:t_2]$.
Hence in this case we have \[\#\xa(\F_p) = (p+1)^2.\]

If $f$ is of the type $(1^31)$ or $(1^4)$,
then for the point $[s_0: s_1] \in \Projsp^1(\F_p)$ corresponding to the multiple root of $v$, there are $p^2+p+1$ choices of $[t_0:t_1:t_2]$.
For other $[s_0:s_1]$, there are $p+1$ choices of $[t_0:t_1:t_2]$.
Hence \[\#\xa(\F_p) = 1 \cdot (p^2+p+1) + p \cdot (p+1) = 2p^2 + 2p + 1.\]

To summarize, we have:

\begin{prop}
	For $f \in \Projsp(V)$, we have
	\[
		\#\xa(\F_p) = \begin{cases}
			(p+1)^2 & \mbox{splitting type of $f$ is neither $(1^31)$ nor $(1^4)$,} \\
			2p^2 + 2p + 1 & \mbox{splitting type of $f$ is either $(1^31)$ or $(1^4)$}.
		\end{cases}
	\]
\end{prop}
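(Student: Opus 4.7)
The plan is to compute $\#\xa(\F_p)$ by fibering the scheme over its projection to the first factor $\Projsp(\Sym^1 \F_p^2)$. Writing the defining equation $[l^2 q, f] = 0$ in the coordinates $s = [s_0 : s_1]$ for $l$ and $[t_0 : t_1 : t_2]$ for $q$, it becomes a single linear equation in $t_0, t_1, t_2$ whose coefficients are quadratic forms in $s_0, s_1$ depending on the $a_i$. For each fixed $s \in \Projsp^1(\F_p)$, the fiber in $\Projsp(\Sym^2 \F_p^2) = \Projsp^2$ is either a projective line (contributing $p+1$ points) when at least one of the three coefficients is nonzero, or the whole $\Projsp^2$ (contributing $p^2 + p + 1$ points) when all three vanish. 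The whole calculation therefore reduces to identifying the locus in $s$ where the three coefficients simultaneously vanish.

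The key observation is that the three coefficients are, up to the common factor $1/12$, precisely the second partial derivatives of $f$ evaluated at $(s_0, s_1)$. By Euler's identity $x f_x + y f_y = 4 f$ applied iteratively, the simultaneous vanishing of $f_{xx}, f_{xy}, f_{yy}$ at $s$ forces $f_x$, $f_y$, and $f$ themselves to vanish at $s$ (using that $p > 3$ so the denominators are invertible). This is equivalent to $-s_1 x + s_0 y$ dividing $f$ with multiplicity at least $3$. The converse is immediate: if $f$ has a root of multiplicity $\geq 3$ at $s$, then all second derivatives vanish there. Hence the degenerate fiber occurs exactly when $f$ has splitting type $(1^3 1)$ or $(1^4)$, and in those cases there is a unique such $s \in \Projsp^1(\F_p)$.

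Summing fiber sizes finishes the proposition. If $f$ is neither of type $(1^3 1)$ nor $(1^4)$, every one of the $p+1$ points of $\Projsp^1(\F_p)$ contributes a $(p+1)$-point fiber, giving $(p+1)^2$. If $f$ is of type $(1^3 1)$ or $(1^4)$, the distinguished $s$ contributes $p^2 + p + 1$ while the remaining $p$ points each contribute $p+1$, giving $(p^2+p+1) + p(p+1) = 2p^2 + 2p + 1$.

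The main obstacle is identifying the exact vanishing locus of the three coefficients. Recognizing them as scaled second partial derivatives is essential, and the bidirectional implication requires Euler's identity as well as the hypothesis $p > 3$ to clear denominators; once this is in place the rest is a routine projective point count. It is also worth noting that the analysis implicitly uses the fact that multiplicity $\geq 3$ for a degree-$4$ form means multiplicity exactly $3$ or $4$, so the unique $s$ is well-defined in both degenerate cases.
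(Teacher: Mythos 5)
Your proposal is correct and follows essentially the same route as the paper: fibering $\xa$ over $\Projsp(\Sym^1\F_p^2)$, recognizing the coefficients of the linear condition on $q$ as $\tfrac{1}{12}$ times the second partial derivatives of $f$, and using Euler's identity (with $p>3$) to identify the degenerate fibers with roots of $f$ of multiplicity at least $3$. The final point count matches the paper's computation exactly.
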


\subsection{Counting points on the $2^2$-scheme}
In this subsection we determine $\#\xb(\F_p)$.
The \yicmtwo{subscheme} $\xb \subset \Projsp(\Sym^2\F_p^2)$
is defined by a quadratic form.
To determine the number of rational points, we have to consider two conditions:
the degeneracy of the quadratic form (equivalently, the dimension of the singular locus) and the
splitting of irreducible components if $\xb$ is not geometrically irreducible.

First we study the degeneracy of $\xb$.
The defining equation of $\xb$ is represented in terms of coordinates as
\begin{equation}\label{case2surf}
	a_0t_0^2 + \frac{1}{2}a_1t_0t_1 + \frac{1}{6}a_2(2t_0t_2 + t_1^2) + \frac{1}{2}a_3t_1t_2 + a_4t_2^2 = 0,
\end{equation}
or equivalently, 
\[
	\begin{pmatrix}
		t_0 & t_1 & t_2
	\end{pmatrix}
	\begin{pmatrix}
		a_0 & \frac{1}{4}a_1 & \frac{1}{6}a_2 \\
		\frac{1}{4}a_1 & \frac{1}{6}a_2 & \frac{1}{4}a_3 \\
		\frac{1}{6}a_2 & \frac{1}{4}a_3 & a_4
	\end{pmatrix}
	\begin{pmatrix}
		t_0 \\ t_1 \\ t_2
	\end{pmatrix} = 0.
\]
The Gram matrix of this quadratic form
\[
	M_f = \begin{pmatrix}
		a_0 & \frac{1}{4}a_1 & \frac{1}{6}a_2 \\
		\frac{1}{4}a_1 & \frac{1}{6}a_2 & \frac{1}{4}a_3 \\
		\frac{1}{6}a_2 & \frac{1}{4}a_3 & a_4
	\end{pmatrix}
\] 
 is called the \textit{catalecticant matrix} of $f = f(x,y)$, and its determinant is computed as
\begin{align*}
	\frac{1}{6}a_0a_2a_4 + \frac{1}{48}a_1a_2a_3 - \frac{1}{16}(a_0a_3^2 + a_1^2a_4) - \frac{1}{216}a_2^3 
	= \frac{1}{432}J(f).
\end{align*}
Hence the plane quadric $\xb \subset \Projsp(\Sym^2\F_p^2)$ defined by \eqref{case2surf} is nonsingular if and only if
$J(f) \neq 0$.

To investigate the degenerate case, we consider the adjugate matrix of the catalecticant matrix $M_f$ of $f$.
It is described as
\begin{equation}\label{AdjCat}
	\widetilde{M_f} = \frac{1}{144}\begin{pmatrix}
		24a_2a_4 - 9a_3^2 & -36a_1a_4 + 6a_2a_3 & 9a_1a_3 - 4a_2^2 \\
		-36a_1a_4 + 6a_2a_3 & 144a_0a_4-4a_2^2 & -36a_0a_3 + 6a_1a_2 \\
		9a_1a_3 - 4a_2^2 & -36a_0a_3 + 6a_1a_2 & 24a_0a_2 - 9a_1^2
	\end{pmatrix}.
\end{equation}
The singular locus of $\xb$ is equal to $\Ker(M_f)$, and hence
is of dimension greater than $0$ if and only if the corank of the catalecticant matrix $M_f$ is greater than 1.
It is also equivalent that $\widetilde{M}_f$  is equal to the zero matrix.
(Note that we are assuming $f\neq0$.)
By some computation, it is equivalent that it is of the type $(1^4)$.
Summing up, we have the following classification:
\begin{enumerate}[{(1)}]
	\item $\xb$ is nonsingular: it is equivalent to $J(f) \neq 0$.
	\item $\Sing(\xb)$ consists of one point: it is equivalent to $J(f) = 0$ but $f$ is not of splitting type $(1^4)$. \label{CaseOfTwoLines}
	\item $\Sing(\xb)$ consists of a line: it is equivalent that $f$ is of splitting type $(1^4)$.
\end{enumerate}

In the case \eqref{CaseOfTwoLines}, $\xb$ consists of two lines over $\overline{\F_p}$, and we have to consider whether they split over $\F_p$ or not.
We need the following lemma:
\begin{lem}\label{lem:WaringDecomp}
	For a binary quartic form $f \in V$, the following are equivalent:
	\begin{itemize}
		\item The invariant $J(f) = 432 \det M_f$ vanishes.
		\item The quartic $f$ is of the type $(1^31)$,
		or there are two linear forms $l_0, l_1$ 
		over the algebraic closure $\overline{\F}_p$ such that
		\(
			f = l_0^4 + l_1^4.
		\)
	\end{itemize}
	Moreover, if $f$ is non-degenerate, $l_0, l_1$ are linearly independent over $\overline{\F}_p$
	and the summands $l_0^4, l_1^4$ are uniquely determined by $f$ up to permutation.
	If $f \neq 0$ is degenerate, then $f$ is of the type $(1^31)$ or $(1^4)$.
\end{lem}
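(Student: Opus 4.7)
\textbf{Proof plan for Lemma \ref{lem:WaringDecomp}.}

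The plan is to translate the vanishing of $J(f) = 432\det M_f$ into the existence of a nonzero element in $\ker M_f$, regarded as a binary quadratic form $q$, and to analyze the factorization of $q$ over $\overline{\F}_p$. The two cases --- $q$ squarefree versus $q$ a perfect square --- will produce the two halves of the stated dichotomy.

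For the ``easy'' direction, I use the transformation rule $J(\alpha g \cdot f) = \alpha^3 (\det g)^6 J(f)$ to reduce to canonical shapes over $\overline{\F}_p$. If $f$ is of type $(1^3 1)$, I may take $f = x^3 y$; only $a_1$ is nonzero, so every monomial in $J(f) = 72 a_0 a_2 a_4 + 9 a_1 a_2 a_3 - 27(a_0 a_3^2 + a_1^2 a_4) - 2 a_2^3$ vanishes. For $f = l_0^4 + l_1^4$ with $l_0, l_1$ linearly independent, I reduce to $f = \alpha x^4 + \beta y^4$: only $a_0, a_4$ are nonzero, so $J$ vanishes term-by-term. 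The linearly dependent case collapses to $f = c l^4$, and $J$ vanishes by the same token.

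For the hard direction, assume $J(f) = 0$ and pick a nonzero $q = t_0 x^2 + t_1 xy + t_2 y^2 \in \ker M_f$, which factors over $\overline{\F}_p$ as $q = l_0 l_1$ (allowing $l_0 = l_1$). Using the $\GL_2(\overline{\F}_p)$-equivariance of the relation $M_f q = 0$ (checked by a direct change-of-variables computation tracking $M_f$ and $q$ together), I normalize $q$. If $l_0, l_1$ are linearly independent I may take $q = xy$; then $M_f q = (a_1/4,\, a_2/6,\, a_3/4)^T = 0$ forces $a_1 = a_2 = a_3 = 0$, so $f = a_0 x^4 + a_4 y^4$ is visibly a sum of two fourth powers. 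If $l_0$ and $l_1$ are proportional I may take $q = x^2$; then $M_f q = (a_0,\, a_1/4,\, a_2/6)^T = 0$ forces $a_0 = a_1 = a_2 = 0$, so $f = y^3(a_3 x + a_4 y)$, which is of type $(1^3 1)$ when $a_3 \neq 0$ and of type $(1^4)$ (included in the Waring form as $f = l_0^4 + 0^4$) when $a_3 = 0$ and $a_4 \neq 0$.

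For the uniqueness in the non-degenerate case, the same reduction shows that when $f = l_0^4 + l_1^4$ with $l_0, l_1$ linearly independent, $M_f$ has rank exactly $2$ in the normal coordinates (diagonal with two nonzero entries), so $\ker M_f$ is one-dimensional; hence the apolar quadratic equals $l_0 l_1$ up to scalar, forcing the multiset $\{l_0^4, l_1^4\}$ to be determined by $f$. For the final assertion, suppose $f \neq 0$ is degenerate with $J(f) = 0$. The identity $\Disc(f) = (4I(f)^3 - J(f)^2)/27$ forces $I(f) = 0$; by the equivalence just proved, either $f$ is of type $(1^3 1)$ or $f = l_0^4 + l_1^4$. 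In the latter case, linear independence of $l_0, l_1$ would give $I(f) = 12\alpha\beta \neq 0$ after reduction, contradicting degeneracy, so $l_0, l_1$ are proportional and $f$ is of type $(1^4)$. The main obstacle is justifying the coordinate reduction that places $q$ into one of the normal forms $xy$ or $x^2$: because $[\cdot,\cdot]$ is only $\GL_2$-equivariant up to a character, one must verify that $\ker M_{g \cdot f}$ is the image of $\ker M_f$ under the compatible action on $\Sym^2 \F_p^2$. Once this equivariance is established, the remainder is routine linear algebra.
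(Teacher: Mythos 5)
Your proposal is correct and follows essentially the same route as the paper: both directions are handled by passing to the kernel of the catalecticant matrix $M_f$ (equivalently the apolar quadratic), normalizing it to $xy$ or $x^2$ via the $\GL_2$-equivariance $[g\cdot f,h]=[f,g^T\cdot h]$, and reading off $f=a_0x^4+a_4y^4$ or $f=y^3(a_3x+a_4y)$; the uniqueness claim likewise rests on $\ker M_f$ being one-dimensional. The only cosmetic difference is that you deduce the final "degenerate $\Rightarrow (1^31)$ or $(1^4)$" claim from $I(f)=0$ via $\Disc=(4I^3-J^2)/27$, whereas the paper computes $\Disc(a_0x^4+a_4y^4)=256a_0^3a_4^3$ directly — both are fine.
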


This is a classical result of invariant theory:
for the equivalence part over a field of characteristic zero, 
see \cite[Theorem 5.3]{KR84}.
For the convenience of the reader, we give a proof.

\begin{proof}
	Throughout, we work not over $\F_p$ but over its algebraic closure $\overline{\F}_p$.
	
	First, we assume that $f$ can be represented as $l_0^4 + l_1^4$.
	If $l_0, l_1$ are linearly independent,
	by changing coordinates we may assume that $l_0(x,y) = x$ and $l_1(x,y) = y$.
	Then since
	\[
		M_{x^4 + y^4} = \begin{pmatrix}
		1 & 0 & 0 \\
		0 & 0 & 0 \\
		0 & 0 & 1
		\end{pmatrix},
	\]
	we have $J(x^4 + y^4) = 432 \det M_{x^4 + y^4} = 0$.
	Similarly, if $l_0, l_1$ are linearly dependent, then we may assume that $f = x^4$, and 
	if $f$ is of type $(1^31)$ we may assume that $f = x^3y$.
	The corresponding matrices are 
	\[
		M_{x^4} = \begin{pmatrix}
		1 & 0 & 0 \\
		0 & 0 & 0 \\
		0 & 0 & 0
		\end{pmatrix} \mbox{ and } M_{x^3 y} = \begin{pmatrix}
		0 & 1/4 & 0 \\
		1/4 & 0 & 0 \\
		0 & 0 & 0
		\end{pmatrix},
	\]
	and in each case we have $J(f) = 0$.
	
	Conversely, if $J(f) = 0$, we must have
	\[
		\begin{pmatrix} u_0 & u_1 & u_2 \end{pmatrix} M_f = \bm{0}, \quad
		M_f \begin{pmatrix} u_0 \\ u_1 \\ u_2 \end{pmatrix} = \bm{0}
	\]
	for some nonzero vector $(u_0, u_1, u_2)$, and writing $u = u(x,y) = u_0x^2 + u_1xy + u_2y^2$, this is seen to be equivalent to the condition
	that the map $q \mapsto [f, uq]$ be identically zero as $q$ ranges over binary quadratic forms.
		
	If $u(x,y)$ is the square of a linear form,
	by changing coordinates we may assume that $u(x,y) = x^2$, 
	and we have
	\begin{align*}
		a_0 = \frac{a_1}{4} = \frac{a_2}{6} = 0.
	\end{align*}
	This says that $f(x,y) = a_3xy^3 + a_4y^4$ has a triple factor $y$;
	in other words, $f$ is of the form $(1^31)$ or $(1^4)$.
	These are degenerate.
	
	Otherwise, $u(x,y) = u_0x^2 + u_1xy + u_2y^2$ is a product of two non-parallel linear forms.
	By changing coordinates we may assume that $u(x,y) = xy$, and we obtain that 
	$a_1 = a_2 = a_3 = 0$, and conclude that
	$f(x,y) = a_0x^4 + a_4y^4$.
	Since $\Disc(f) = 256 a_0^3a_4^3$, if $f$ is degenerate then either $a_0$ or $a_4$ is zero, and thus $f$ is of the type $(1^4)$.
	
	The claim about linear independence is clear, so we conclude by proving the uniqueness claim. Assume that
	\[
	f = x^4 + y^4 = (c_0x + c_1y)^4 + (d_0x + d_1y)^4
	\]
	where $c_0x + c_1y, d_0x + d_1y$ are linearly independent over $\overline{\F}_p$.
	From the former expression, we compute that $u(x,y) = xy$ up to a constant.
	From the latter expression, we may compute that $u(x,y) = (c_1x - c_0y) (d_1x - d_0y)$ up to a constant.
	This shows that the pair of points $\{[c_0 : c_1], [d_0: d_1]\}$ coincides with $\{[1:0], [0:1]\}$ 
	up to permutation.
	We may assume that $c_1 = d_0 = 0$, then we find $c_0^4 = d_1^4 = 1$.
	This shows the desired uniqueness.
\end{proof}

The above lemma considers $f$ over the algebraically closed field $\overline{\F}_p$.
What if we consider it over $\F_p$?
First we treat the case $J(f) = 0 $ and $\Disc(f) \neq 0$,
which we call the \textit{semi-degenerate case}.
By the uniqueness of the pair $\{l_0^4, l_1^4\}$,
there are two possibilities considering over $\F_p$:

\begin{enumerate}[{(i)}]
	\item The polynomials $l_0^4$, $l_1^4$ are defined over $\F_p$.
	\item The polynomials $l_0^4$, $l_1^4$ are defined over $\F_{p^2}$ and are Galois conjugate.
\end{enumerate}

The former case means that there are linear forms $l_0', l_1'$ over $\F_p$ and nonzero constants $a, b \in \F_p$ so that
\[
	f(x,y) = al_0'(x,y)^4 + bl_1'(x,y)^4.
\]
In the latter case, $l_0^4$ and $l_1^4$ are Galois conjugates, hence we can write
\begin{align*}
	f(x,y) &= (a + \sqrt{\beta} b)(l_0'' + \sqrt{\beta} l_1'')^4 + (a - \sqrt{\beta} b)(l_0'' - \sqrt{\beta} l_1'')^4,
\end{align*}
where $a, b$ are constants in $\F_p$ with $(a,b) \neq (0,0)$, $l_0'', l_1''$ are linear forms over $\F_p$, and $\beta\in\F_p^\times$ is any quadratic nonresidue.
By a linear change of coordinates, we may assume
$l_0' = x, l_1'= y$
and
$l_0'' = x, l_1''= y$.

We first consider the case $f(x,y) = ax^4 + by^4$.
The catalecticant matrix is
\[
	M_f = \begin{pmatrix}
		a & 0 & 0 \\
		0 & 0 & 0 \\
		0 & 0 & b
	\end{pmatrix},	
\]
and the defining equation of $\xb$ is
\[
	\begin{pmatrix}
		t_0 & t_1 & t_2
	\end{pmatrix}
	\begin{pmatrix}
		a & 0 & 0 \\
		0 & 0 & 0 \\
		0 & 0 & b
	\end{pmatrix}
	\begin{pmatrix}
		t_0 \\ t_1 \\ t_2
	\end{pmatrix}
	= at_0^2 + bt_2^2 = 0.
\]
By this equation, we can see that
\begin{itemize}
	\item the singular point of $\xb$ is described as $t_0 = t_2 = 0$, that is, $[0:1:0]$, and
	\item $\xb$ splits if and only if the discriminant (as a quadratic form of $t_0$ and $t_2$)
	$-4ab$ is a square in $\F_p^{\times}$.
\end{itemize}
To restate this condition,
we introduce a covariant $\mathrm{He}_f(x,y)$.
It is defined as
\begin{equation}\label{eq:def_he}
	\mathrm{He}_f(x,y) = -\det \begin{pmatrix}
		f_{xx} & f_{xy} \\
		f_{xy} & f_{yy}
	\end{pmatrix}.
\end{equation}
In the present case, it is computed as 
\begin{equation}\label{eq:he_def}
	\mathrm{He}_f(x,y) = -\det \begin{pmatrix}
		12 ax^2 & 0 \\
		0 & 12 by^2
	\end{pmatrix} = -4ab \cdot (6xy)^2,
\end{equation}
thus the above statement about the splitting of $\xb$ is equivalent to
\begin{itemize}
	\item $\xb$ splits if and only if 
	$\mathrm{He}_f(x,y) = -4ab \cdot (6xy)^2$ is a square of a quadratic form over $\F_p$.
\end{itemize}

Next we consider the case 
\begin{align*}
	f(x,y) &= (a + \sqrt{\beta} b)(x + \sqrt{\beta} y)^4 + (a - \sqrt{\beta} b)(x - \sqrt{\beta} y)^4 \\
	&= 2ax^4 + 8\beta bx^3y + 12\beta a x^2y^2  + 8 \beta^2 b xy^3 + 2\beta^2 a  y^4 \\
	&= 2a (x^2 + \beta y^2)^2 + 4 \beta b (x^2 + \beta y^2) \cdot 2xy + 2\beta a (2xy)^2.
\end{align*}
Its catalecticant matrix is
\[
	M_f = \begin{pmatrix}
		2a & 2\beta b & 2\beta a \\
		2\beta b & 2\beta a & 2\beta^2 b \\
		2\beta a & 2\beta^2 b & 2\beta^2 a
	\end{pmatrix},	
\]
and the defining equation of $\xb$ is
\[
	\begin{aligned}
	& \: \begin{pmatrix}
		t_0 & t_1 & t_2
	\end{pmatrix}
	\begin{pmatrix}
		2a & 2\beta b & 2\beta a \\
		2\beta b & 2\beta a & 2\beta^2 b \\
		2\beta a & 2\beta^2 b & 2\beta^2 a
	\end{pmatrix}
	\begin{pmatrix}
		t_0 \\ t_1 \\ t_2
	\end{pmatrix} 
	= 2a(t_0 + \beta t_2)^2 + 4 \beta b t_1(t_0 + \beta t_2) + 2\beta a t_1^2 = 0.
	\end{aligned}
\]
This can be considered as a quadratic form in two variables $t_0 + \beta t_2$ and $t_1$.
By this equation, we can see that
\begin{itemize}
	\item the singular point of $\xb$ is described as $t_0 + \beta t_2 = t_1 = 0$, that is, 
	$[\beta:0:-1]$, and
	\item $\xb$ splits if and only if 
	the discriminant (as a quadratic form in $t_0 + \beta t_2$ and $t_1$)
	$-4^2\beta (a^2-\beta b^2)$ is a square in $\F_p^{\times}$. 
	\end{itemize}

The covariant \yicmtwo{$\mathrm{He}_f$} in this case is
\begin{equation}\label{eq:he2}
	\mathrm{He}_f(x,y) = -4^2\beta (a^2 - \beta b^2) \cdot (6 (x^2 - \beta y^2))^2.
\end{equation}
Thus the latter statement is equivalent to
\begin{itemize}
	\item $\xb$ splits if and only if $\mathrm{He}_f(x,y)
	 = -4^2 \beta (a^2 - \beta b^2) \cdot (6 (x^2 - \beta y^2))^2$ is 
	a square of a quadratic form over $\F_p$.
\end{itemize}

By the above calculation, we have:

\begin{lem}
	For a semi-degenerate binary quartic form $f$, 
	the degenerate quadric $\xb$ splits into two $\F_p$-rational lines 
	if and only if 
	$\mathrm{He}_f(x,y)$ is the square of a binary quadratic form over $\F_p$.
\end{lem}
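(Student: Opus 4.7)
The plan is to exploit $\GL_2(\F_p)$-equivariance to reduce to the two canonical normal forms already analyzed in the text, and then simply read off the conclusion from the explicit computations carried out there.

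First I would verify that both sides of the asserted equivalence are $\GL_2(\F_p)$-invariant properties of $f$. On the left, the morphism $\psib$ is $\GL_2$-equivariant, so (as remarked after \eqref{eq:xc}) the scheme $\xb$ is isomorphic to $X_{2^2}^{g\cdot f}$ as an $\F_p$-scheme for every $g\in\GL_2(\F_p)$; in particular its decomposition into $\F_p$-rational components is unchanged. On the right, the Hessian covariant $\mathrm{He}_f$ defined in \eqref{eq:def_he} satisfies the standard transformation law $\mathrm{He}_{g\cdot f}(x,y) = (\det g)^{2}\,\mathrm{He}_f(ax+cy,bx+dy)$, which follows from the chain rule applied to the Hessian matrix of $f$. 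Consequently $\mathrm{He}_f$ is a square of a binary quadratic form over $\F_p$ if and only if $\mathrm{He}_{g\cdot f}$ is.

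Next, I would invoke Lemma \ref{lem:WaringDecomp}: since $f$ is semi-degenerate, $J(f)=0$ and $\Disc(f)\neq 0$, so $f = l_0^4 + l_1^4$ over $\overline{\F_p}$ with $l_0,l_1$ linearly independent. The uniqueness of the pair $\{l_0^4,l_1^4\}$ and Galois descent force exactly the two alternatives (i) and (ii) already isolated in the text. A suitable $\GL_2(\F_p)$-change of variables then brings $f$ to either $ax^4+by^4$ (case (i)) or the form in case (ii) with $l_0''=x$, $l_1''=y$. By the equivariance just established, it is enough to prove the lemma for these two normal forms.

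For each normal form, the computation is already done: in case (i), the equation of $\xb$ is $at_0^2+bt_2^2=0$, which splits into two $\F_p$-lines exactly when $-4ab$ is a nonzero square in $\F_p$, and formula \eqref{eq:he_def} gives $\mathrm{He}_f = -4ab\cdot(6xy)^2$; in case (ii), the equation is a quadratic form in $(t_0+\beta t_2, t_1)$ with discriminant $-16\beta(a^2-\beta b^2)$, and \eqref{eq:he2} shows $\mathrm{He}_f = -16\beta(a^2-\beta b^2)\cdot(6(x^2-\beta y^2))^2$. In both cases the splitting condition coincides with the square-class of the leading scalar of $\mathrm{He}_f$, which is exactly the condition that $\mathrm{He}_f$ be the square of a binary quadratic form over $\F_p$. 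The only delicate point is the $\GL_2$-equivariance of $\mathrm{He}_f$ with the correct weight of $\det g$; this is the standard Hessian transformation law, but I would state it explicitly to ensure that the square-of-a-quadratic property genuinely descends under the reduction to normal form.
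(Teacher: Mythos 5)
Your proposal is correct and follows essentially the same route as the paper: the text reduces to the two normal forms $ax^4+by^4$ and $(a+\sqrt\beta b)(x+\sqrt\beta y)^4+(a-\sqrt\beta b)(x-\sqrt\beta y)^4$ via a linear change of coordinates and reads off the splitting of $\xb$ from the explicit discriminants $-4ab$ and $-16\beta(a^2-\beta b^2)$, matched against \eqref{eq:he_def} and \eqref{eq:he2}. Your explicit check of the $\GL_2$-equivariance of $\mathrm{He}_f$ (weight $(\det g)^2$, which preserves the square-of-a-quadratic property) is a sensible addition that the paper leaves implicit.
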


The last case we have to consider is when $f$ is degenerate, $J(f) = 0$, but $f$ is not of the splitting type $(1^4)$.
By Lemma \ref{lem:WaringDecomp}, 
this corresponds to the splitting type $(1^31)$ case, and by a linear change of coordinates, we only have to treat $f = x^3y$.
The corresponding quadratic form is
\(
	t_0t_1,
\)
a split union of two lines.

We count the $\F_p$-rational points in each case as follows:

\begin{itemize}
	\item {Generic case: $J(f) \neq 0.$}
	Then $\xb$ is \yicmtwo{a smooth conic}. As it has at least one $\F_p$-rational point, for example by the Chevalley-Warning theorem,
	it is isomorphic to $\Projsp^1$.
	Hence $\#\xb(\F_p) = p+1$.
	\item {$J(f)=0$ and split case:}
	Two lines intersect at a point in $\Projsp^2$. 
	Hence $\#\xb(\F_p) = 2(p+1)-1 = 2p+1$.
	\item {$J(f)=0$ and non-split case:}
	The only rational point is the intersection point.
	Hence $\#\xb(\F_p) = 1$.
	\item {$(1^4)$-case:}
	As the singular locus is $1$-dimensional, the quadric is a double line.
	Hence $\#\xb(\F_p) = p+1$.
\end{itemize}

Hence we have the following proposition:
\begin{prop}
	We have
	\[
\#\xb(\F_p)=		\begin{cases}
			p+1 & \mbox{$J(f) \neq 0$\yicmtwo{,} or $f$ is of the type $(1^4)$,} \\
			2p+1 & \mbox{semi-degenerate and $\mathrm{He}_f(x,y)$ is a square, or $f$ is of the type $(1^31)$,} \\
			1 & \mbox{semi-degenerate and $\mathrm{He}_f(x,y)$ is not a square.}
		\end{cases}
	\]
\end{prop}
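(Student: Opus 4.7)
The plan is to read off the answer case-by-case from the classification of $\xb$ as a plane quadric in $\Projsp(\Sym^2 \F_p^2) \cong \Projsp^2$ that has already been established in this subsection. Specifically, the work preceding the proposition shows that $\xb$ is cut out by a quadratic form whose Gram matrix is the catalecticant $M_f$, with $\det M_f = J(f)/432$, and that the singular locus has dimension $-1$, $0$, or $1$ according as $J(f) \neq 0$, $(J(f)=0$ and $f$ not of type $(1^4))$, or $f$ of type $(1^4)$. I would simply invoke this and count in each of the four sub-cases.

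First, when $J(f) \neq 0$, the quadric $\xb$ is a smooth plane conic. By Chevalley--Warning it has an $\F_p$-rational point, hence is isomorphic to $\Projsp^1_{\F_p}$ and has $p+1$ points. Second, when $f$ is of type $(1^4)$, the adjugate computation in \eqref{AdjCat} gave $\widetilde{M}_f = 0$, so $M_f$ has rank $1$ and the quadratic form is (up to a nonzero scalar) the square of a linear form; thus $\xb$ is a double line, supported on a $\Projsp^1$, and again has $p+1$ rational points.

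Third, in the semi-degenerate case $J(f)=0$, $\Disc(f)\neq 0$, the quadric $\xb$ is the union of two distinct lines meeting at the unique singular point $\Ker(M_f)$, and the previous lemma shows that these two lines are defined over $\F_p$ exactly when $\mathrm{He}_f$ is the square of a binary quadratic form over $\F_p$. If they split, then $\#\xb(\F_p) = 2(p+1)-1 = 2p+1$; if they do not, then the only $\F_p$-point is the intersection point $\Ker(M_f)(\F_p)$, giving $\#\xb(\F_p)=1$. Fourth, when $f$ is of type $(1^31)$, a $\GL_2$-change of coordinates brings $f$ to $x^3 y$ (and the assertion is $\GL_2$-invariant since $\xb$ and $X_{2^2}^{gf}$ are isomorphic as varieties over $\F_p$), and one checks directly that the defining form becomes $t_0 t_1$, a split pair of lines, yielding $2p+1$ points.

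No step here is really an obstacle: the semi-degenerate normal forms $f = ax^4+by^4$ and $f = (a+\sqrt\beta b)(x+\sqrt\beta y)^4 + (a-\sqrt\beta b)(x-\sqrt\beta y)^4$ were already reduced to in the preceding paragraphs, and the explicit Hessian computations \eqref{eq:he_def} and \eqref{eq:he2} were the only delicate input, linking the field of definition of the two lines of $\xb$ to the condition that $\mathrm{He}_f$ be a square. The closest thing to a subtlety is to justify the reduction to these normal forms in the $(1^31)$ case in a $\GL_2$-equivariant manner so that the count for a generic type $(1^31)$ form really agrees with the count for $x^3 y$; this is immediate from the remark after \eqref{eq:xc} that $\xb^f \cong \xb^{gf}$ over $\F_p$ for $g\in \GL_2(\F_p)$, combined with the fact that every type $(1^31)$ form over $\F_p$ is $\GL_2(\F_p)$-equivalent to $x^3 y$.
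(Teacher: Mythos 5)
Your proposal is correct and follows essentially the same route as the paper: classify the plane quadric $\xb$ via the rank of the catalecticant $M_f$, use Chevalley--Warning for the smooth conic, identify the $(1^4)$ case as a double line, decide the rationality of the two lines in the semi-degenerate case via the criterion that $\mathrm{He}_f$ be a square (using the two normal forms $ax^4+by^4$ and its Galois-conjugate twist), and reduce the $(1^31)$ case to $x^3y$ by $\GL_2(\F_p)$-equivariance. The point counts $p+1$, $2p+1$, $1$, $p+1$ in the respective cases all match the paper's.
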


\subsection{Counting points on the $(1^21^2)$-scheme}
We finally count $\#\xc(\F_p)$ in this subsection.
Let us consider $\xc \subset \Projsp^1 \times \Projsp^1$.
The required equality is represented in terms of coordinates as
\begin{equation*}
	\begin{split}
	\left(a_0s_0^2 + \frac{1}{2}a_1s_0s_1 + \frac{1}{6}a_2s_1^2\right)t_0^2 
	&+ \left(\frac{1}{2}a_1s_0^2 + \frac{2}{3}a_2s_0s_1 + \frac{1}{2}a_3s_1^2\right) t_0t_1\\ 
	&+ \left(\frac{1}{6}a_2s_0^2 + \frac{1}{2}a_3s_0s_1 + a_4s_1^2\right)t_1^2 = 0.
	\end{split}
\end{equation*}
Equivalently, 
\begin{equation}\label{case3surf}
	\begin{pmatrix}
		s_0^2 & 2s_0s_1 & s_1^2
	\end{pmatrix} 
	M_f
	\begin{pmatrix}
		t_0^2 \\ 2t_0t_1 \\ t_1^2
	\end{pmatrix} = 0.
\end{equation}
We denote the $(2,2)$-form in the \yicmtwo{left} hand side \yicmtwo{of \eqref{case3surf}} by $c_f=c_f(s_0,s_1;t_0,t_1)$.
According to \cite[Section 6.1]{BH},
this $(2,2)$-form defines a genus one curve if its discriminant does not vanish.
Here, by definition, the discriminant of a $(2,2)$-form 
\begin{equation}\label{eq:22}
	c(s_0, s_1;t_0, t_1) = q_0(s_0,s_1)t_0^2 + q_1(s_0,s_1)t_0t_1 + q_2(s_0, s_1)t_1^2
\end{equation} 
is the discriminant of the binary quartic form
$
	H_c(s_0, s_1) = q_1^2 - 4 q_0q_2.
$
In our case, we have 
\[
	H_{c_f}(s_0, s_1) = \frac{1}{36} \mathrm{He}_f (s_0, s_1),
\]
where $\mathrm{He}_f(x,y)$ is the covariant of $f$ defined in \eqref{eq:def_he}.
As stated in \cite[Proposition 5]{Cr},
the discriminant of $\mathrm{He}_f$ is equal to $2^{12}3^6J(f)^2\Disc(f)$
(here, the covariant $g_4$ in \cite{Cr} is equal to $1/3 \cdot \mathrm{He}_f$).
Hence, if $J(f) \neq 0$ and $\Disc(f) \neq 0$, 
the equation \eqref{case3surf} defines a genus one curve in $\Projsp^1 \times \Projsp^1$.

\subsubsection{Generic case.}
Let us assume that $J(f) \neq 0$ and $\Disc(f) \neq 0$.
We refer to this as the generic case.
By the Hasse--Weil bound $\xc$ has at least one rational point, 
and hence it is isomorphic to its Jacobian variety $\Jac(\xc)$ as curves over $\F_p$.
By Bhargava and Ho \cite[Section 6.1 (61)]{BH}, $\Jac(\xc)$ is given by an affine equation
\[
	y^2 + 216\delta_3(c_f)y = x^3 + 9\delta_2(c_f)x^2 + 27(\delta_2(c_f)^2 - \delta_4(c_f))x.
\]
Here, for a $(2,2)$-form $c=c(s_0, s_1;t_0, t_1)$ as in \eqref{eq:22}
with
\begin{align*}
	q_0(s_0, s_1) &= a_{00}s_0^2 + a_{01}s_0s_1 + a_{02}s_1^2, \\
	q_1(s_0, s_1) &= a_{10}s_0^2 + a_{11}s_0s_1 + a_{12}s_1^2, \\
	q_2(s_0, s_1) &= a_{20}s_0^2 + a_{21}s_0s_1 + a_{22}s_1^2, 
\end{align*}
its relative invariants $\delta_i(c) \; (i = 2, 3, 4)$ are defined by
\begin{align*}
	\delta_2(c) &= a_{11}^2 - 4a_{10}a_{12} + 8a_{02}a_{20} - 4a_{01}a_{21} + 8a_{00}a_{22}, \\
	\delta_3(c) &= -\det \left| \begin{matrix}
		a_{00} & a_{01} & a_{02} \\
		a_{10} & a_{11} & a_{12} \\
		a_{20} & a_{21} & a_{22}
	\end{matrix} \right|, \\
	\delta_4(c) &= I(H_c).
\end{align*}
In our case, 
\begin{align*}
	q_0(s_0, s_1) = \frac{1}{12} f_{xx}(s_0, s_1), \quad
	q_1(s_0, s_1) = \frac{1}{6} f_{xy}(s_0, s_1), \quad
	q_2(s_0, s_1) = \frac{1}{12} f_{yy}(s_0, s_1)
\end{align*}
and we have
\begin{align*}
	\delta_2(c_f) = \frac{2}{3}I(f), \quad
	\delta_3(c_f) = -\frac{1}{108}J(f), \quad
	\delta_4(c_f) = \frac{1}{9}I(f)^2.
\end{align*}
Thus $\Jac(\xc)$ is defined by
\[
	y^2 - 2J(f)y = x^3 + 6I(f)x^2 + 9I(f)^2x.
\]
We immediately see that this is actually isomorphic to 
\[
	E'_f \colon y^2 = x^3 -3I(f)x^2 + J(f)^2.
\]
Hence we have
\begin{prop}
	When $J(f) \neq 0$ and $\Disc(f) \neq 0$,
	we have
	\[
		\#\xc(\F_p) = \# E'_f(\F_p).
	\]
\end{prop}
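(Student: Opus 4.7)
\medskip

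\noindent\textbf{Proof proposal.} The plan is to combine the setup developed above with two additional ingredients: production of an $\F_p$-rational point on $\xc$, and an explicit change of variables relating the Bhargava--Ho Jacobian model to $E'_f$.

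First, since $J(f)\neq 0$ and $\Disc(f)\neq 0$, the computation of $H_{c_f}=\tfrac{1}{36}\mathrm{He}_f$ above shows that the defining $(2,2)$-form $c_f$ has nonvanishing discriminant, so $\xc$ is a smooth genus-one curve over $\F_p$. The Hasse--Weil bound then gives
$$
\#\xc(\F_p)\ \geq\ p+1-2\sqrt{p}\ =\ (\sqrt{p}-1)^2\ >\ 0
$$
for every $p>3$, furnishing a rational base point and identifying $\xc$ with $\Jac(\xc)$ over $\F_p$.

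Next I would invoke the Bhargava--Ho formula for the Jacobian of a genus-one curve cut out by a $(2,2)$-form, together with the already-computed values $\delta_2(c_f)=\tfrac{2}{3}I(f)$, $\delta_3(c_f)=-\tfrac{1}{108}J(f)$, $\delta_4(c_f)=\tfrac{1}{9}I(f)^2$, to obtain the Weierstrass model
$$
\Jac(\xc)\colon\quad y^2-2J(f)y\ =\ x^3+6I(f)x^2+9I(f)^2 x.
$$
The heart of the argument is to exhibit an explicit $\F_p$-isomorphism between this model and $E'_f\colon y^2=x^3-3I(f)x^2+J(f)^2$. I would do this by reducing each to a common short Weierstrass form: for the Jacobian, complete the square via $y\mapsto y+J(f)$ and then shift $x\mapsto x-2I(f)$; for $E'_f$, shift $x\mapsto x+I(f)$. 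A direct computation shows that both reductions yield
$$
y^2\ =\ x^3-3I(f)^2 x-2I(f)^3+J(f)^2,
$$
establishing the desired isomorphism. Combining with the previous paragraph gives $\#\xc(\F_p)=\#\Jac(\xc)(\F_p)=\#E'_f(\F_p)$.

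The main potential obstacle is the bookkeeping in this final change-of-variables step: ensuring that every coefficient transforms correctly and that the manipulations are valid over $\F_p$. The latter requires $p>3$, since we must invert $2$ to complete the square and $3$ to perform the linear shifts in $x$; this is exactly the standing hypothesis of the section. The Hasse--Weil step also implicitly uses that $\xc$ is geometrically irreducible, which follows from the smoothness of the generic $(2,2)$-curve established via the nonvanishing of $\Disc(H_{c_f})$.
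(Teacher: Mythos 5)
Your proposal is correct and follows essentially the same route as the paper: use Hasse--Weil to produce a rational point identifying $\xc$ with its Jacobian, then apply the Bhargava--Ho formula with the computed $\delta_i(c_f)$ and match the resulting Weierstrass model with $E'_f$. The only difference is that you spell out the final change of variables (both models reduce to $y^2=x^3-3I(f)^2x-2I(f)^3+J(f)^2$), which the paper leaves as ``we immediately see''; your computation checks out.
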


\subsubsection{Semi-degenerate cases}
Next we treat the case when $J(f) = 0$ and $\Disc(f) \neq 0$ (the semi-degenerate case).
Recall that in this case, over $\overline{\F}_p$, we can write
\[
	f(x,y) = l_0(x,y)^4 + l_1(x,y)^4.
\]
Since we have
\[
	[f(x,y), (ax + by)^4] = f(a,b)
\]
for a binary quartic form $f(x,y)$, 
we obtain for each  linear form $l(x,y) = ax + by$ that
\[
		[(s_0 x + s_1y)^2 (t_0x + t_1y)^2, l(x,y)^4]
= l(s_0, s_1)^2 l(t_0, t_1)^2.
\]
Applying this calculation for $l_0$ and $l_1$, we find the defining equation of $\xc$ is
{\small
\begin{align}
	c(s; t) &= l_0(s_0, s_1)^2 l_0(t_0,t_1)^2 + l_1(s_0, s_1)^2 l_1(t_0,t_1)^2 \label{eq:c_eq_3} \\
	&= ( l_0(s_0, s_1) l_0(t_0,t_1) - \sqrt{-1} l_1(s_0, s_1) l_1(t_0,t_1)) 
	( l_0(s_0, s_1) l_0(t_0,t_1) + \sqrt{-1}l_1(s_0, s_1) l_1(t_0,t_1)). \nonumber
\end{align}
}
Hence geometrically $\xc$ consists of two distinct  $(1,1)$-divisors defined by $l_0(s_0, s_1) l_0(t_0,t_1) \pm\sqrt{-1} l_1(s_0, s_1) l_1(t_0,t_1)$.
By a linear change of coordinates over $\overline{\F}_p$, these divisors are rewritten as $s_0t_0 - s_1t_1$ and $s_0t_0 + s_1t_1$,
and we see that these divisors are irreducible and smooth. 
There are two kinds of splitting we have to consider:

\begin{enumerate}[{(i)}]
	\item Do two irreducible divisors split over $\F_p$?
	\item Do the two intersection points of these divisors split over $\F_p$?
\end{enumerate}

For the first question, similarly to $\xb$, it is enough to consider two cases 
\[
	f(x,y) = ax^4 + by^4
\]
and 
\[
	f(x,y) = (a + \sqrt{\beta} b)(x + \sqrt{\beta} y)^4 + (a - \sqrt{\beta} b)(x - \sqrt{\beta} y)^4.
\]
First we assume that \(f(x,y) = ax^4 + by^4.\)
Then we have
\[
		c(s; t) = as_0^2t_0^2 + bs_1^2t_1^2 
		= a \left( s_0t_0 + \frac{\sqrt{-ab}}{a} s_1t_1 \right) 
		\left( s_0t_0 - \frac{\sqrt{-ab}}{a} s_1t_1 \right)
\]
and it splits if and only if $-ab \in \F_p^{\times 2}$.
As we computed in \eqref{eq:he_def}, we have
$\mathrm{He}_f(x,y) = -4ab \cdot (6xy)^2$. Hence we have
\begin{itemize}
	\item For \yicmtwo{the case $f(x,y) = ax^4 + by^4$}, 
	the defining equation $c(s,t)$ of $\xc$ splits into two bilinear forms over $\F_p$
	if and only if $\mathrm{He}_f(x,y)$ is a square of a quadratic form over $\F_p$.
\end{itemize}
Next we assume that 
\[
	\begin{aligned}
		f(x,y) &= (a + \sqrt{\beta} b)(x + \sqrt{\beta} y)^4 + (a - \sqrt{\beta} b)(x - \sqrt{\beta} y)^4 \\
		&= 2a (x^2 + \beta y^2)^2 + 4 \beta b (x^2 + \beta y^2) \cdot 2xy + 2\beta a (2xy)^2.
	\end{aligned}
\]
Then using \eqref{eq:c_eq_3}, we have 
\[
	\begin{aligned}
	c(s; t) &= (a + \sqrt{\beta} b)(s_0 + \sqrt{\beta} s_1)^2(t_0 + \sqrt{\beta} t_1)^2 
	+ (a - \sqrt{\beta} b)(s_0 - \sqrt{\beta} s_1)^2(t_0 - \sqrt{\beta} t_1)^2 \\
	&= 2a (s_0t_0 + \beta s_1t_1)^2 + 4 \beta b (s_0t_0 + \beta s_1t_1) (s_1 t_0 + s_0t_1)
	+ 2 \beta a(s_1t_0 + s_0t_1)^2.
	\end{aligned}
\]
By the last expression, we find that as a quadratic form 
in $s_0t_0 + \beta s_1t_1$ and $s_1t_0 + s_0t_1$, 
$c(s;t)$ splits if and only if {$-4^2\beta (a^2-\beta b^2) \in \F_p^{\times 2}$}.
By \eqref{eq:he2}, again we have
\begin{itemize}
	\item For \yicmtwo{the case $f(x,y) = (a + \sqrt{\beta} b)(x + \sqrt{\beta} y)^4 + (a - \sqrt{\beta} b)(x - \sqrt{\beta} y)^4$}, 
	the defining equation $c(s,t)$ of $\xc$ splits into two bilinear forms over $\F_p$
	if and only if $\mathrm{He}_f(x,y)$ is a square of a quadratic form over $\F_p$.
\end{itemize}

Next we consider the second question.
For the case $f(x,y) = ax^4 + by^4$, 
\yicmtwo{regardless of whether $-ab \in \F_p^{\times 2}$ or not,} 
the intersection points are defined by the equations
\[
	s_0t_0 = s_1t_1 = 0.
\]
We easily find that the intersection points are the two points $([1:0], [0:1])$ and $([0:1], [1:0]) \in \Projsp^1 \times \Projsp^1$.
For the case $f(x,y) = (a + \sqrt{\beta} b)(x + \sqrt{\beta} y)^4 + (a - \sqrt{\beta} b)(x - \sqrt{\beta} y)^4$, 
the defining equations of the intersection points of the two divisors are written as
\[
	 s_0t_0 + \beta s_1t_1 = s_0t_1 + s_1t_0 = 0 
	\iff \begin{pmatrix} s_0 & \beta s_1 \\ s_1 & s_0 \end{pmatrix}  \begin{pmatrix} t_0 \\ t_1 \end{pmatrix} = \begin{pmatrix} 0 \\ 0 \end{pmatrix}.
\]
Since $\det  \begin{pmatrix} s_0 & \beta s_1 \\ s_1 & s_0 \end{pmatrix}  = s_0^2 - \beta s_1^2 = 0$ has no non-trivial solutions over $\F_p$, 
we find that there are no $\F_p$-rational intersection points of the two (1,1)-divisors.
Concluding then, the number of $\F_p$-rational intersection points of two divisors is two or zero,
according as whether $l_0$ and $l_1$ are defined over $\F_p$ or not.

Thus in the semi-degenerate case, we divide it into four subcases:
\begin{prop}\label{pp:semideg}
For semi-degenerate cases, we calculate that
\begin{enumerate}[{(i)}]
	\item Two split divisors intersecting in two rational points: $\#\xc(\F_p) = 2p$.
	\item Two split divisors intersecting in two non-rational points: $\#\xc(\F_p) = 2p+2$.
	\item Two non-split divisors intersecting in two rational points: $\#\xc(\F_p) = 2$.
	\item Two non-split divisors intersecting in two non-rational points: $\#\xc(\F_p) = 0$.
\end{enumerate}
They exactly correspond to 
\begin{enumerate}[{(i)}]
	\item $l_0$ and $l_1$ are defined over $\F_p$ 
	and $\mathrm{He}_f(x,y)$ is a square of a quadratic form over $\F_p$.
	\item $l_0$ and $l_1$ are not defined over $\F_p$ 
	and $\mathrm{He}_f(x,y)$ is a square of a quadratic form over $\F_p$.
	\item $l_0$ and $l_1$ are defined over $\F_p$ 
	and $\mathrm{He}_f(x,y)$ is not a square of a quadratic form over $\F_p$.
	\item $l_0$ and $l_1$ are not defined over $\F_p$ 
	and $\mathrm{He}_f(x,y)$ is not a square of a quadratic form over $\F_p$.
\end{enumerate}
\end{prop}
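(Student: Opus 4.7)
The plan is to combine the geometric decomposition of $\xc_{\overline{\F}_p}$ already carried out with a simple inclusion–exclusion on $\F_p$-points and a Galois-descent argument. The preceding analysis writes $\xc_{\overline{\F}_p}$ as the union of two distinct smooth $(1,1)$-divisors $D_1, D_2 \subset \Projsp^1 \times \Projsp^1$ meeting transversally in exactly two points, coming from the factorization of $c(s;t)$ associated with $f = l_0^4 + l_1^4$. The two dichotomies already identified---whether $\mathrm{He}_f$ is a square of a binary quadratic form over $\F_p$, and whether $l_0$ and $l_1$ can be taken defined over $\F_p$---control, respectively, whether each individual $D_i$ is defined over $\F_p$ (the ``split'' condition) and whether the two intersection points of $D_1$ and $D_2$ are $\F_p$-rational. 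These two conditions are independent, yielding the four subcases listed in the statement.

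In subcases (i) and (ii), each $D_i$ is defined over $\F_p$ individually. A smooth $(1,1)$-divisor in $\Projsp^1 \times \Projsp^1$ is the graph of an isomorphism $\Projsp^1 \to \Projsp^1$, so has exactly $p+1$ rational points. Inclusion–exclusion then gives
\[
	\# \xc(\F_p) = \# D_1(\F_p) + \# D_2(\F_p) - \#(D_1 \cap D_2)(\F_p) = 2(p+1) - \#(D_1 \cap D_2)(\F_p),
\]
which equals $2p$ in (i) (two rational intersection points) and $2p+2$ in (ii) (no rational intersection point).

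In subcases (iii) and (iv), the divisors $D_1$ and $D_2$ are interchanged by the nontrivial element $\sigma$ of $\mathrm{Gal}(\F_{p^2}/\F_p)$. If $P$ were an $\F_p$-rational point of $\xc$ lying on $D_1$ but not on $D_2$, then $P = \sigma(P)$ would lie on $\sigma(D_1) = D_2$, a contradiction. So every $\F_p$-rational point of $\xc$ must lie in $D_1 \cap D_2$, giving $2$ in (iii) and $0$ in (iv). The only residual check is the smoothness and $(1,1)$-nature of each $D_i$, which is immediate after the change of coordinates over $\overline{\F}_p$ bringing the pair into the normal form $s_0 t_0 \pm s_1 t_1$ that has already been recorded. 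Beyond this, the argument is a direct assembly of facts already established, and no serious obstacle arises.
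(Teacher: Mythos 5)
Your proposal is correct and follows essentially the same route as the paper: both rely on the factorization of the $(2,2)$-form $c(s;t)$ into two smooth irreducible $(1,1)$-divisors over $\overline{\F}_p$, the identification of the splitting of the divisors with the condition on $\mathrm{He}_f$ and of the rationality of the two intersection points with the rationality of $l_0,l_1$, and then inclusion--exclusion (split case) plus the observation that Frobenius interchanges the conjugate divisors (non-split case) to get the counts $2p$, $2p+2$, $2$, $0$. The only difference is cosmetic: you phrase the non-split count via an abstract Galois-descent argument, while the paper works it out in the two explicit normal forms $ax^4+by^4$ and its quadratic twist.
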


To relate this proposition to Theorem \ref{thm:ff-disc},
here we note the relation between these conditions and the invariant $I(f)$.
First we consider the case $f = ax^4 + by^4$ corresponding to the cases (i) and (iii), 
or equivalently, where $l_0$ and $l_1$ are defined over $\F_p$.  In this case, we compute
\[
	I(f) = 12ab.
\]
Hence, $\mathrm{He}_f(x,y) = -4ab \cdot (6xy)^2$ is a square of a quadratic form over $\F_p$
if and only if $\left( \frac{-3I(f)}{p}\right) = 1$.

In the remaining case $f = (a + \sqrt{\beta} b)(x + \sqrt{\beta}y)^4 + (a - \sqrt{\beta} b)(x - \sqrt{\beta}y)^4$, 
corresponding to the case (ii) and (iv), we have
\[
	I(f) = 192 \beta^2 (a^2 - \beta b^2).
\]
Thus $\mathrm{He}_f(x,y)$ as computed in \eqref{eq:he2} is a square of a quadratic form over $\F_p$
if and only if $\left( \frac{-3I(f)}{p}\right) = -1$.
In summary, we have the following:

\begin{lem}\label{lm:IvDescription}
	For a semidegenerate $f \in V$, 
	$\left( \frac{-3I(f)}{p}\right) = 1$ if and only if case (i) or (iv) holds
	in Proposition \ref{pp:semideg}.
\end{lem}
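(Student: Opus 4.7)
The plan is essentially to combine the two invariant computations $I(f) = 12ab$ and $I(f) = 192\beta^2(a^2 - \beta b^2)$ carried out in the paragraphs immediately preceding the lemma with the criteria for $\mathrm{He}_f$ to be a square of a binary quadratic form over $\F_p$ that were established in the analysis of $\xb$ and reused for $\xc$. By Lemma \ref{lem:WaringDecomp} together with the dichotomy discussed just before Proposition \ref{pp:semideg}, a semidegenerate $f$ is, after an $\F_p$-linear change of coordinates, either in the split normal form $f = ax^4 + by^4$ with $a,b \in \F_p^\times$ (so $l_0, l_1$ are defined over $\F_p$) or in the inert normal form $f = (a+\sqrt{\beta}b)(x+\sqrt{\beta}y)^4 + (a-\sqrt{\beta}b)(x-\sqrt{\beta}y)^4$ with $(a,b) \neq (0,0)$ and $\beta$ a fixed nonresidue (so $l_0, l_1$ are not defined over $\F_p$). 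Since each side of the asserted equivalence is invariant under such a change of coordinates, it suffices to verify the lemma for these two normal forms.

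In the split case I strip the square factor $36$ from $-3I(f) = -36ab$ to get $\left(\frac{-3I(f)}{p}\right) = \left(\frac{-ab}{p}\right)$, and this is exactly the condition for $\mathrm{He}_f = -4ab \cdot (6xy)^2$ to be a square of a quadratic form over $\F_p$. Hence in this subcase $\left(\frac{-3I(f)}{p}\right) = 1$ if and only if $\mathrm{He}_f$ is a square, which is precisely case (i) of Proposition \ref{pp:semideg}.

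In the inert case I strip the square factor $576\beta^2$ from $-3I(f) = -576\beta^2(a^2 - \beta b^2)$ to get $\left(\frac{-3I(f)}{p}\right) = \left(\frac{-(a^2 - \beta b^2)}{p}\right)$; since $\beta$ is a nonresidue this equals $-\left(\frac{-\beta(a^2 - \beta b^2)}{p}\right)$. The condition that $\mathrm{He}_f = -16\beta(a^2 - \beta b^2) \cdot (6(x^2 - \beta y^2))^2$ be a square of a quadratic form over $\F_p$ is precisely $\left(\frac{-\beta(a^2 - \beta b^2)}{p}\right) = 1$. Combining these two observations, in this subcase $\left(\frac{-3I(f)}{p}\right) = 1$ if and only if $\mathrm{He}_f$ is \emph{not} a square, which is precisely case (iv) of Proposition \ref{pp:semideg}. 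Merging the two subcases gives the lemma.

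The only real obstacle is the extra sign flip that occurs in the inert case because $\beta$ is a nonresidue; once that is tracked carefully in the Legendre symbol, the statement is a formal consequence of the normal form reduction and the invariant computations already in hand, so I expect no additional algebraic input to be required.
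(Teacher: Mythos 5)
Your proof is correct and follows essentially the same route as the paper's: reduce to the two normal forms $ax^4+by^4$ and $(a+\sqrt{\beta}b)(x+\sqrt{\beta}y)^4+(a-\sqrt{\beta}b)(x-\sqrt{\beta}y)^4$, compute $I(f)=12ab$ and $I(f)=192\beta^2(a^2-\beta b^2)$ respectively, and compare $\left(\frac{-3I(f)}{p}\right)$ with the criterion for $\mathrm{He}_f$ to be a square of a quadratic form over $\F_p$. The sign flip coming from $\left(\frac{\beta}{p}\right)=-1$ in the inert case is exactly how the paper, like you, pairs case (iv) with case (i).
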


\subsubsection{$(1^211)$- and $(1^22)$-cases}
We may assume that $f(x,y) = x^2q(x,y)$ for a quadratic form $q(x,y)$ over $\F_p$ with
$\disc(q) \neq 0$ and $q(0,1) \neq 0$.
Then we have
\[
	c(s;t) = a_0s_0^2t_0^2 + \frac{1}{2}a_1s_0t_0(s_1t_0 + s_0t_1) + \frac{1}{6}a_2(s_0^2t_1^2 + 4s_0s_1t_0t_1 + s_1^2t_0^2).
\]
The intersection with the $(1,0)$-divisor $s_0 = 0$ 
or the $(0,1)$-divisor $t_0 = 0$ is a singular point $([0:1], [0:1])$.
Localizing with $s_0 \neq 0$ and $t_0 \neq 0$, we have a quadratic polynomial
\[
	\overline{c}(s;t) = a_0 + \frac{1}{2}a_1(s_1 + t_1) + \frac{1}{6}a_2(t_1^2 + 4s_1t_1 + s_1^2).
\]
Homogenizing it, we have a ternary quadratic form
\[
	\widetilde{c}(s,t,u) = a_0u^2 + \frac{1}{2}a_1(s_1 + t_1)u + \frac{1}{6}a_2(t_1^2 + 4s_1t_1 + s_1^2).
\]
Its discriminant is
\[
	\disc(\widetilde{c}) = \frac{1}{48}a_2\disc(q),
\]
and by assumption this quadratic form is non-singular.
Thus it defines a non-singular quadric in $\Projsp^2$, and there are
$p+1$ rational points.
The two points where the quadric meets the line $u=0$ are
\[
	[s_1:t_1:u]=[1:-2+\sqrt{3}:0], [1:-2-\sqrt{3}:0],
\]
and the number of $\F_p$-rational points is $1+\left( \frac{3}{p} \right)$.
Hence we have
\[
	\#\xc(\F_p) = 1 + (p+1) - \left(1+\left( \frac{3}{p} \right)\right) 	= p + 1 - \left( \frac{3}{p} \right).
\]
\begin{prop}
	For a binary quartic $f$ of $(1^211)$- or $(1^22)$-type, we have
	\[
		\#\xc(\F_p) = p + 1 - \left( \frac{3}{p} \right).
	\]
\end{prop}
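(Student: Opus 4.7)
My plan is to leverage $\gl_2$-equivariance to normalize $f$ and then split the count on $\xc$ into a boundary piece and a chart piece, the latter being identifiable with a smooth plane conic. For both splitting types $(1^211)$ and $(1^22)$, the form $f$ has a squared linear factor over $\F_p$, which I can move to $x^2$ by an element of $\gl_2(\F_p)$. Thus I may take $f(x,y) = x^2 q(x,y)$ with $q(x,y) = a_0 x^2 + a_1 xy + a_2 y^2$; the defining conditions of these two splitting types become $\disc(q) \neq 0$ (so the remaining quadratic factor is squarefree) and $a_2 = q(0,1) \neq 0$ (so that the multiplicity of $x$ is exactly two, ruling out the $(1^31)$ and $(1^4)$ cases). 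Substituting $a_3 = a_4 = 0$ into \eqref{case3surf} yields the explicit $(2,2)$-equation of $\xc \subset \Projsp^1 \times \Projsp^1$.

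The first step is to locate the $\F_p$-points of $\xc$ lying off the affine chart $\{s_0 t_0 \neq 0\}$. Restricting the equation to $s_0 = 0$ collapses every monomial except one, leaving $\tfrac{1}{6} a_2 t_0^2 = 0$, which forces $t_0 = 0$ since $a_2 \neq 0$; the restriction to $t_0 = 0$ is symmetric. Thus the unique boundary $\F_p$-point of $\xc$ is $P := ([0:1],[0:1])$, contributing $1$ to the count. Next I dehomogenize by setting $s_0 = t_0 = 1$ to produce the affine equation $\overline{c}(s_1, t_1) = 0$, then projectivize by a new coordinate $u$ to obtain a ternary quadratic $\widetilde{c}(s_1, t_1, u) = 0$ in $\Projsp^2$. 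A short Gram-matrix determinant computation gives $\disc(\widetilde{c}) = \tfrac{1}{48} a_2 \disc(q)$, which is nonzero by the normalization, so $\widetilde{c}=0$ is a smooth conic and therefore has exactly $p+1$ $\F_p$-rational points by Chevalley--Warning.

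To finish, I subtract the $\F_p$-points of $\widetilde{c}$ on the line at infinity $u=0$; these are the projective roots of $s_1^2 + 4 s_1 t_1 + t_1^2$, a binary quadratic with discriminant $12$, and so their number is $1 + \left(\tfrac{3}{p}\right)$. Hence the open chart contributes $p - \left(\tfrac{3}{p}\right)$ points to $\xc(\F_p)$, and adding in $P$ gives $\#\xc(\F_p) = p + 1 - \left(\tfrac{3}{p}\right)$, as claimed. The only conceptual hazard is that the two compactifications — $\Projsp^1 \times \Projsp^1$ and $\Projsp^2$ — replace the chart $\{s_0 t_0 \neq 0\}$ with different boundaries, so one must confirm that no $\F_p$-point of $\xc$ is miscounted; but since both compactifications agree on the common open chart via the standard identification, the affine count matches on both sides and the bookkeeping closes.
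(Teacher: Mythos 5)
Your proposal is correct and follows essentially the same route as the paper: normalize to $f = x^2q(x,y)$ with $\disc(q)\neq 0$ and $q(0,1)\neq 0$, identify the single boundary point $([0:1],[0:1])$, realize the affine chart as a smooth conic with discriminant $\tfrac{1}{48}a_2\disc(q)$, and subtract the $1+\left(\frac{3}{p}\right)$ points at infinity coming from $s_1^2+4s_1t_1+t_1^2$. No gaps; the bookkeeping between the two compactifications is exactly as you describe.
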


\subsubsection{$(1^21^2)$- and $(2^2)$-cases}
According to the splitting type, 
we may assume that $f(x,y) = x^2y^2$ or $(x^2-\beta y^2)^2$ respectively
(where $\beta \in \F_p^\times$ is a quadratic non-residue).
For $f=x^2y^2$, we have
\begin{align*}
	c(s;t) &= \frac{1}{6}(s_0^2t_1^2 + 4s_0s_1t_0t_1 + s_1^2t_0^2) \\
	&= \frac{1}{6}\left( 
	s_0t_0 + ({2+\sqrt{3}})s_1t_1
	\right)
	\left( 
	s_0t_0 + ({2-\sqrt{3}})s_1t_1
	\right).
\end{align*}
Both $s_0t_0 + ({2+\sqrt{3}})s_1t_1$
and $s_0t_0 - ({2+\sqrt{3}})s_1t_1$ define 
an irreducible divisor.
Hence $\xc$ is a split union of two irreducible $(1,1)$-divisors if $\left( \frac{3}{p} \right) = 1$,
or a non-split union if $\left( \frac{3}{p} \right)=-1$.
Their intersection points are two $\F_p$-rational points $([1:0], [0:1])$ and $([0:1], [1:0]) \in \Projsp^1 \times \Projsp^1$.

For $f = (x^2-\beta y^2)^2$, we have
\begin{align*}
	c(s;t) &= (s_0t_0-\beta s_1t_1)^2 - \frac{\beta}{3}(s_0t_1 - s_1t_0)^2.
\end{align*}
It is a split union of two irreducible $(1,1)$-divisors if $\left( \frac{3}{p} \right) = -1$,
or a non-split union if $\left( \frac{3}{p} \right)=1$.
These divisors intersect on two $\overline{\F}_p$-rational points, but they are not $\F_p$-rational.

\begin{prop}
	For a binary quartic $f$ of $(1^21^2)$-type, we have
\[
		\#\xc(\F_p) = (p+1) + \left( \frac{3}{p} \right)(p-1).
\]
	For a binary quartic $f$ of $(2^2)$-type, we have
\[
		\#\xc(\F_p) = (p+1) - \left( \frac{3}{p} \right)(p+1).
\]
\end{prop}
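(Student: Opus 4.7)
My plan is to build directly on the structural facts established in the paragraphs immediately preceding the statement: the reduction, by $\GL_2$-equivariance, to the normal forms $f = x^2y^2$ and $f = (x^2-\beta y^2)^2$; the explicit factorizations of $c(s;t)$ written just above the proposition; the split-versus-non-split dichotomy governed by $\left(\frac{3}{p}\right)$; and the locations of the two geometric intersection points. What remains is to count $\F_p$-rational points on $\xc$ by inclusion-exclusion on the two $(1,1)$-divisors, using one geometric input: every irreducible $(1,1)$-divisor in $\Projsp^1 \times \Projsp^1$ is the graph of an automorphism of $\Projsp^1$ and hence is isomorphic to $\Projsp^1$, contributing $p+1$ rational points when defined over $\F_p$.

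In the split subcases the two components are defined over $\F_p$ and the count is immediate. For $(1^21^2)$-type with $\left(\frac{3}{p}\right) = 1$, the two components share the two $\F_p$-rational intersection points $([1:0],[0:1])$ and $([0:1],[1:0])$, giving $\#\xc(\F_p) = 2(p+1) - 2 = 2p$. For $(2^2)$-type with $\left(\frac{3}{p}\right) = -1$, the two components meet only at points of $\F_{p^2} \setminus \F_p$, as the intersection is cut out by $s_0t_0 = \beta s_1t_1$ and $s_0t_1 = s_1t_0$, which forces $(s_0/s_1)^2 = \beta$ and has no $\F_p$-solutions; hence $\#\xc(\F_p) = 2(p+1) = 2p+2$.

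In the non-split subcases the two components $D_1, D_2$ are Galois conjugate over $\F_{p^2}$. If $P \in D_1(\overline{\F}_p)$ is Frobenius-fixed, then $P = \sigma(P) \in \sigma(D_1) = D_2$, so $P \in D_1 \cap D_2$; every $\F_p$-rational point of $\xc$ therefore lies in this intersection. For $(1^21^2)$-type with $\left(\frac{3}{p}\right) = -1$ the intersection consists of the two $\F_p$-rational points above, contributing $2$; for $(2^2)$-type with $\left(\frac{3}{p}\right) = 1$ the intersection consists of two Galois-conjugate points and contributes $0$. Assembling the four subcounts yields exactly $(p+1) + \left(\frac{3}{p}\right)(p-1)$ and $(p+1) - \left(\frac{3}{p}\right)(p+1)$, respectively. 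The only delicate point, and the sole real obstacle, is the Frobenius-fixed-point argument in the non-split cases; everything else is bookkeeping on top of the factorizations of $c(s;t)$ already written down.
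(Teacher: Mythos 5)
Your proposal is correct and follows essentially the same route as the paper: reduce to the normal forms $x^2y^2$ and $(x^2-\beta y^2)^2$, factor $c(s;t)$ into two irreducible $(1,1)$-divisors whose field of definition is controlled by $\left(\frac{3}{p}\right)$, and count by inclusion–exclusion in the split case and by restricting to the (rational or non-rational) intersection points in the non-split case. The Frobenius-fixed-point argument you flag as the delicate step is exactly the standard justification the paper leaves implicit, and your four subcounts and their assembly match the paper's.
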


\subsubsection{$(1^31)$-case}
We only have to consider the $f = x^3y$ case.
The corresponding $(2,2)$-form is 
\[
	\frac{1}{2}s_0^2t_0t_1 + \frac{1}{2}s_0s_1t_0^2 = \frac{1}{2}s_0t_0(s_0t_1+s_1t_0).
\]
This is the union of three components: two lines $s_0 = 0$ and $t_0 = 0$, and
the graph of an isomorphism $[s_0:s_1] \mapsto [-s_0:s_1]$ of $\Projsp^1$: $s_0t_1 + s_1t_0=0$.
They are all isomorphic to $\Projsp^1$,
and intersect only at $([0:1], [0:1])$.
Thus we have $\#\xc(\F_p)= 3((p+1) - 1) + 1 = 3p+1$.

\subsubsection{$(1^4)$-case}
We only have to consider the $f = x^4$ case.
The corresponding $(2,2)$-form is $s_0^2t_0^2$,
and thus we have $\# \xc(\F_p) = 2(p+1) - 1 = 2p+1$.

\subsection{Proof of Theorem \ref{thm:ff-disc}}\label{subsec:proof-expsum}
Now we are ready to prove Theorem \ref{thm:ff-disc}.
Recall from \eqref{eq:expo_breakdown} and \eqref{eq:1_breakdown} that 
\begin{align*}
	p^5 \widehat{\Phi_p}(f) &= 1 + p(\#\xa(\F_p) + \#\xb(\F_p) - \#\xc(\F_p)) - (p^3 + 2p^2 + p + 1) \\
	&= p(\#\xa(\F_p) + \#\xb(\F_p) - \#\xc(\F_p) - (p+1)^2).
\end{align*}

We distinguish the following cases:
\begin{itemize}
\item $J(f) \neq 0$ and $\Disc(f) \neq 0$, the nondegenerate case;
\item $J(f) = 0$ and $\Disc(f) \neq 0$, the semi-degenerate case, divided into four subcases in Proposition \ref{pp:semideg};
\item $J(f) \neq 0$ and $\Disc(f) = 0$, corresponding to the splitting types $(1^2 11)$, $(1^2 2)$, $(1^2 1^2)$, $(2^2)$ by Lemma \ref{lem:WaringDecomp};
\item $J(f) = 0$ and $\Disc(f) = 0$, corresponding to the splitting types $(1^3 1)$ and $(1^4)$ by the same lemma.
\end{itemize}

In summary, we obtain the following table.
We write $\left( \frac{3}{p} \right)=\chi_{12}(p)$
since it is the unique primitive Dirichlet character modulo $12$.

\begin{center} 
\begin{table}[h]
\begin{tabular}{|c||c|c|c|c|}
	\hline
	$f$ & $\#\xa(\F_p)$ & $\#\xb(\F_p)$ & $\#\xc(\F_p)$ & $p^5 \widehat{\Phi_p}(f)$ \\
	\hline
	\hline
	$J(f) \neq 0, \Disc(f) \neq 0$	& $(p+1)^2$ & $p+1$ & $\#E'_f(\F_p)$ & $pa(E'_f)$ \\
	\hline
	semideg., (i) 	& $(p+1)^2$ & $2p+1$ & $2p$ & $p$ \\
	\hline
	semideg., (ii)	& $(p+1)^2$ & $2p+1$ & $2p+2$ & $-p$ \\
	\hline
	semideg., (iii)	& $(p+1)^2$ & $1$ & $2$ & $-p$ \\
	\hline
	semideg., (iv)	& $(p+1)^2$ & $1$ & $0$ & $p$ \\
	\hline
	$(1^211)$ 	& $(p+1)^2$ & $p+1$ & $p + 1 - \chi_{12}(p)$ & $\chi_{12}(p)p$ \\
	\hline
	$(1^22)$ 	& $(p+1)^2$ & $p+1$ & $p + 1 - \chi_{12}(p)$ & $\chi_{12}(p)p$ \\
	\hline
	$(1^21^2)$ 	& $(p+1)^2$ & $p+1$ & $(p+1)+\chi_{12}(p)(p-1)$ & $-\chi_{12}(p)p(p-1)$ \\
	\hline
	$(2^2)$ 	& $(p+1)^2$ & $p+1$ & $(p+1)-\chi_{12}(p)(p+1)$ & $\chi_{12}(p)p(p+1)$ \\
	\hline
	$(1^31)$ 	& $2p^2+2p+1$ & $2p+1$ & $3p+1$ & $p^2(p-1)$ \\
	\hline
	$(1^4)$ 	& $2p^2+2p+1$ & $p+1$ & $2p+1$ & $p^2(p-1)$\\
	\hline
\end{tabular} 
\end{table}
\end{center}

By Lemma \ref{lm:IvDescription},
any semi-degenerate $f$ is of case (i) or (iv) if and only if $\left(\frac{-3I(f)}{p}\right) = 1.$
Thus we have proved Theorem \ref{thm:ff-disc}.

\section{Box Estimate for the space of binary quartic forms}\label{sec:box_estimate}
To prove Theorems \ref{thm:ap-2selmer} and \ref{thm:ap-bqf},
we will use the framework developed in \cite{TT_leveldist}.
As this was designed to work 
essentially as a black box, we give only a brief summary. 

For squarefree $q$,
let $\Phi_q \colon V(\Z/q\Z)\rightarrow\{0,1\}$
be the indicator of the those $f\in V(\Z/q\Z)$ with $\disc(f)=0$. Then, by Poisson summation,
we have
\begin{equation}\label{eq:poisson1}
\sum_{f \in V(\Z)} \Phi_q(f) \phi(f X^{-1/6}) = \widehat{\Phi_q}(0) \widehat{\phi}(0) X^{5/6} + 
\sum_{0 \neq f \in V^*(\Z)} \widehat{\Phi_q}(f) \widehat{\phi} \bigg( \frac{f X^{1/6}}{q} \bigg),
\end{equation}
where we regard the sum over $f \neq 0$ as an error term, which 
if $\phi$ is a Schwartz function is essentially supported on a box of side length $q X^{-1/6}$. 
If we can bound this error by $o(X^{5/6})$, when summed over $q \leq X^{\alpha}$, then we obtain a {\itshape level of
distribution} of $\alpha$ for the function $\Phi_q$. 

Following \cite[\S 3.1]{TT_leveldist}, we note that the bilinear form \eqref{eq:bilinear} defines 
an injection $\rho \ : \ V^\ast(\Z) \hookrightarrow V(\Z)$ whose image contains $12 V(\Z)$. We did not compute 
$\widehat{\Phi_2}$ or $\widehat{\Phi_3}$, but note that they are trivially bounded above by $1$. Therefore,
by abuse of notation we {\itshape define} $\widehat{\Phi_q}(f) := \widehat{\Phi_\frac{q}{(q, 6)}}(f)$ for all squarefree $q$ in what follows,
and use this definition to extend $\widehat{\Phi_q}(f)$ from $\textnormal{Im}(\rho)$ to all of $V(\Z)$.

This was all developed more formally in \cite{TT_leveldist}, and we will prove the following statement,
designed to satisfy the hypothesis of \cite[Proposition 10]{TT_leveldist}. In what follows, we write
$B\subset V(\R)$ for the box of side length $2$ centered in the origin,
so that $rB\subset V(\R)$ is the set of binary quartic forms whose
absolute values of coefficients are all bounded by $r$.

\begin{cor}\label{cor:box-estimate}
Let $\alpha$ be an arbitrary real number with $\alpha<1/3$.
There exist $c<5/6$ and $\eta>0$ such that
for all $Q<X^\alpha$, with $r=QX^{\eta-1/6}$, we have
\[
X^{5/6}\sum_{q \in [Q, 2Q]} 
\sum_{\substack{0 \neq f \in V(\Z)\cap rB}}
|\widehat{\Phi_q}(f)|
\ll
X^c,
\]
where the implied constant may depend on
$\alpha$, $c$ and $\eta$.
\end{cor}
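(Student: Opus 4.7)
The plan is to combine the pointwise Fourier estimates of Theorem \ref{thm:ff-disc} with sieve-style counts of integer forms in $rB$ whose reductions modulo primes of $q$ lie in prescribed strata of $V(\F_p)$. For squarefree $q$ coprime to $6$, the Chinese remainder theorem gives $\widehat{\Phi_q}(f) = \prod_{p \mid q} \widehat{\Phi_p}(f \bmod p)$, and the convention $\widehat{\Phi_q} = \widehat{\Phi_{q/(q,6)}}$ absorbs the primes $2, 3$ into bounded factors. From Theorem \ref{thm:ff-disc} together with the Hasse-Weil bound $|a(E'_f)| \leq 2\sqrt p$, I extract the pointwise bound $|\widehat{\Phi_p}(f)| \ll p^{-\beta_p(f)}$, where $\beta_p(f)$ takes the values $7/2, 4, 3, 2, 1$ respectively on the generic locus, on the semi-degenerate locus together with the $(1^211)/(1^22)$ strata, on the $(1^21^2)/(2^2)$ strata, on the $(1^31)/(1^4)$ strata, and at the point $\{f \equiv 0 \pmod p\}$.

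For each $f$, I factor $q = q_1 q_2 q_3 q_4 q_5$ according to which of the five exponents arises at each $p \mid q$, yielding
\[
|\widehat{\Phi_q}(f)| \ll d(q)^{O(1)} q_1^{-7/2} q_2^{-4} q_3^{-3} q_4^{-2} q_5^{-1}.
\]
Swapping the order of summation, for each ordered decomposition $(q_1, \ldots, q_5)$ of $q$ into pairwise coprime squarefree parts I must estimate
\[
N(q_2, q_3, q_4, q_5) := \#\{0 \neq f \in V(\Z) \cap rB : f \bmod p \text{ lies in the relevant stratum for every } p \mid q_i, \ i \ge 2\}.
\]
The relevant strata are constructible subsets of $V(\F_p)$ of codimensions $1, 2, 2, 5$ respectively, so the sieving framework of \cite{TT_leveldist} should deliver $N(q_2, q_3, q_4, q_5) \ll q^\epsilon \, r^5/(q_2 q_3^2 q_4^2 q_5^5)$ up to controllable boundary contributions.

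Combining the two ingredients and using $q_1 = q/(q_2 q_3 q_4 q_5)$, the main-term contribution from a fixed decomposition simplifies to $r^5 q^{-7/2+\epsilon} q_2^{-3/2} q_3^{-3/2} q_4^{-1/2} q_5^{-5/2}$; since all exponents on the $q_i$ are negative, summing first over the finitely many type-labels within each slot and then over decompositions of $q$ costs only a further $q^\epsilon$, yielding $\sum_{0 \ne f \in V(\Z) \cap rB} |\widehat{\Phi_q}(f)| \ll r^5 q^{-7/2+\epsilon}$. Summing over $q \in [Q, 2Q]$ gives $\ll r^5 Q^{-5/2+\epsilon}$; substituting $r = QX^{\eta-1/6}$, $Q \le X^\alpha$ and multiplying by $X^{5/6}$, the total is $\ll X^{5\alpha/2 + 5\eta + \epsilon}$. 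For any $\alpha < 1/3$, choose $\eta, \epsilon > 0$ so that $5\alpha/2 + 5\eta + \epsilon < 5/6$, and then take $c$ anywhere in $(5\alpha/2 + 5\eta + \epsilon, \, 5/6)$.

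The main obstacle will be the sieve estimate on $N(q_2, q_3, q_4, q_5)$: it must be uniform over $q$ up to $X^\alpha$ while $r$ is only of size $X^{1/6+\eta}$, so $q$ can greatly exceed $r$ and naive local-density heuristics do not apply directly. Controlling the boundary terms that arise from simultaneously imposed congruence conditions of several different codimensions is the delicate ingredient. This is precisely the regime for which the framework of \cite{TT_leveldist} was engineered, and the transfer to the binary quartic setting should be essentially routine once one feeds in the codimension data established in Section \ref{sec:expsum}.
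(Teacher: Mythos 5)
Your overall architecture (stratify $V(\F_p)$ by the size of $\widehat{\Phi_p}$ from Theorem \ref{thm:ff-disc}, factor $q$ accordingly, and count integer forms in $rB$ subject to the resulting congruence conditions) is the same as the paper's, which proves the intermediate bound stated as Theorem \ref{thm:box-estimate} via a three-way decomposition $q=q_0q_3q_5$ rather than your five-way one. But there is a genuine gap at the step you defer: the claimed bound $N(q_2,q_3,q_4,q_5)\ll q^\epsilon\,r^5/(q_2q_3^2q_4^2q_5^5)$ is false in the relevant regime, and the failure is not a "controllable boundary contribution." Take $f=x^3(cx+dy)$ with $|c|,|d|\le r$: there are $\gg r^2$ such nonzero forms in $V(\Z)\cap rB$, and every one of them reduces into your codimension-$2$ stratum $(1^31)/(1^4)$ modulo \emph{every} prime, so $N\gg r^2$ uniformly in $q_4$. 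With $Q$ near $X^{1/3}$ and $r\approx X^{1/6+\eta}$ your claimed bound $r^5/q_4^2\approx X^{1/6+5\eta}$ is far smaller than $r^2\approx X^{1/3+2\eta}$. These global points on the singular subvariety $\fx$ contribute $\asymp r^2/Q^{1-\epsilon}$ to the double sum, which after multiplying by $X^{5/6}$ is $X^{1/2+2\eta+\alpha+\epsilon}$ --- right at the edge of $X^{5/6}$ for $\alpha$ near $1/3$, so this term cannot be absorbed; it is one of the binding constraints.

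Repairing this requires two inputs that your proposal does not supply and that are not routine transfers from \cite{TT_leveldist}. First, one must split off the forms lying on $\fx(\Q)$ and prove the count $|V(\Z)\cap rB\cap \fx(\Q)|\ll r^2$ (the paper's Lemma \ref{lem:integral-singular-pts}); the generic bound $O(r^3)$ for a $3$-dimensional variety only yields a level of distribution $1/4$, so the exponent $2$ here --- obtained from the explicit parametrizations $f=l^3m$ and $f=tq^2$ --- is exactly what makes $\alpha<1/3$ attainable. Second, for forms \emph{not} on $\fx(\Q)$ whose singular modulus $q_0q_3$ exceeds $r$, the residue classes contain at most one lattice point each and the density heuristic again fails; the paper handles this via the Ekedahl--Bhargava geometric sieve together with a divisor-bound argument (since $q_3''$ must divide a nonzero value $h_i(f)$, there are only $r^\epsilon$ choices of $q_3''$ per $f$), producing the middle term $r^4/Q^2$ of Theorem \ref{thm:box-estimate}. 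Your final numerology recovers only the third term $r^5/Q^{5/2}$; the other two terms happen to impose the same threshold $\alpha<1/3$, but you cannot know that without actually bounding them.
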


The above is deduced immediately from the following bound:
\begin{thm}\label{thm:box-estimate}
Let $\epsilon>0$ be arbitrary. For $Q>r$,
\begin{equation}\label{eq:box-estimate}
\sum_{q \in [Q, 2Q]} 
\sum_{\substack{0 \neq f \in V(\Z)\cap rB}}
|\widehat{\Phi_q}(f)|
\ll
\left(
\frac{r^2}{Q}+\frac{r^4}{Q^2}+\frac{r^5}{Q^{5/2}}
\right)
Q^\epsilon.
\end{equation}
Here, $q$ runs through all squarefree integers in the range $[Q,2Q]$.
\end{thm}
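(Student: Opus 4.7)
My plan is to use the Chinese Remainder Theorem to reduce to prime moduli, apply Theorem \ref{thm:ff-disc} together with the Hasse--Weil estimate, and then sum using lattice-point bounds stratified by the reduction type of $f$ at each prime $p \mid q$.

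By the convention extending $\widehat{\Phi_q}$ through $\widehat{\Phi_{q/(q,6)}}$ and CRT, for squarefree $q$ we have $\widehat{\Phi_q}(f) = \prod_{p \mid q,\, p > 3} \widehat{\Phi_p}(f)$. Combining Theorem \ref{thm:ff-disc} with the Hasse--Weil estimate $|a(E_f')| \le 2\sqrt p$ yields the uniform pointwise bound
\[
|\widehat{\Phi_p}(f)| \ll p^{-7/2} + p^{-2}\,\mathbf{1}_W(f \bmod p) + p^{-1}\,\mathbf{1}_{\{0\}}(f \bmod p),
\]
where $W \subset V$ is the reduced closed subscheme whose geometric points have splitting type $(1^4)$, $(1^31)$, $(1^21^2)$, or $(2^2)$, all of codimension $\ge 2$. (The codimension-$1$ strata and the generic locus are absorbed into the $p^{-7/2}$ term, as Theorem \ref{thm:ff-disc} gives bounds $\le p^{-7/2}$ on them.) Expanding $\prod_{p \mid q}$ multiplicatively decomposes $q$ as $q = q_0 q_W q_*$ into coprime squarefree parts, with $q_W$ forcing $f \bmod q_W \in W(\F_{q_W})$ and $q_*$ forcing $q_* \mid f$.

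For the inner count of $f \in V(\Z) \cap rB \setminus \{0\}$ subject to these conditions, substituting $f = q_* g$ with $g \in RB$ (where $R = r/q_*$) and applying a standard lattice-point estimate gives
\[
\#\{g \in RB \setminus \{0\} : g \bmod q_W \in W(\F_{q_W})\} \ll R^5/q_W^2 + \#(W(\Z) \cap RB).
\]
The crucial geometric input is the refined bound $\#(W(\Z) \cap RB) \ll R^2 (\log R)^{O(1)}$, which improves on the naive count $R^{\dim W} = R^3$. This improvement arises from the explicit multiplicative parametrizations $f = cl^4$, $f = cl^3 l'$, $f = cl_1^2 l_2^2$, $f = cq^2$ of the individual strata in $W$: a dyadic summation over the heights of the linear and quadratic factors reduces the integer-point count to absolutely convergent divisor sums of size $O(R^2)$. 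I expect establishing this refined bound, rather than the elementary codimension count, to be the main technical obstacle.

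Substituting these estimates and summing over $q \in [Q,2Q]$ and over the factorizations $q = q_0 q_W q_*$ produces two principal contributions: the all-generic case $q_0 = q$ contributes $r^5 \sum_{q \sim Q} q^{-7/2}\, 3^{\omega(q)} \ll r^5 Q^{-5/2 + \epsilon}$, while the $W$-boundary term (using the $R^2$ factor) contributes $r^2 \sum_{q \sim Q} q^{-2}\, 3^{\omega(q)} \ll r^2 Q^{-1 + \epsilon}$. A case analysis shows $r^4/Q^2 \le \max(r^2/Q,\, r^5/Q^{5/2})$ for all $Q > 0$, so the intermediate term in the claimed bound is automatically dominated, and the stated estimate \eqref{eq:box-estimate} follows.
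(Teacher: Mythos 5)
Your overall architecture mirrors the paper's: the same stratification of $\widehat{\Phi_p}$ into the bounds $p^{-1}$, $p^{-2}$, $p^{-7/2}$ according to whether $f\bmod p$ is zero, lies in the codimension-two locus $W$ (the paper's $\fx$), or is generic; the same multiplicative decomposition of $q$; and the same key geometric input $\#(W(\Z)\cap RB)\ll R^2$, which you correctly flag as requiring the explicit parametrizations $l^3l'$ and $tq^2$ (this is exactly the paper's Lemma \ref{lem:integral-singular-pts}). Your observation that $\frac{r^4}{Q^2}=\bigl(\frac{r^2}{Q}\bigr)^{1/3}\bigl(\frac{r^5}{Q^{5/2}}\bigr)^{2/3}\le\max\bigl(\frac{r^2}{Q},\frac{r^5}{Q^{5/2}}\bigr)$ is also correct, so the middle term is formally redundant in the statement.

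However, there is a genuine gap at the step you label a ``standard lattice-point estimate,'' namely
\[
\#\{g \in RB \setminus \{0\} : g \bmod q_W \in W(\F_{q_W})\} \ll R^5/q_W^2 + \#(W(\Z) \cap RB).
\]
This is standard only when $q_W\le R$, where it follows from $\#W(\F_{q_W})\ll q_W^{3+\epsilon}$ and the count of $\ll(R/q_W)^5$ lattice points per residue class. When $q_W>R$ the per-class count degenerates to $O(1)$ and the trivial bound becomes $\min(q_W^{3+\epsilon},R^5)$, which is useless; your claimed bound then asserts that, beyond the $R^5/q_W^2$ ``random'' count, the only solutions are the globally degenerate forms in $W(\Z)\cap RB$. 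That is false: a form $g\in RB$ with $g\notin\fx(\Q)$ can still reduce into $\fx(\F_p)$ for a large prime $p$ (its discriminant and the auxiliary covariant merely need to vanish mod $p$), and no elementary slicing argument recovers the full $q_W^{-2}$ saving from two congruence conditions once $q_W>R$ --- a one-variable slice gives only $\ll R^4(R/q_W+1)$, i.e.\ $R^4$ per modulus. This regime is precisely where the paper must invoke the Ekedahl--Bhargava geometric sieve (to show that only $\ll (R)^{4+\epsilon}$ forms in the box admit \emph{any} admissible large modulus) together with a divisor-bound argument (each such $f\notin\fx(\Q)$ has $h_i(f)\neq 0$ for some defining equation $h_i$ of $\fx$, so the number of admissible $q_3''$ dividing $h_i(f)$ is $\ll r^\epsilon$), and it is exactly this regime that produces the $r^4/Q^2$ term. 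So while that term is arithmetically dominated by the other two, your argument does not actually establish any valid bound for the large-modulus contribution; you have assumed away the hardest part of the proof. To repair the argument you would need to replace the per-modulus estimate in the range $q_W>R$ by the sieve-plus-multiplicity argument of the paper's treatment of $S_2$ (or an equivalent substitute).
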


Our application of Corollary \ref{cor:box-estimate} will be discussed
more in detail in the next section.
For the rest of this section
we focus on the proof of Theorem \ref{thm:box-estimate}.

Let $\fx\subset V$ be the closure of the set of binary quartic forms
which have either triple roots or distinct double roots in $\mathbb P^1$.
This $\fx$ is a $3$ dimensional subvariety of $V$ defined over $\Q$.
To prove Theorem \ref{thm:box-estimate},
we first want to bound the cardinality of
the set $V(\Z)\cap rB\cap \fx(\Q)$.
Since $\fx$ is of dimension $3$, by general theory we have
$|V(\Z)\cap rB\cap \fx(\Q)|\ll r^3$.
For our particular $\fx$, we have 
the following stronger bound:
\begin{lem}\label{lem:integral-singular-pts}
We have
\begin{equation}
|V(\Z)\cap rB\cap \fx(\Q)|\ll r^2.
\end{equation}
\end{lem}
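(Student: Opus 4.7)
The plan is to identify $\fx$ as the union of two three-dimensional components and bound each separately. Over $\bar\Q$, a binary quartic with a triple root factors as $l^3m$ with $l,m\in\Sym^1$, and one with two distinct double roots factors as $(l_0l_1)^2=q^2$ with $q\in\Sym^2$. Taking Zariski closure and descending to $\Q$, we write
\[
\fx \;=\; \fx_A\cup\fx_B, \qquad \fx_A = \{q^2:q\in\Sym^2\}, \qquad \fx_B = \{l^3m:l,m\in\Sym^1\}.
\]
The descent is valid: a rational triple root must be $\Q$-rational (otherwise a Galois conjugate would be another triple root, forcing degree $\geq 6$), and a pair of Galois-conjugate irrational double roots assembles into a rational quadratic factor. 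Both components are three-dimensional (the squaring map $q\mapsto q^2$ is $2$-to-$1$, and the $\gm$-action $(l,m)\mapsto(\lambda l,\lambda^{-3}m)$ kills one dimension of $\Sym^1\times\Sym^1$).

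For $\fx_A$: suppose $f=q^2\in V(\Z)\cap rB$ with $q\in\Sym^2(\Q)$. Write $q=d^{-1}q_0$ for primitive $q_0 = b_0x^2+b_1xy+b_2y^2\in\Sym^2(\Z)$ and $d\in\Z_{\geq 1}$. Any prime dividing the content of $q_0^2$ must divide the outer coefficients $b_0^2$ and $b_2^2$, hence divides $b_0$ and $b_2$; combined with dividing the middle coefficient $2b_0b_2+b_1^2$, it must then divide $b_1$, contradicting primitivity of $q_0$. Hence $q_0^2$ is primitive and $d=1$, so $q\in\Sym^2(\Z)$ up to sign. The bound $|q^2|\le r$ then yields $|q|\ll r^{1/2}$, giving $O(r^{3/2})$ such forms.

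For $\fx_B$: using the $\gm$-action, we may normalize $l=ax+by$ to be primitive integral; Gauss's lemma then forces $m=cx+dy\in\Sym^1(\Z)$. The leading and trailing coefficients of $l^3m$ are $a^3c$ and $b^3d$, so $|a^3c|,|b^3d|\le r$. For $a,b\neq 0$ with $A=|a|,B=|b|$, we need $A,B\le r^{1/3}$ (else $c=d=0$ and $f=0$), and the number of integer $(c,d)$ per primitive pair is $\ll (1+r/A^3)(1+r/B^3)$. Summing,
\[
\sum_{1\le A,B\le r^{1/3}}(1+r/A^3)(1+r/B^3) \;\ll\; r^2\Big(\sum_{A\ge 1}A^{-3}\Big)^2 + r^{4/3}+r^{2/3} \;\ll\; r^2,
\]
and the degenerate cases $a=0$ or $b=0$ (where $l\in\{\pm x,\pm y\}$ and $f$ has at most two nonzero coefficients) each contribute $O(r^2)$ directly. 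Combining, $|V(\Z)\cap rB\cap\fx(\Q)|\ll r^{3/2}+r^2 = O(r^2)$. The main technical obstacle is the careful Gauss-type integrality argument: the naive parameter count over rational $(l,m)$ or rational $q$ would be much larger than $r^2$, so it is crucial to pin down these parameterizing data to integral (or near-integral) values.
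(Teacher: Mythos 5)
Your overall strategy is the same as the paper's: decompose $\fx(\Q)$ into the triple-root locus and the repeated-quadratic-factor locus, pin the parametrizing data down to integers, and count via the size of the extreme coefficients. Your treatment of the $l^3m$ component (normalizing $l$ primitive, deducing $m$ integral by Gauss's lemma, and summing $(1+r/A^3)(1+r/B^3)$) matches the paper's argument and is correct, and you supply integrality details the paper leaves implicit.

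However, there is a genuine gap in your treatment of the other component. Over $\Q$, a form with two double roots (or with every root of even multiplicity) is of the shape $f=t\,q^2$ with $t\in\Q^\times$ and $q\in\Sym^2(\Q)$, and the scalar $t$ cannot in general be absorbed into $q$: for example $f=2(x^2+y^2)^2$ lies in $V(\Z)\cap\fx(\Q)$ but is neither the square of a rational quadratic form nor of the form $l^3m$. Your set $\fx_A=\{q^2:q\in\Sym^2\}$, interpreted at the level of $\Q$-points of the parametrization, therefore does not cover $\fx(\Q)$, and your count "suppose $f=q^2$ with $q\in\Sym^2(\Q)$" silently discards all $f=t q^2$ with $t$ a non-square. (Your Galois-descent sentence produces a rational quadratic \emph{factor}, i.e.\ $f=t q^2$, not $f=q^2$.) The omitted forms are easily handled — and must be, since there are $\asymp r^{3/2}$ of them: writing $f=t q_0^2$ with $t\in\Z\setminus\{0\}$ and $q_0$ primitive integral (your primitivity-of-$q_0^2$ argument shows $t$ is forced to be an integer), a coefficient of $q_0$ of size $>H$ forces a coefficient of $q_0^2$ of size $>H^2/2$, hence $|t|H^2\ll r$ and the count is
\[
\ll \sum_{0\neq |t|\le r}\Bigl(\frac{r}{|t|}\Bigr)^{3/2}\ll r^{3/2},
\]
which is exactly the paper's estimate. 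With this one correction your proof is complete.
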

\begin{proof}
If $f\in V(\Z)\cap \fx(\Q)$,
either $f=(ax+by)^3(cx+dy)$ for some integers $a,b,c,d$
or $f=t(ax^2+bxy+cy^2)^2$ for some integers $t,a,b,c$.

Suppose $f=(ax+by)^3(cx+dy)$.
If $a=0$, then $f=(0,0,0,*,*)$ and the number of
such $f\in rB$ is $O(r^2)$. The same holds for $b=0$.
If $ab\neq0$, since $f=(a^3c,*,*,*,b^3d)$,
it is enough to bound the number of $(a,b,c,d)$ with
$|a^3c|\leq r$ and $|b^3d|\leq r$, which is
\[
\left(
\sum_{0\neq|a|\leq r}\sum_{|c|\leq r/|a|^3}1
\right)^2
=O(r^2).
\]

Suppose $f=t(ax^2+bxy+cy^2)^2$.
We obviously can assume $t\neq0$. If some coefficient of $ax^2 + bxy + cy^2$ has absolute value greater than $H$, 
then at least one coefficient of $(ax^2 + bxy + cy^2)^2$ has absolute value greater than $H^2 / 2$. Taking $|t|H^2 / 2 = r$,
we see that 
the number of possible such $(t,a,b,c)$ is 
\[
\ll 
\sum_{0\neq|t|\leq r}
\bigg( \frac{r}{t} \bigg)^{3/2} \ll r^{3/2}.
\]
Thus we have the desired result.
\end{proof}

We now prove Theorem \ref{thm:box-estimate}.

\begin{proof}[Proof of Theorem \ref{thm:box-estimate}]
We choose and fix an integral model of $\fx$,
and use the same notation $\fx$ for the integral model.
Then except for a finite number of primes $p$,
$\fx(\F_p)$ is the set of binary quartic forms
whose splitting type is either
$(0), (1^4), (1^31), (1^21^2)$ or $(2^2)$.
Thus Theorem \ref{thm:ff-disc} implies
\begin{equation}\label{eq:exp-sum-bound}
|\widehat\Phi_p(f)|
\ll
\begin{cases}
p^{-1}	&	f\bmod p=0,\\
p^{-2}	&	f\bmod p\in \fx(\F_p)\setminus\{0\},\\
p^{-7/2}	& f\bmod p\notin \fx(\F_p).
\end{cases}
\end{equation}
For each pair $(f,q)$ in the sum of \eqref{eq:box-estimate},
we consider the decomposition $q=q_0q_3q_5$ of $q$
where
$f\bmod{q_0}=0$,
$f\bmod{q_3}\in \fx(\Z/q_3\Z)\setminus\{0\}$
and
$f\bmod{q_5}\in V(\Z/q_5\Z)\setminus \fx(\Z/q_5\Z)$.
Note that $q_0\leq r$ since $f\in rB$.
Then by \eqref{eq:exp-sum-bound},
\[
|\widehat{\Phi_q}(f)|\ll q_0^{-1}q_3^{-2}q_5^{-7/2}Q^\epsilon.
\]
We split the double sum in the left hand side
of \eqref{eq:box-estimate} into three parts
$S_1$, $S_2$ and $S_3$, where:
\begin{itemize}
\item
$S_1$ is the sum over pairs $(q,f)$ with $f\in \fx(\Q)$;
\item
$S_2$ is the sum over pairs $(q,f)$ 
with $f\notin \fx(\Q)$ and $q_0q_3>r$;
\item
$S_3$ is the sum over pairs $(q,f)$ 
with $f\notin \fx(\Q)$ and $q_0q_3\leq r$.
\end{itemize}
(The three terms in the right hand side of
\eqref{eq:box-estimate} respectively correspond
to $S_1$, $S_2$ and $S_3$.)

We first consider $S_1$.
For any such $f$, $q_5=1$ and we have
\[
S_1\ll
Q^\epsilon
\sum_{q_0\leq r}
\sum_{q_3\sim\frac{Q}{q_0}}
\sum_{f\in q_0V_\Z\cap rB\cap \fx(\Q)}q_0^{-1}q_3^{-2},
\]
where $q_3\sim\frac{Q}{q_0}$
means $q_3\in[\frac{Q}{q_0},\frac{2Q}{q_0}]$.
By Lemma \ref{lem:integral-singular-pts},
we have
\[
|q_0V_\Z\cap rB\cap \fx(\Q)|
=
|V_\Z\cap \frac{r}{q_0}B\cap \fx(\Q)|
\ll\frac{r^2}{q_0^2}.
\]
Therefore
\[
S_1\ll
r^2Q^\epsilon
\sum_{q_0\leq r}
\frac1{q_0^3}
\sum_{q_3\sim\frac{Q}{q_0}}
\frac{1}{q_3^2}
\ll
r^2Q^\epsilon
\sum_{q_0\leq r}
\frac1{q_0^3}\cdot \frac{q_0}Q
\ll
\frac{r^2Q^\epsilon}{Q}.
\]

We next consider $S_3$.
By definition,
\[
S_3\ll
Q^\epsilon
\sum_{q_0q_3\leq r}
\sum_{q_5\sim\frac{Q}{q_0q_3}}
\frac{1}{q_0q_3^2q_5^{7/2}}
\cdot
\#\left\{f\in V(\Z)\cap rB
\ 
\vrule
\ 
\begin{array}{l}
f\bmod p=0\  (\forall p\mid q_0)\\
f\bmod p\in \fx(\F_p)\ (\forall p\mid q_3)\\
\end{array}
\right\}.
\]
The conditions on $f\bmod{q_0q_3}$
restrict $f$ to lie in a union of $O(q_3^{3+\epsilon})$
residue classes $\bmod{\ q_0q_3V(\Z)}$.
Since $q_0q_3\leq r$, each class contains
$\ll\big(\frac{r}{q_0q_3}\big)^5$ elements of $V(\Z)\cap rB$.
Thus we have
\begin{align*}
S_3
&\ll
Q^\epsilon
\sum_{q_0q_3\leq r}
\sum_{q_5\sim\frac{Q}{q_0q_3}}
\frac{1}{q_0q_3^2q_5^{7/2}}
\cdot q_3^3
\cdot
\left(
\frac{r}{q_0q_3}
\right)^5\\
&
=
r^5Q^\epsilon
\sum_{q_0q_3\leq r}
\frac{1}{q_0^6q_3^4}
\sum_{q_5\sim\frac{Q}{q_0q_3}}
\frac{1}{q_5^{7/2}}
\ll
r^5
Q^\epsilon
\sum_{q_0q_3\leq r}
\frac{1}{q_0^6q_3^4}
\left(
\frac{Q}{q_0q_3}
\right)^{-5/2}
\ll
\frac{r^5Q^\epsilon}{Q^{5/2}}.
\end{align*}

Finally, we study $S_2$.
We consider a further decomposition
$q_3=q_3'q_3''$, where 
$q_3'$ is the largest divisor of $q_3$ for which $q_0q_3'\leq r$.
Then $q_3''>1$ and $q_0q_3'p>r$ for any $p\mid q_3''$.
 We have
\begin{align*}
S_2
&
\ll
Q^\epsilon
\sum_{q_0q_3'\leq r}
\sum_{\substack{1<q_3''\leq\frac{2Q}{q_0q_3'}\\ (q_0q_3',q_3'')=1\\ \forall p\mid q_3'',q_0q_3'p>r}}
\sum_{\substack{f\in(V_\Z\cap rB)\setminus \fx(\Q)\\ f\bmod p=0\  (\forall p\mid q_0)\\
f\bmod p\in \fx(\F_p)\ (\forall p\mid q_3)}}
\sum_{q_5\sim\frac{Q}{q_0q_3'q_3''}}
\frac{1}{q_0(q_3' q_3'')^2q_5^{7/2}}.
\end{align*}
The innermost sum is $\ll(q_0/Q^2)\sum_{q_5}q_5^{-3/2}\ll q_0/Q^2$.
We divide the sum over $f\in V_\Z$ according to its residue class modulo
$q_0q_3'$, and have the bound
\begin{align*}
S_2
&\ll\frac{Q^\epsilon}{Q^2}
\sum_{q_0q_3'\leq r}q_0
\sum_{f_0 \bmod q_0 q_3'}
\#\left\{(f,q_3'')
\ 
\vrule
\ 
\begin{array}{l}
f\in(f_0+q_0q_3'V_\Z)\cap rB, f\notin \fx(\Q),\\
1<q_3''\text{ : squarefree}, (q_3'',q_0q_3')=1,\\
\forall p\mid q_3'', pq_0q_3'>r,\\
f\bmod p\in \fx(\F_p)\ (\forall p\mid q_3'')\\
\end{array}
\right\}. 
\end{align*}
By a variation \cite[(33)]{TT_leveldist} of the Ekedahl-Bhargava geometric sieve \cite{B_geosieve}, there are $\ll (r/q_0 q_3')^{4} r^{\epsilon}$ elements $f$ for which the inner count $\# (f, q_3'')$ is nonzero.
To bound the multiplicity, note that $\fx$ is given by a finite number of equations $h_1(f) = 0, \dots, h_k(f) = 0$. (In
this particular case we may take $k = 2$, $h_1$ to be the discriminant, and $h_2$ to be its derivative with respect to the first variable.)
Each pair $(f, q_3'')$ contributing to the sum satisfies
\[
h_i(f) \neq 0, \qquad h_i(f) \equiv 0 \pmod{q_3''}
\]
for some $i$. In particular, $q_3''$ must be a nontrivial divisor of one of the $h_i(f)$, and hence there are $\ll r^{\epsilon}$ options.
We conclude that 
\begin{align*}
S_2
&\ll
\frac{Q^\epsilon}{Q^2}
\sum_{q_0q_3'\leq r}q_0
\sum_{f_0}
\left( \frac{r}{q_0 q_3'} \right)^4 \\
& \ll \frac{Q^\epsilon}{Q^{2}}
\sum_{q_0q_3'\leq r}q_0
(q'_3)^{3 + \epsilon} \left( \frac{r}{q_0 q_3'} \right)^4 \ll 
\frac{r^4Q^\epsilon}{Q^{2}}.
\end{align*}
This finishes the proof.
\end{proof}

\section{Almost prime discriminants}\label{sec:ap}

In this section we 
prove Theorems \ref{thm:ap-2selmer} and \ref{thm:ap-bqf}
by applying Corollary \ref{cor:box-estimate}.
As we will see, Theorem \ref{thm:ap-bqf} is proved
in the process of proving Theorem \ref{thm:ap-2selmer}.

We begin by recalling the Birch--Swinnerton-Dyer parametrization \cite{BSD} 
of $2$-Selmer groups of elliptic curves in terms of binary quartic forms,
in the formulation of Bhargava
and Shankar \cite{BS2}.
Recall our definitions of the invariants
$I, J$ and $\disc$ from Section \ref{sec:background}.
For $f\in V(\Z)$ we define the {\itshape height}
of a binary quartic form to be $H(f) := \max(|I(f)^3|, J(f)^2/4)$.

We will need the notion of {\itshape solubility}
and local solubility from Bhargava and Shankar's work.
Following \cite[Section 3.1]{BS2}, we say that a binary quartic form
$f$ over a field $K$ is $K$-soluble if the equation $z^2 = f(x, y)$
has a solution with $x, y, z \in K$ and $(x, y) \neq (0, 0)$. Over $\Q$,
we further say that $f$ is locally soluble if
it is soluble over $\R$ and over every $p$-adic field $\Q_p$.

The parametrization of $2$-Selmer groups, as stated and applied by Bhargava and Shankar \cite[Theorem 3.5]{BS2} and originating in the work
of Birch and Swinnerton-Dyer \cite{BSD}, is:

\begin{thm}[\cite{BSD, BS2}]\label{thm:bsd-bs} Let 
$E : y^2 = x^3 - (I/48)x - J/1728$ be an elliptic curve over $\Q$.
Then there exists a bijection between elements in the $2$-Selmer group of $E$, 
and $\PGL_2(\Q)$-equivalence classes of locally soluble integral binary quartic forms having invariants
$I$ and $J$.

Further, the forms without a linear factor correspond to nontrivial elements of ${\rm Sel}_2(E)$.
\end{thm}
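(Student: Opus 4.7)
The plan is to prove this by running the classical $2$-descent for elliptic curves and identifying the resulting geometric objects with binary quartic forms. Recall that the $2$-Selmer group sits in the exact sequence
\[
0\to E(\Q)/2E(\Q)\to\mathrm{Sel}_2(E)\to \Sha(E)[2]\to 0,
\]
and is defined as the subgroup of $H^1(\Q,E[2])$ consisting of classes whose restriction to $H^1(\Q_v,E[2])$ lies in the image of $E(\Q_v)/2E(\Q_v)$ for every place $v$. The first step is therefore to interpret classes in $H^1(\Q,E[2])$ as geometric objects: each class corresponds to a principal homogeneous space $C$ for $E$ (a smooth genus one curve with $\mathrm{Jac}(C)\cong E$) equipped with a $2$-covering $C\to E$, up to isomorphism of $2$-coverings. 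Local solubility of $C$ at every place is equivalent to the image being trivial in $\prod_v H^1(\Q_v,E)$, which characterizes elements of $\mathrm{Sel}_2(E)$.

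The next, and geometrically central, step is to realize every such $C$ as a binary quartic curve. A $2$-covering $C\to E$ is pulled back from a degree-$2$ map $C\to \Projsp^1$ whose ramification divisor is a divisor of degree $4$ on $\Projsp^1$; hence $C$ admits a model $z^2=f(x,y)$ where $f$ is a binary quartic form over $\Q$, unique up to the action
\[
f\mapsto \lambda^{-2}f(ax+by,cx+dy),\qquad z\mapsto \lambda^{-1}(ad-bc)z,\ \lambda\in\Q^\times,\ \begin{pmatrix}a&b\\c&d\end{pmatrix}\in\GL_2(\Q).
\]
This is precisely the twisted $\PGL_2(\Q)$ action on $V(\Q)$ from Section~\ref{sec:background}, and one checks using the transformation laws for $I$ and $J$ that the invariants of $f$ match the invariants of $E$ up to the scaling present in $y^2=x^3-(I/48)x-J/1728$; indeed, one identifies $\Jac(z^2=f(x,y))$ with this Weierstrass model via the classical formulas recorded by Cremona \cite{cremona}. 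Thus to each element of $\mathrm{Sel}_2(E)$ we obtain a well-defined $\PGL_2(\Q)$-equivalence class of locally soluble rational binary quartic forms with the prescribed invariants, and the construction is reversible.

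To upgrade from $\PGL_2(\Q)$-classes of rational forms to \emph{integral} forms with the same invariants, the next step is a minimization argument: given a locally soluble rational quartic with invariants $(I,J)$, one shows that within its $\PGL_2(\Q)$-orbit there lies a model in $V(\Z)$ and that the $\PGL_2(\Q)$-equivalence classes of integral forms with invariants $(I,J)$ are in bijection with $\mathrm{Sel}_2(E)$. This uses the fact that every $\PGL_2(\Q_p)$-orbit of $p$-adically integral forms with given invariants meets $V(\Z_p)$, combined with strong approximation for $\PGL_2$. Finally, one observes that $f$ has a rational linear factor if and only if the curve $z^2=f(x,y)$ has a rational Weierstrass point, which happens if and only if the associated $2$-covering $C\to E$ admits a section, i.e.\ the class is trivial in $H^1(\Q,E[2])$; this gives the last assertion.

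The main obstacle I would expect is the minimization/integrality step, together with keeping precise track of the normalization constants in the invariants: one must show not only that an integral model exists but that distinct $\PGL_2(\Q)$-classes of integral locally soluble forms with matching invariants remain distinct, and that no nontrivial Selmer element is lost. The cohomological bijection and the local solubility criterion are essentially formal once the $2$-covering interpretation is set up; the delicate arithmetic input is the descent from $\Q$-forms to $\Z$-forms via $\GL_2(\Z_p)$-reduction at each prime and a careful book-keeping of the scaling factors $\lambda$ that enter the twisted action.
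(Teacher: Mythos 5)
The first thing to say is that the paper offers no proof of this statement: it is imported wholesale from Birch--Swinnerton-Dyer and Bhargava--Shankar (it is Theorem~3.5 of \cite{BS2}), so there is no internal argument to compare yours against. Your outline does follow the route of those references --- $2$-coverings, binary quartic models, minimization to integral forms, and the rational-linear-factor criterion for the identity class --- so as a roadmap it is faithful to how the cited sources proceed.

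As a proof, however, it has genuine gaps, and they sit precisely at the points you declare to be ``essentially formal.'' First, the passage from a class in $H^1(\Q,E[2])$ to a model $z^2=f(x,y)$ is not automatic: a $2$-covering $C$ carries a canonical $\Q$-rational divisor \emph{class} of degree $2$, but the degree-$2$ map $C\to\Projsp^1$ exists over $\Q$ only if that class is represented by an actual rational divisor, i.e.\ only if the associated Brauer--Severi conic has a $\Q$-point. For a general class in $H^1(\Q,E[2])$ this period--index obstruction need not vanish, and such classes admit no binary quartic model at all. The construction works for Selmer elements because local solubility of $C$ makes the conic everywhere locally soluble, hence globally soluble by Hasse--Minkowski; this is where local solubility does real, non-formal work, and your sketch omits it. Second, the integrality step is a substantive theorem of Birch--Swinnerton-Dyer (refined by Cremona): one must show that every \emph{$\Q_p$-soluble} rational quartic with invariants $I,J\in\Z_p$ is $\PGL_2(\Q_p)$-equivalent to a form in $V(\Z_p)$ with the \emph{same} invariants (not $p^4I,\,p^6J$), and solubility is needed for this minimization; moreover strong approximation does not hold for $\PGL_2$ (it is not simply connected), so the gluing of local models needs a different justification (class number one for $\PGL_2$ over $\Q$). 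You correctly flag minimization as the main obstacle but only assert the key local lemma. Finally, for the last sentence of the theorem you argue only one direction (rational linear factor $\Rightarrow$ trivial class); you would also need that the trivial class is represented by a quartic with a rational linear factor and that all locally soluble such quartics with invariants $I,J$ form a single $\PGL_2(\Q)$-orbit.
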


We now start the proof of Theorems \ref{thm:ap-2selmer} and \ref{thm:ap-bqf}.
Let
\[
f_0(x,y) :=-x^4-38x^3y-12x^2y^2-8xy^3\in V(\Z),
\]
\[
S :=\{g \cdot f\in V(\Z) : g \in \GL_2(\Z), f\in V(\Z),\ f \equiv f_0\pmod{3^3\cdot 2^{12}}\}.
\]
Then $S$ is defined by congruence conditions modulo
$3^3\cdot 2^{12}$ on $V(\Z)$. For $f \in S$, we have
\begin{equation}\label{eq:cong-for-S}
I(f)\equiv I(f_0) = - 3\cdot 2^8 \pmod{3^3\cdot 2^{12}}, \ \ 
J(f)\equiv J(f_0) = - 3^3 \cdot 2^{10} \pmod{3^3\cdot 2^{12}},
\end{equation}
and we check from \eqref{eq:def_disc} that $\Disc(f)/2^{20}$ is an integer coprime to $6$. The reason for our choice of $S$ will become apparent shortly.

Our key analytic result is the following:
\begin{thm}\label{thm:used_sieve}
For any $\alpha<1/3$, we have
\begin{equation}\label{eq:ap-S}
\#\left\{
f\in\GL_2(\Z)\backslash S
\ \vrule\ 
\begin{array}{l}
\text{$H(f)<X$}\\
p\mid \Disc(f)/2^{20}\Rightarrow p>X^{\alpha/4}\\
\Omega(\Disc(f)/2^{20})\leq 4\\
\text{$f$ is $\R$-soluble}\\
\text{$f$ is irreducible over $\Q$}\\
\text{$\Disc(f)/2^{20}$ is squarefree}
\end{array}
\right\}
\gg\frac{X^{5/6}}{\log X}.
\end{equation}
Further, all $f$ counted in \eqref{eq:ap-S} are locally soluble.
\end{thm}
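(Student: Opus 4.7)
The plan is to adapt the sieve framework of \cite{TT_leveldist}, using Corollary \ref{cor:box-estimate} as the central analytic input. Let $\mathcal{A}(X)$ denote the multiset of values $\Disc(f)/2^{20}$ as $f$ ranges over $\GL_2(\Z)$-representatives in $S$ with $H(f) < X$ and $f$ being $\R$-soluble. I would first invoke Bhargava--Shankar geometry-of-numbers (\cite{BS2}), applied to $S$ (defined by congruence conditions modulo $3^3\cdot 2^{12}$) and restricted to the appropriate connected component of $V(\R)$ cut out by $\R$-solubility, to obtain $|\mathcal{A}(X)|\sim c_S X^{5/6}$ for some constant $c_S>0$. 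Reducible forms contribute only $O(X^{3/4+\epsilon})$ by standard cusp analysis (they lie in lower-dimensional subvarieties of $V$), so they can be discarded.

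Next, combining Poisson summation as in \eqref{eq:poisson1} with Corollary \ref{cor:box-estimate}, one obtains for squarefree $q$ coprime to $6$ an estimate
\[
\#\{a\in\mathcal{A}(X):q\mid a\} = g(q)\,|\mathcal{A}(X)| + E(q,X),
\]
where $g$ is multiplicative of order $\sim 1/q$ and $\sum_{q\le X^\alpha}|E(q,X)| = o(X^{5/6})$ for any $\alpha<1/3$. I would then run a Richert-type weighted sieve with sifting parameter $X^{\alpha/4}$ and a weight of the form $\max(0,\,5-\Omega(a))$, detecting elements $a\in\mathcal{A}(X)$ with all prime factors $>X^{\alpha/4}$ and with $\Omega(a)\leq 4$. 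The Shankar--Shankar--Wang tail estimate from \cite{SSW} is used to upgrade ``squarefree away from small primes'' to full squarefreeness of $\Disc(f)/2^{20}$, giving the lower bound $\gg X^{5/6}/\log X$.

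To verify local solubility of every $f$ in the count, at $p=2,3$ the congruence $f\equiv f_0\pmod{3^3\cdot 2^{12}}$ combined with Hensel's lemma reduces everything to exhibiting a $\Q_2$- and a $\Q_3$-solution to $z^2=f_0(x,y)$, which is a direct computation with $f_0$. At primes $p\geq 5$ with $p\nmid \Disc(f)$, the curve $z^2=f(x,y)$ has smooth reduction modulo $p$, and a smooth $\F_p$-point (supplied by the Weil bound or the Chevalley--Warning theorem for the small $p$) lifts via Hensel. At primes $p\geq 5$ dividing $\Disc(f)/2^{20}$, the squarefreeness condition and coprimality to $6$ imply that $f$ has exactly one double root modulo $p$, and a short local analysis near this node produces a $\Q_p$-point.

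The main obstacle is the weighted sieve: with level of distribution just below $1/3$, obtaining $\Omega\leq 4$ while retaining a lower bound of the expected order $X^{5/6}/\log X$ is borderline, and it is precisely the Hasse--Weil cancellation visible in the $a(E_f')$ term of Theorem \ref{thm:ff-disc} (giving the $O(p^{-7/2})$ bound on $\widehat{\Phi_p}$ on the generic orbit) that supplies the necessary slack for the sieve to close. A secondary technical issue is controlling the multiplicity of integral $\GL_2(\Z)$-orbits inside a rational $\PGL_2(\Q)$-orbit, which is implicit in comparing the count above with the Birch--Swinnerton-Dyer/Bhargava--Shankar parametrization used in Theorem \ref{thm:ap-2selmer}.
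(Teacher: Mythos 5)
Your proposal follows essentially the same route as the paper: Poisson summation combined with Corollary \ref{cor:box-estimate} to obtain a level of distribution just below $1/3$, the Richert--Greaves weighted sieve with $t=4$, the Shankar--Shankar--Wang tail estimate to upgrade ``no small prime factors'' to full squarefreeness of $\Disc(f)/2^{20}$, and local solubility deduced from the congruence modulo $3^3\cdot 2^{12}$ at $p=2$ together with $p^2\nmid\Disc(f)$ at odd primes (the paper simply cites \cite[Proposition 3.18]{BS2} for the latter rather than redoing the local analysis at the node). The one genuine structural difference is that the paper never establishes an asymptotic for the full count: it works with a smooth, compactly supported undercount $\phi(X^{-1/6}f)$ chosen inside the sets $\mathcal F\cdot L^{(i)}$ of \cite{BS2}, which caps the orbit multiplicity at eight and builds in $\R$-solubility, so no cusp analysis or geometry-of-numbers asymptotic is needed. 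One small correction: reducible integral quartics do \emph{not} lie in a proper subvariety of $V$ (forms with a rational linear factor are Zariski dense in $V$), so your parenthetical justification is wrong; the bound $O(X^{2/3+\epsilon})$ for reducible orbits of bounded height is a counting statement, \cite[Lemma 2.3]{BS2}, and with the compactly supported weight it reduces to counting points in a box of side $O(X^{1/6})$.
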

\begin{proof}
This is essentially identical to  \cite[Proposition 15]{TT_leveldist},
applying Corollary \ref{cor:box-estimate}
in combination with the weighted sieve of
Richert \cite{richert} and Greaves \cite{greaves}
in the form \cite[Theorem 5]{TT_leveldist}.

We first show this lower bound when the bottom two conditions on the left of \eqref{eq:ap-S} are omitted.
We begin, as in \cite{TT_leveldist}, by replacing the sharp cutoff $H(f) < X$ by a nonzero smooth weighting function of the form
$\tilde\phi\colon f \mapsto \phi(X^{-1/6} f)$, where
$\tilde \phi$ is supported on a bounded set consisting exclusively
forms with height in $(0, 1)$, within one of the
sets $\mathcal{F} \cdot L^{(i)}$ ($i=0,1$, or $2+$)
described in \cite[Section 2.1]{BS2},
and satisfying $0 \leq \phi(f) \leq \frac{1}{8}$ for all $f$. 
As explained in \cite{BS2}, this construction guarantees that
each $\GL_2(\Z)$-orbit will be represented by at most eight points in the
support of $\tilde \phi$, and all such points are $\R$-soluble.
To count integer orbits only of $S$, we further let
$\Psi_S$ be the function on $V(\Z/3^32^{12}\Z)$ detecting $S$. 
Then
\[
\sum_{f\in S}\phi(X^{-1/6}f)=
\sum_{f\in V(\Z)}\Psi_S(f)\phi(X^{-1/6}f)
\]
is a smooth undercount for the $\R$-soluble
integral orbits $\GL_2(\Z)\backslash S$
with height up to $X$.

We now follow the argument in \cite{TT_leveldist} exactly,
using Poisson summation
and using Corollary \ref{cor:box-estimate} to bound the error terms, setting up an application of the weighted sieve.
For each squarefree integer $q$ coprime to $6$, Poisson summation yields
\begin{equation}\label{eq:smooth-count}
\sum_{f\in V(\Z)}\Phi_q(f)\Psi_S(f)\phi(X^{-1/6}f) = \widehat{\Phi_q}(0)\widehat{\Psi_S}(0)\widehat\phi(0) + E(X, q),
\end{equation}
for an error term $E(X, q)$ identical to that in \eqref{eq:poisson1}, but with the level of support expanded by a (harmless) factor of $3^32^{12}$.
Then $\widehat{\Phi_q}(0)$ is multiplicative in $q$, and
$\omega(p) :=\widehat{\Phi_p}(0)$
satisfies a one-sided linear sieve inequality
in the form \cite[(11)]{TT_leveldist}.
Then Corollary \ref{cor:box-estimate} asserts that
the hypothesis of \cite[Proposition 10]{TT_leveldist}
is satisfied, so that for any $\alpha<1/3$ we have
\[
\sum_{q<X^\alpha}|E(X,q)|\ll X^{5/6-\delta}
\]
for some $\delta>0$, where the sum is over squarefree $q$ coprime to $6$. Thus \cite[Theorem 5]{TT_leveldist},
with $t=4$, ensures the lower bound \eqref{eq:ap-S} (without the bottom two conditions on the left).

To complete the proof of \eqref{eq:ap-S},
we show that 
only a negligible number of integer orbits
are removed by the bottom two conditions on the left.
The number of forms which are reducible is
$\ll X^{4/6 + \epsilon}$, by
\cite[Lemma 2.3]{BS2}.
In our case the proof simplifies since
$\phi$ is compactly supported and thus
we are only counting points in a box of side length $O(X^{1/6})$.
Meanwhile, by a `tail estimate' of
Shankar, Shankar, and Wang \cite[Theorem 6.5]{SSW}, there are 
$\ll X^{5/6 - \alpha/4 + \epsilon} + X^{19/24}$ forms counted in \eqref{eq:ap-S} whose discriminant is divisible by $p^2$ for any $p > X^{\alpha/4}$.
Again this bound is negligible, 
allowing us to add the condition that $\Disc(f)/2^{20}$ is squarefree.

Finally, we check that all $f$ counted in \eqref{eq:ap-S} are $\Q_p$-soluble
for every prime $p$.
For $p$ odd, since $p^2 \nmid \Disc(f)$ this is implied by
\cite[Proposition 3.18]{BS2}. For $p = 2$,
we may assume $f\equiv f_0\pmod{2^{12}}$. Since $f_0(2,-1)=2^8$,
we have $f(2,-1)=2^8(1+2^4u)$ for $u\in\Z$,
which is a $2$-adic square as needed.
\end{proof}

This theorem includes Theorem \ref{thm:ap-bqf} as a special case.
We note that this proof shows that 
we may add finitely many suitable $\GL_2(\Z)$-invariant
congruence conditions
on the binary quartic forms in Theorem \ref{thm:ap-bqf},
and the same results still hold.

To prove Theorem \ref{thm:ap-2selmer},
we translate Theorem \ref{thm:used_sieve} 
into the language of $2$-Selmer elements of elliptic curves over $\Q$.
Let
\[
\mathcal E_\Z
:=\left\{
	(-48A,-1728B)\ \vrule
		\begin{array}{l}
			A,B\in V(\Z),
			p^4\mid A\Rightarrow p^6\nmid B\ (\forall p),\\
			-4A^3-27B^2\neq0
		\end{array}
\right\}.
\]
Note that $48=3\cdot2^4$ and $1728=3^3\cdot2^6$.
For each $\mathcal E_\Z\ni (I,J)=(-48A,-1728B)$,
we associate an elliptic curve
\[
E^{IJ}\colon y^2=x^3+Ax+B=x^3-\frac{I}{48}x-\frac{J}{1728}.
\]
This $(I,J)\mapsto E^{IJ}$ gives a bijection between $\mathcal E_\Z$
and the set of isomorphism classes of elliptic curves over $\Q$.
The (naive) height of this elliptic curve is defined (as in \cite{BS2}) by
\[
H(E^{IJ}):=\max\{4|A|^3,27B^2\}=\frac{1}{3^3\cdot 2^{10}}\max\{|I|^3,J^2/4\}
\]

For a non-singular binary quartic form $f\in V(\Q)$,
the elliptic curve $E_f$  associated by Theorem \ref{thm:bsd-bs} is defined by
\[
E_f\colon y^2=x^3-\frac{I(f)}{48}-\frac{J(f)}{1728}.
\]
We consider the discriminant and height of $E_f$
for $f$ we are counting. We have:
\begin{lem}\label{lem:E_f}
Let $f\in S$ and 
\yicmtwo{suppose} that $\Disc(f)/2^{20}$ is not divisible by $p^{12}$ for any prime $p$.
Then we have 
$H(E_f)=H(f)/3^3\cdot2^{10}$
and
$\Disc(E_f)=\Disc(f)/2^{20}$.
\end{lem}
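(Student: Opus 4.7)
Both assertions come down to explicit computations with the Weierstrass model determined by $I(f)$ and $J(f)$, where the congruence $f\equiv f_0\pmod{3^3\cdot 2^{12}}$ from \eqref{eq:cong-for-S} is essential. Setting $A=-I(f)/48$ and $B=-J(f)/1728$, I would first use the congruences to write $I(f)=-3\cdot 2^8+3^3\cdot 2^{12}k_1$ and $J(f)=-3^3\cdot 2^{10}+3^3\cdot 2^{12}k_2$ for some $k_1,k_2\in\Z$, which yields
\[
A=2^4(1-3^2\cdot 2^4k_1),\qquad B=2^4(1-4k_2).
\]
In particular $A,B\in\Z$ with $v_2(A)=v_2(B)=4$ and $v_3(A)=v_3(B)=0$, so $E_f$ admits the integral short Weierstrass model $y^2=x^3+Ax+B$.

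For the height, I would verify that $(A,B)$ satisfies the minimality condition defining $H(E_f)$, namely that no prime $p$ has $p^4\mid A$ and $p^6\mid B$. This fails at $p=2$ since $v_2(B)=4<6$, at $p=3$ since $v_3(A)=0<4$, and for $p\geq 5$ since the identity $4A^3+27B^2=-\Disc(f)/2^{12}$ would force $p^{12}\mid\Disc(f)/2^{20}$, contradicting the hypothesis. Then $H(E_f)=H(f)/(3^3\cdot 2^{10})$ follows directly from $4|A|^3=|I(f)|^3/(3^3\cdot 2^{10})$ and $27B^2=(J(f)^2/4)/(3^3\cdot 2^{10})$.

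The discriminant claim is more subtle: the short model $(A,B)$ has $\Delta=-16(4A^3+27B^2)=\Disc(f)/2^8$, which overshoots the asserted value by $2^{12}$. The point is that $(A,B)$ is only minimal among \emph{short} Weierstrass models; one can reduce further at $p=2$ by passing to a generalized Weierstrass form. The reference quartic $f_0$ is chosen precisely so that the Tate transformation $(u,r,s,t)=(2,0,0,4)$, i.e. $(x,y)=(4x',\,8y'+4)$, produces the new equation
\[
y'^2+y'=x'^3+\tfrac{A}{16}\,x'+\tfrac{B-16}{64},
\]
whose coefficients $A/16=1-144k_1$ and $(B-16)/64=-k_2$ are integers by the formulas above. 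Since this transformation scales the discriminant by $u^{-12}=2^{-12}$, the new integral model has discriminant $\Disc(f)/2^{20}$.

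Finally, I would verify that the new model is the minimal Weierstrass model of $E_f$ over $\Z$. If it were non-minimal at some prime $p$, a further integral change of variables with $v_p(u')\geq 1$ would exist, and the resulting integral discriminant $\Disc(f)/(2^{20}u'^{12})$ would force $p^{12}\mid\Disc(f)/2^{20}$, which the hypothesis excludes. Hence $\Disc(E_f)=\Disc(f)/2^{20}$. The main obstacle in this argument is identifying the Tate quadruple that realizes the descent at $2$; once it is in hand, everything else is direct bookkeeping with the defining congruences of $S$.
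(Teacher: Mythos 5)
Your proposal is correct and follows essentially the same route as the paper: both verify $(I(f),J(f))\in\mathcal E_\Z$ via the congruences \eqref{eq:cong-for-S} and the identity $4A^3+27B^2=-\Disc(f)/2^{12}$ to get the height, then pass to the integral model $y'^2+y'=x'^3+ax'+b$ (your Tate quadruple $(2,0,0,4)$ is exactly the paper's two-step substitution $x\mapsto 4x'$, $y\mapsto 8y'+4$) and invoke the hypothesis on $p^{12}$ to rule out further minimization. No gaps.
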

\begin{proof}
We put $A=-I(f)/48$ and $B=-J(f)/1728$,
which are integers by \eqref{eq:cong-for-S}.
Then $E_f$ is given by $y^2=x^3+Ax+B$.
Suppose $p^4\mid A$ and $p^6\mid B$ for a prime $p$.
Since $2^5\nmid B$ by \eqref{eq:cong-for-S},
$p$ must be odd. But then
since $-4A^3-27B^2=\Disc(f)/2^{12}=2^8\Disc(f)/2^{20}$,
$\Disc(f)/2^{20}$ must be divisible by $p^{12}$ which is a contradiction.
Thus $(I(f),J(f))\in\mathcal E_\Z$ and this implies the first assertion.

We further let $a=A/2^4$
and \yicmtwo{$b=B/2^6-1/4$,}
which are still integers.
The elliptic curve $E_f$
is isomorphic to $y^2=x^3+Ax/2^4+B/2^6=x^3+ax+b+1/4$.
Replacing $y$ with $y+1/2$, this in turn is isomorphic to
$y^2+y=x^3+ax+b$. The discriminant for this
Weierstrass form is $\Delta=-64a^3-432b^2-216b-27$,
which is $\Disc(f)/2^{20}$.
Since $\Disc(E_f)=\Delta/m^{12}$ for an integer $m$,
by assumption $m=1$ and thus $\Disc(E_f)=\Disc(f)/2^{20}$.
\end{proof}

Finally, it remains to shift from the ${\rm GL}_2(\Z)$-orbits counted in Theorem \ref{thm:used_sieve} to
${\rm PGL}_2(\Q)$-orbits, which by \cite{BSD, BS2} (stated as Theorem \ref{thm:bsd-bs} here) count $2$-Selmer groups of elliptic curves.

Exactly the same issue occurs in \cite{BS2}. In \cite[Section 3.2]{BS2}, Bhargava and Shankar define $n(f)$ to be the number of $\GL_2(\Z)$-orbits
inside the $\PGL_2(\Q)$-orbit of any $f \in V(\Z)$. They also define a related quantity $m(f)$, satisfying $n(f) \leq m(f)$ for all $f$.
(They prove equality for almost all $f$, but we won't need this.)
As is proved in \cite[Proposition 3.6]{BS2},
$m(f)$ admits a decomposition $m(f) = \prod_p m_p(f)$, where
\begin{align*}
{\rm PGL}_2(\Q_p)_f := & \  \{g\in {\rm PGL}_2(\Q_p)\mid g\cdot f\in V(\Z_p)\},
\\
m_p(f) := & \ |{\rm PGL}_2(\Z_p)\backslash {\rm PGL}_2(\Q_p)_f|.
\end{align*}

Since $\Disc(f)$ is squarefree away from $2$,
\yicmtwo{by} \cite[Proposition 3.18]{BS2}, we have $m_p(f) = 1$ for all
odd primes. It thus remains to bound $m_2(f)$.
This is contained in the following general proposition,
which is actually a generalization of \cite[Proposition 3.18]{BS2}.
We note that the idea of the proof is also used
in Shankar and the second named author's work \cite{ST}
to prove a related property of the function $m_p$.

\begin{prop}
Let $\N=\{n\in\Z\mid n\geq0\}$ and $p$ be any prime.
There exists a function $M_p\colon \N\rightarrow\N$ 
with $M_p(0)=M_p(1)=1$ 
such that
$m_p(f)\leq M_p(v_p(\Disc(f)))$ for all $f\in V(\Z_p)$.
\end{prop}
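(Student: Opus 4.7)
The plan is to parametrize $\PGL_2(\Z_p)\backslash\PGL_2(\Q_p)_f$ via Smith normal form (equivalently, the Bruhat--Tits tree of $\PGL_2(\Q_p)$), and show that each coset contributing to $m_p(f)$ forces a lower bound on $v_p(\Disc(f))$; bounding the ``radius'' this way yields the desired $M_p$.

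By the Smith normal form, every integral primitive representative of an element of $\PGL_2(\Q_p)$ has the shape $u\begin{pmatrix}1&0\\0&p^k\end{pmatrix}u'$ with $u,u'\in\GL_2(\Z_p)$ and a unique $k\ge 0$. Absorbing $u$ on the left, each coset in $\PGL_2(\Z_p)\backslash\PGL_2(\Q_p)$ admits a representative $g=\begin{pmatrix}1&0\\0&p^k\end{pmatrix}u'$, and two of these give the same coset if and only if the $u'$'s differ by left multiplication by an element of
\[
B_k:=\{b\in\GL_2(\Z_p)\mid b_{12}\equiv 0\pmod{p^k}\}.
\]
The map $u'\mapsto[u'_{11}:u'_{12}]$ induces a bijection $B_k\backslash\GL_2(\Z_p)\xrightarrow{\sim}\Projsp^1(\Z/p^k\Z)$, so the cosets of ``Smith invariant'' $k$ are parametrized by $\Projsp^1(\Z/p^k\Z)$, totalling $p^{k-1}(p+1)$ for $k\ge 1$ (and the single trivial coset for $k=0$). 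Writing $h:=u'\cdot f=\sum_i h_ix^{4-i}y^i$, a direct computation gives $g\circ f=\sum_i p^{k(i-2)}h_ix^{4-i}y^i$, so $g\circ f\in V(\Z_p)$ reduces to $v_p(h_0)\ge 2k$ and $v_p(h_1)\ge k$ (the remaining conditions being automatic).

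The crux is then the ideal membership $\Disc\in(h_0,h_1^2)$ in $\Z[h_0,\dots,h_4]$. I would verify this by observing that when $h_0=0$ the form $h$ factors as $y\cdot \tilde g(x,y)$ with $\tilde g:=h_1x^3+h_2x^2y+h_3xy^2+h_4y^3$, so the product formula $\Disc(pq)=\Disc(p)\Disc(q)\,\mathrm{Res}(p,q)^2$ gives $\Disc(h)|_{h_0=0}=\mathrm{Res}(y,\tilde g)^2\,\Disc(\tilde g)=h_1^2\Disc(\tilde g)$, which is divisible by $h_1^2$. Writing $\Disc=h_0Q_0+R(h_1,\dots,h_4)$ then forces $R=h_1^2Q_1$ and hence $\Disc\in(h_0,h_1^2)$. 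Applied to $h=u'\cdot f$ (whose discriminant equals $\Disc(f)$ up to a $p$-adic unit, as $u'\in\GL_2(\Z_p)$), this gives $v_p(\Disc(f))\ge 2k$, and therefore any contributing coset satisfies $k\le v_p(\Disc(f))/2$.

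Combining the above, the explicit choice
\[
M_p(n):=1+\sum_{k=1}^{\lfloor n/2\rfloor}p^{k-1}(p+1)
\]
satisfies $m_p(f)\le M_p(v_p(\Disc(f)))$ for every $f\in V(\Z_p)$ with $\Disc(f)\ne 0$, and $M_p(0)=M_p(1)=1$. The main obstacle is the ideal membership $\Disc\in(h_0,h_1^2)$, which expresses a higher-order vanishing of the discriminant on the locus of forms with a scheme-theoretic double root at $[1:0]$; the remaining steps are standard manipulations of the Smith normal form and of the $\PGL_2$-action formula.
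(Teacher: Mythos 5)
Your proof is correct and follows essentially the same route as the paper: decompose $\PGL_2(\Q_p)$ into Cartan/Smith double cosets indexed by $k$, observe that integrality of $g\circ f$ forces $p$-divisibility of two coefficients of a $\GL_2(\Z_p)$-translate of $f$, and use the ideal membership of $\Disc$ (your $(h_0,h_1^2)$ is the paper's $(a_4,a_3^2)$ under $x\leftrightarrow y$) to force $2k\leq v_p(\Disc(f))$. You merely add explicit details the paper leaves implicit, namely the count $p^{k-1}(p+1)$ of left cosets in each double coset and the product-formula verification of the ideal membership.
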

\begin{proof}
For $k\in\N$, let
$\mathcal G_k={\rm PGL}_2(\Z_p)
{\footnotesize	\begin{pmatrix}p^k&0\\0&1\end{pmatrix}}
	{\rm PGL}_2(\Z_p)\subset {\rm PGL}_2(\Q_p)$,
where we use the same notation
${\footnotesize	\begin{pmatrix}p^k&0\\0&1\end{pmatrix}}$
for an element in $\gl_2(\Q_p)$ and its image in ${\rm PGL}_2(\Q_p)$.
Then it is well known that
${\rm PGL}_2(\Q_p)=\bigsqcup_{k\in\N}\mathcal G_k$ and that
$|{\rm PGL}_2(\Z_p)\backslash \mathcal G_k|$ is finite for each $k\in\N$.

Let $g\in\mathcal G_k$, $f\in V(\Z_p)$ and suppose $g\cdot f\in V(\Z_p)$.
Write $g=\gamma_1{\footnotesize	\begin{pmatrix}p^k&0\\0&1\end{pmatrix}}\gamma_2$
where $\gamma_1,\gamma_2\in {\rm PGL}_2(\Z_p)$ and $f'=\gamma_2 f\in V(\Z_p)$.
Then ${\footnotesize	\begin{pmatrix}p^k&0\\0&1\end{pmatrix}}f'\in V(\Z_p)$.
Let $f'=(a_0,a_1,a_2,a_3,a_4)$. Then
${\footnotesize	\begin{pmatrix}p^k&0\\0&1\end{pmatrix}}f'=
(a_0p^{2k},a_1p^k,a_2,a_3/p^k,a_4/p^{2k})$
and so $p^{k}\mid a_3$ and $p^{2k}\mid a_4$.
Since the polynomial $\Disc\in\Z[a_0,a_1,a_2,a_3,a_4]$
is in the ideal generated by $a_3^2$ and $a_4$, we have $p^{2k}\mid \Disc(f')$.
Therefore  $p^{2k}\mid \Disc(f)$ as well since $\Disc(f)=\Disc(f')$.

Therefore ${\rm PGL}_2(\Q_p)_f\subset \bigsqcup_{p^{2k}\mid \Disc(f)}\mathcal G_k$, and 
we have
\[
m_p(f)=|{\rm PGL}_2(\Z_p)\backslash {\rm PGL}_2(\Q_p)_f|
\leq\sum_{2k\leq {\rm ord}_p(\Disc(f))}|{\rm PGL}_2(\Z_p)\backslash \mathcal G_k|.
\]
Thus $M_p(l):=\sum_{k\leq l/2}|{\rm PGL}_2(\Z_p)\backslash \mathcal G_k|$
will do.
\end{proof}

We can now deduce Theorem \ref{thm:ap-2selmer}
from Theorem \ref{thm:used_sieve}.
Any $f\in S$ whose $\GL_2(\Z)$-equivalence
class is counted in Theorem \ref{thm:used_sieve}
is locally soluble and irreducible over $\Q$,
thus it corresponds to a non-trivial element in ${\rm Sel}_2(E_f)$.
By Lemma \ref{lem:E_f}, $\Disc(E_f)=\Disc(f)$,
and since $H(f)$ and $H(E_f)$ coincide up to a fixed scalar,
we may replace the condition $H(f) < X$ with $H(E_f) < X$.
Finally, $\Disc(f)/2^{20}$ is an odd squarefree integer,
we have $n(f)\leq m(f)=m_2(f)\leq M_2(20)$, asserting that
the number of $\GL_2(\Z)$-orbits in the ${\rm PGL}_2(\Q)$-equivalence
class of $f$ is absolutely bounded.
We thus obtain Theorem \ref{thm:ap-2selmer}.

\section*{Acknowledgments}
We thank Wei Ho, Zev Klagsbrun and Hiroyuki Ochiai
for helpful comments and suggestions.
We extend our gratitude to Fumihiro Sato for pointing out
an error in our citation of references.
We also thank Tetsushi Ito, whose suggestion provided the starting point
for our proof of Theorem \ref{thm:ff-disc}.

FT was partially supported by the National Science Foundation under Grant No. DMS-2101874, and by grants from the Simons Foundation (Nos. 563234 and 586594).
YI was supported by JSPS KAKENHI Grant Number 20K03747, 21K13773 and 21K18557.
TT was supported by JSPS KAKENHI Grant Number 17H02835, 21K18577 and 22H01115.
We would like to thank all of these agencies for their support.
This work is also supported by the Research Institute for Mathematical Sciences,
an International Joint Usage/Research Center located in Kyoto University.


  \footnotesize

   Ishitsuka: \textsc{Institute of Mathematics for Industry, Kyushu University, Fukuoka, 819-0395, Japan}\par\nopagebreak
   \textit{E-mail address}: \texttt{yishi1093@gmail.com}

  \medskip
    
  Taniguchi: \textsc{Department of Mathematics, Faculty of Science, Kobe University, Kobe, 657-8501, Japan}\par\nopagebreak
  \textit{E-mail address}: \texttt{tani@math.kobe-u.ac.jp}

  \medskip

  Thorne: \textsc{Department of Mathematics, University of South Carolina,
    Columbia, SC 29208}\par\nopagebreak
  \textit{E-mail address}: \texttt{thorne@math.sc.edu}

  \medskip

   Xiao: \textsc{Department of Mathematics and Statistics, University of Northern British Columbia, Prince George, British Columbia, Canada, V2N 4Z9}\par\nopagebreak
   \textit{E-mail address}: \texttt{StanleyYao.Xiao@unbc.ca}

  \medskip

\end{document}